\newtheorem{thm}{Theorem}
\newtheorem{lem}[thm]{Lemma}
\newtheorem{ex}[thm]{Example}
\theoremstyle{definition}
\newtheorem{defn}[thm]{Definition}
\theoremstyle{remark}
\newtheorem{rem}[thm]{Remark}
\newcommand{\U}{\mathbf{u}}
\newcommand{\V}{\mathbf{v}}
\newcommand{\w}{\mathbf{w}}
\newcommand{\N}{\mathcal{N}}
\newcommand{\E}{\mathbb{E}}
\newcommand{\p}{\mathbb{P}}
\newcommand{\im}{\mathrm{Im}\:}
\newcommand{\tr}{\mathrm{Tr}\:}
\numberwithin{equation}{section}
\numberwithin{thm}{section}
\newcommand{\be}{\begin{equation}}
\newcommand{\ee}{\end{equation}}
\newcommand{\al}{\alpha}
\newcommand{\e}{\epsilon}
\newcommand{\wt}{\widetilde}
\begin{document}

\title{A Necessary and Sufficient condition for Edge Universality of Wigner matrices}

\author{Ji Oon Lee}
\address{Department of Mathematical Sciences\\
Korea Advanced Institute of Science and Technology\\
Daejeon, 305701, Republic of Korea}
\email{jioon.lee@kaist.edu}
\thanks{Ji Oon Lee is Partially supported by Basic Science Research Program through the National Research Foundation of Korea Grant  2011-0013474}

\author{Jun Yin}
\address{Department of Mathematics\\
University of Wisconsin-Madison\\
480 Lincoln Dr., Madison\\
WI 53706, USA}
\email{jyin@math.wisc.edu}
\thanks{Jun Yin  is Partially supported by  National Research Foundation of U.S.  Grant  DMS-1001655}


\keywords{Edge universality, Tracy-Widon distribution}

\begin{abstract}

In this paper, we prove a necessary and sufficient condition for Tracy-Widom law of Wigner matrices. Consider $N \times N$ symmetric Wigner matrices $H$ with $H_{ij} = N^{-1/2} x_{ij}$, whose upper right entries $x_{ij}$ $(1\le i< j\le N)$ are $i.i.d.$ random variables with distribution $\mu$ and diagonal entries $x_{ii}$ $(1\le i\le N)$ are $i.i.d.$ random variables with distribution $\wt \mu$. The means of $\mu$ and $\wt \mu$ are zero, the variance of $\mu$ is 1, and the variance of $\wt \mu $ is finite. We prove that Tracy-Widom law holds if and only if $\lim_{s\to \infty}s^4\p(|x_{12}| \ge s)=0$. The same criterion holds for Hermitian Wigner matrices. 
\end{abstract}

 
\maketitle{} \thispagestyle{headings}

\section{Background and Main result}

Since the groundbreaking work by Wigner \cite{W}, it has been conjectured and widely believed that local statics of eigenvalues of random matrices are universal in the sense that it depends only on the symmetric class of the ensembles. The universality is one of the most important concepts in random matrix theory, and it can roughly be divided into two different types, the bulk universality and the edge universality. 

Before considering the edge universality, which we will study in this paper, we roughly introduce some important results on bulk universality. The bulk universality concerns the local statistics of eigenvalues in the interior of the spectrum. In the early works of Wigner, Dyson, Gaudin, and Mehta \cite{M, MG, D1, D2}, it was proved that, after proper rescaling, the joint probability density of eigenvalues of Gaussian Unitary Ensemble(GUE) can be explicitly described by the sine kernel, and they conjectured the universallity holds for more general classes of ensembles. For a very general class of invariant ensembles, the bulk universality was proved by Deift et. al. \cite{DKMVZ1, DKMVZ2}, Bleher and Its \cite{BI}, and Pastur and Shcherbina \cite{PS}. Later by Johansson \cite{J2}, the bulk universality was proved for Gaussian divisible ensembles. (See also the work by Ben Arous and P\'ech\'e \cite{BP}.) For general Wigner matrices, a new approach was introduced to prove the bulk universality in a series of papers by   Erd\H{o}s, Schlein, Yau, and others in \cite{ESY1, ESY2, ESY3, ESY4, ESYY, EYY1, EYY2, EYY, EKYY1, EKYY2}. The bulk university for Wigner matrices was also obtained by Tao and Vu \cite{TV1}. See the reviews \cite{EY, EY2} for further discussion.

The distribution of the largest eigenvalue exhibits another type of universality, which is called the edge universality. Let $\lambda_N$ be the largest eigenvalue of a Wigner matrix. For the Gaussian ensembles, the distribution function of $\lambda_N$ was first identified by Tracy and Widom \cite{TR1, TR2}. More precisely, it is proved that
\begin{equation}
\lim_{N \to \infty} \p (N^{2/3} (\lambda_N - 2) \leq s ) = F_{\beta} (s),
\end{equation}
where the Tracy-Widom distribution functions $F_{\beta}$ can be discribed by Painlev\'e equations, and $\beta = 1, 2, 4$ corresponds to Orthogonal/Unitary/Symplectic ensemble, respectively. The joint distribution of $k$ largest eigenvalues can be expressed in terms of the Airy kernel, which was shown by Forrester \cite{F}. If we denote the $k$ largest eigenvalues by $\lambda_N, \lambda_{N-1}, \cdots, \lambda_{N-k+1}$, then for Gaussian ensembles, the joint distribution function of rescaled eigenvalues has the limit
\begin{equation} \begin{split} \label{eq:TW}
& \lim_{N \to \infty} \p \left( N^{2/3} (\lambda_{N} -2) \leq s_1, N^{2/3} (\lambda_{N-1} -2) \leq s_2, \cdots, N^{2/3} (\lambda_{N-k+1} -2) \leq s_k \right) \\
 = &\;F_{\beta, k} (s_1, s_2, \cdots, s_k),
\end{split} \end{equation}
which will also be called the Tracy-Widom distribution.

The condition for \eqref{eq:TW} has been studied intensively. In the direction of sufficient condition, it has been improved as follows: The result \eqref{eq:TW} was first extended to general Wigner matrices by Soshnikov \cite{S1} with the condition that all odd moments of matrix entries vanish (e.g. the symmetric distribution) and with the Gaussian decay. Ruzmaikina \cite{R} showed that the Gaussian decay can be replaced with polynomial decay faster then $x^{-18}$. Also, under the condition that matrix entries are symmetrically distributed, Khorunzhiy \cite{OK} proved a bound for the spectral norm for the matrices whose entries have finite $12+o(1)$ moment. For the non-symmetric case, \eqref{eq:TW} is proved in \cite{TV2} by Tao and Vu with the condition that matrix entries have vanishing third moment and sub-expontential decay. (Some partial results in the non-symmetric case can be found in \cite{P2} and \cite{P3}.) Later, the vanishing third moment condition was removed by Erd\H{o}s, Yau, and others in \cite{EYY, EKYY2}, i.e., \eqref{eq:TW} is implied by the sub-expontential decay condition. The current best sufficient condition for \eqref{eq:TW}, as we know, is that the matrix entries have finite $12+o(1)$ moments, which was proved in \cite{EKYY2}. Numerical results by Biroli, Bouchaud, and Potters \cite{BBP} predicted that the Tracy-Widom distribution would appear when the $(4 + \epsilon)$-th moment is finite.

On the other hand, for the Wigner matrices whose entries have heavy tails, the necessary condition for \eqref{eq:TW} is studied as follows: In the case of real symmetric matrices with i.i.d. entries, it was proved by  Soshnikov  \cite{S2} that, when the variance of entries diverges, the largest eigenvalue has Poisson statistics. More precisely, in \cite{S2} was considered the case where the distribution of entries satisfies
\begin{equation} \label{eq:tail}
\p (|h_{ij}| > x) = \frac{h(x)}{x^{\alpha}},
\end{equation}
where $h(x)$ is a slowly varying function and $\alpha < 2$. The case $2 \leq \alpha < 4$ was later studied by Auffinger, Ben Arous, and P\'ech\'e \cite{ABP}, which also shows the Poisson statistics. We also remark that in the case $\alpha < 2$ the Wigner semi-circle law no longer holds in the bulk. See the work by Ben Arous and Guionnet \cite{BG} for more detail. The numerical simulation results in \cite{BBP} also suggest that $\alpha = 4$ in \eqref{eq:tail} will provide the marginal case. 

The edge universality has been generalized in many directions, for example, for the sample covariance matrices \cite{J1, Jo, So1, S3, PY1} and for correlation matrix \cite{BPZ, PY2}. For the deformed matrices, which are described as a finite rank perturbation of sample covariance matrices and the deformed Wigner matrices, the Tracy-Widom law also holds when the outliers are excluded \cite{BBaP, BV1, BV2, P, KY}.


In this paper, we prove the following simple criterion on this property: The necessary and sufficient condition for the joint probability density of the $k$ largest eigenvalues of a Wigner matrix (see definition in Def. \ref{def: SWigner}) to weakly converge to that of Gaussian ensembles, i.e., the Tracy-Widom distribution, is that the off-diagonal entry of the Wigner matrix satisfies
\be \label{crit}
\lim_{s\to +\infty} s^4 \p(|x_{12}|\ge s)=0.
\ee
We note that this criterion has not been predicted in any previous works.

The precise definition of the Wigner matrix we consider in this paper is as follows:
\begin{defn} \label{def: SWigner}
The (standard) symmetric (Hermitian) Wigner matrix of size $N$ is a symmetric (Hermitian) matrix 
$$
(H_N)_{ij}=h_{ij}=\frac{1}{\sqrt N} x_{ij},\quad  1\le i, j\leq N,
$$
where the upper-triangle entries $(x_{ij})$ $(1\le i\le j\leq N )$ are independent real (complex) random variables with mean zero satisfying the following conditions:
\begin{itemize}
\item The upper right entries $x_{ij}$ $(1\le i< j\le N)$ are $i.i.d.$ random variables with distribution $\mu$, satisfying $\E x_{12}=0$ 
and $\E|x_{12}|^2=1$. 
\item The diagonal entries $x_{ii}$ $(1\le i\le N) $ are $i.i.d.$ random variables with distribution $\wt \mu$, satisfying $\E x_{11}=0$ 
and $\E|x_{11}|^2 < \infty$.
\item In addition, for the Hermitian case, $\E (x_{12})^2=0$.
\end{itemize}

When the random variables $x_{ij}$ and $x_{ii}$ are real Gaussian with $\E|x_{ii}|^2 = 2$, $H$ will be called Gaussian Orthogonal Ensemble (GOE). Similarly, when $x_{ij}$ are complex Gaussian and $x_{ii}$ are real Gaussian with $\E|x_{ii}|^2 = 1$, $H$ will be called Gaussian Unitary Ensemble (GUE). We denote by $\lambda_1 \leq \lambda_2 \cdots \leq \lambda_N$ the eigenvalues of $H_N$ and by ${\bf u}_1, {\bf u}_2 \cdots {\bf u}_N$ the corresponding eigenvectors of $H_N$. 
\end{defn}

The main result of this paper is the following theorem:
\begin{thm} \label{main result}
For any centered distribution $\nu$ and $\wt{\nu}$ with variance $1$ and finite variance, respectively, let $H_N$ be the Wigner matrix defined in Definition \ref{def: SWigner} such that $x_{12}$ and $x_{11}$ have distributions $\nu$ and $\wt{\nu}$, respectively. Then,
\begin{itemize}
\item Sufficient condition: if \eqref{crit} holds, then for any fixed $k$, the joint distribution function of $k$ rescaled largest eigenvalues,
\be\label{def: ranvec}
\p \left( N^{2/3} (\lambda_{N} -2) \leq s_1, N^{2/3} (\lambda_{N-1} -2) \leq s_2, \cdots, N^{2/3} (\lambda_{N-k+1} -2) \leq s_k \right)
\ee
has a limit as $N \to \infty$, which coincides with that in the GUE (GOE) case, i.e., it weakly converges to the Tracy-Widom distribution. 

\item Necessary condition: if \eqref{crit} does not hold, then the joint distribution function \eqref{def: ranvec} does not converge to the Tracy-Widom distribution. Furthermore, we have 
\be \label{nec}
\limsup_{N\to \infty} \, \p(\lambda_N( H_N ) \ge 3) > 0.
\ee
\end{itemize}
\end{thm}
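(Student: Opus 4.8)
The two directions require quite different arguments; I would dispatch the necessity first, as it is essentially elementary.

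\textbf{Necessity.} Suppose \eqref{crit} fails, so $c:=\limsup_{s\to\infty}s^4\p(|x_{12}|\ge s)$ is positive (possibly $+\infty$). For any $a\ge1$, substituting $s\mapsto s/a$ into a sequence realizing the $\limsup$ gives $\limsup_s s^4\p(|x_{12}|\ge as)\ge a^{-4}c>0$; taking $a=4$, there is a sequence $s_m\uparrow\infty$ with $s_m^4\p(|x_{12}|\ge 4s_m)\ge c/512$ for all large $m$. Set $N_m:=\lfloor s_m^2\rfloor$, so $N_m^{1/2}\le s_m$ and $\p(|x_{12}|\ge 4N_m^{1/2})\ge c'/N_m^2$ for some $c'>0$ and $m$ large. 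The $\binom{N_m}{2}$ off-diagonal entries are i.i.d., so the probability that some pair $(i,j)$ has $|h_{ij}|\ge 4$ is at least $1-(1-c'/N_m^2)^{\binom{N_m}{2}}$, which stays bounded below by a positive constant along $m$. On that event, pick such a pair $(I,J)$ and a unit vector $\V$ supported on $\{I,J\}$ with suitably chosen phases; then
\[
\lambda_N(H_N)\ \ge\ \V^{*}H_N\V\ =\ |h_{IJ}|+\tfrac12\big(h_{II}+h_{JJ}\big)\ \ge\ 4-\max_k|h_{kk}|.
\]
Since the diagonal entries have finite variance, $\max_k|h_{kk}|=N^{-1/2}\max_k|x_{kk}|\to0$ in probability (Chebyshev plus dominated convergence). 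Thus $\p(\lambda_{N_m}(H_{N_m})\ge 3)$ stays bounded below, proving \eqref{nec}; and \eqref{nec} contradicts the convergence \eqref{eq:TW}, which forces $\lambda_N\to2$ in probability. Hence \eqref{def: ranvec} cannot converge to the Tracy--Widom distribution.

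\textbf{Sufficiency: the plan.} Assume \eqref{crit}. First reduce to a bounded ensemble: let $\hat H$ have entries $\hat h_{ij}=N^{-1/2}x_{ij}\mathbf 1(|x_{ij}|\le N^{1/2})$. By \eqref{crit} and a union bound, $\p(\hat H\ne H_N)\le N^2\p(|x_{12}|>N^{1/2})=o(1)$, so it suffices to treat $\hat H$; and integrating the tail in \eqref{crit} gives $|\E\hat x_{12}|=o(N^{-3/2})$ and $1-\mathrm{Var}(\hat x_{12})=o(N^{-1})$, so after recentring we have a Wigner matrix $\wt H$ with mean-zero entries of variance $1-o(N^{-1})$, which moves the edge by only $o(N^{-1})$ and so does not affect the rescaled extreme eigenvalues. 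The entries of $\wt H$ are bounded by $CN^{1/2}$, obey $\p(|\wt x_{12}|>s)=o(s^{-4})$, and (again by tail integration) satisfy $\E|\wt x_{12}|^{4}=o(\log N)$ and $\E|\wt x_{12}|^{p}=o(N^{(p-4)/2})$ for every $p>4$; in normalized form, $\E|\wt h_{12}|^{4}=o(N^{-2}\log N)$ and $\E|\wt h_{12}|^{p}$ decays only like $N^{-2}$, not $N^{-p/2}$, for $p>4$. With this input I would: (a) prove the local semicircle law for $\wt H$ up to the spectral edge at scale $N^{-1+\e}$; (b) deduce rigidity of the extreme eigenvalues, in particular $\p(\|\wt H\|\le 2+N^{-2/3+\e})\to1$; and (c) run a Green-function comparison at the edge against GOE/GUE --- equivalently, a short Dyson-Brownian-motion interpolation that relaxes the edge to Gaussian on scale $N^{-2/3+\e}$ --- to transfer the joint law of the top $k$ eigenvalues and obtain the Tracy--Widom limit. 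Step (c) uses no information about the third or fourth moments beyond crude polynomial bounds, so it is unaffected by the logarithmic defect in $\E|\wt x_{12}|^4$.

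\textbf{Main obstacle.} The technical heart is steps (a)--(b). The existing local laws and edge-universality results assume the entries are of size $N^{-1/2+o(1)}$ with high probability and have uniformly bounded (currently $12+o(1)$) moments; here the truncated entries $\wt h_{ij}$ can be as large as $N^{-1/4}$ on an atypical but non-negligible set, and $\E|\wt h_{12}|^{p}$ is polynomially larger than in the Gaussian case. Pushing the local law down to the exact threshold \eqref{crit} therefore seems to require isolating these ``medium'' entries by a multiscale decomposition, $\wt H = A + B$ with $A$ genuinely of Wigner size and $B$ a sparse correction, running the self-consistent argument for $A$ and showing that $B$ does not disturb the stability of the self-consistent equation near the edge --- which is precisely where that stability is weakest. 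This is where I expect the bulk of the effort to lie; once the local law and edge rigidity for $\wt H$ are secured, step (c) is routine.
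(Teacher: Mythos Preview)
Your necessity argument is correct and is essentially the paper's: pick a subsequence along which the tail is at least $cN^{-2}$, get a large off-diagonal entry with positive probability, and use the two-coordinate test vector to push $\lambda_N\ge 3$. The only cosmetic difference is that you control the diagonals via $\max_k|h_{kk}|\to0$ in probability, while the paper fixes the offending pair $(I,J)$ and applies Markov to $h_{II},h_{JJ}$; either works.

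For sufficiency, your plan diverges from the paper at the very first step, and the divergence is what makes your ``main obstacle'' hard. You truncate at $N^{1/2}$ so that $\hat H=H$ with probability $1-o(1)$, and are then left trying to prove a local law for a matrix whose entries can be as large as $O(1)$. The paper instead truncates at $N^{1/2-\epsilon}$: the resulting $H^S$ has bounded support with $q=N^{\epsilon}$, and---crucially---its third moment is $O(1)$ and fourth moment is $O(\log N)$, so one can set up a four-moment-matching comparison with a matrix of genuinely small support $q\sim N^{1/2}/\log N$. The price is that $H^S\ne H$ on a non-negligible set; but under \eqref{crit} the leftover piece is, with probability $1-o(1)$, a perturbation $E$ of rank at most $N^{O(\epsilon)}$ with $\|E\|\le 3/4$. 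The paper then handles $E$ not through the self-consistent equation at all, but via finite-rank perturbation theory for deformed Wigner matrices: the eigenvalues of $H^S+E$ outside the spectrum of $H^S$ are zeros of $\det(V^*G^S(\mu)V+D^{-1})$, and the required control of $G^S_{ij}$ at the edge comes from a separate weak $L^2$ bound (Lemma~\ref{lem: WBGij}).

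The second ingredient you are missing is how the paper gets rigidity for $H^S$ with $q=N^{\epsilon}$. This is not the usual local-law argument, nor your proposed ``$B$ does not disturb stability''; it is an \emph{iterated} Green-function comparison (Lemmas~\ref{lem:p moment} and~\ref{lem:green}). One compares $|m-m_{sc}|^p$ for $H^S$ against the four-moment-matched good matrix $\wt H$; each Lindeberg swap produces error terms that are themselves functionals of Green-function products, and instead of bounding them directly one re-applies the comparison to those error terms, gaining a factor $N^{-\epsilon}$ at each pass. After $O(\epsilon^{-1})$ passes the errors are negligible and rigidity for $\wt H$ transfers to $H^S$. Your step~(c) invokes Green-function comparison only at the end, to pass from $\wt H$ to GOE; the paper uses it twice, and the first use---bootstrapping rigidity for small $q$---is the real novelty. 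Without it, your ``multiscale'' sketch has no mechanism for proving the local law when entries are only bounded by $N^{-\epsilon}$.
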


\begin{rem} \label{example}
While any distribution with finite fourth moment satisfies the criterion \eqref{crit}, the converse is not true. If we consider, for example, the distribution whose density $f(x)$ decays as $|x|^{-5} \log |x|$, then it does not have finite fourth moment though \eqref{crit} holds for it. The existence of this particular example, however, does not contradict the result in \cite{BY}, which proved that $\lim_{N \to \infty} \lambda_N(H_N) =2$ a.s. if and only if the fourth moment exists.
\end{rem}

Our result provides a very simple sufficient and necessary condition for the edge universality of Wigner matrices without assuming any other  properties of matrix entries. This also shows the existence of four moments, which was predicted to be needed for the edge universality, is not necessary for the Tracy-Widom result, as we can see from Remark \ref{example}.

Our proof of the main result features two key observations.

1. If we introduce a `cutoff' on each matrix element at $N^{-\epsilon}$, then the matrix with the cutoff can well approximate the original matrix in terms of the behavior of the largest eigenvalue if and only if the criterion \eqref{crit} holds.

2. The Green function comparison method (e.g. Theorem 6.3 in \cite{EYY}), which was first introduced in \cite{EYY1}, can be extended to the random matrices whose entries have a bounded support of size $N^{-\epsilon}$ for some $\epsilon > 0$. The Green function comparison method was applied on studying the distribution of the eigenvalues of the Wigner matrices, deformed Wigner matrices, covariance matrices, correlation matrices, and adjacency matrices of random graphs \cite{EYY1, EYY2, EYY, KY, PY1, PY2, EKYY1, EKYY2}. It was also used in the study of the distribution of eigenvectors \cite {KY0} and the determinant \cite{TV3} of Wigner matrices. We believe that our new method in the present paper can be used to improve the results in these topics.

The first observation can be understood in the framework of the deformed Wigner matrix. We consider the matrix with the cutoff as the unperturbed part and the remaining part the perturbation. As studied in \cite{BBaP, P, BV1, BV2, KY}, if the perturbation is small enough, then we can predict the behaviors of the largest eigenvalues of the original matrix from the matrix with cutoff. On the other hand, if the original/perturbation matrix has an entry whose absolute value is larger than $1$, then the matrix will have an eigenvalue greater than $2$, hence the Tracy-Widom distribution fails. Roughly speaking, the criterion \eqref{crit} means that each off-diagonal entry is bounded by $1$ with probability $1 - o(N^{-2})$, thus the condition gaurantees that no entries are larger than $1$ with probability $1 - o(1)$. We remark that a similar argument was introduced in \cite{BBP}.

The Green function comparison part is more technical. Given the matrix with the cutoff at $N^{-\epsilon}$, we first find a `better' matrix, in the sense that it is already known to satisfy Tracy-Widom law, whose first four moments coincide those of the given matrix. We then apply Lindeberg replacement stretagy sufficiently many times (more precisely, $O(\e^{-1})$ times) to compare the Green functions. The basic idea is as follows: Using Green function comparison method, one can study the difference of the functional on Green functions between the `better' matrix and the original matrix. Instead of bounding the difference directly, however, we represent it as a new functional, which is much more complicated, on Green functions, with gaining a factor $N^{-\e}$. This new functional can be easily bounded for the `better' matrix case, but not for original matrix. To solve this issue, again we use Green function comparison method to estimate the difference of this new functional between the `better' matrix and original matrix. Repeating this process, we obtain the desired bound. The details will be explained later.

Though the Green function comparison method has been used in previous papers, it was always required to have a good bound on Green function with high probability. This is one of the reasons that the distribution of matrix entries have been assumed to satisfy subexponential decay condition  in many papers. In this paper, however, we show a way to circumvent this problem, which can be used to achieve many other results, besides the edge universality, for heavy-tailed random matrices. See, for example, the rigidity result in Theorem \ref{thm: new rigidity} that holds for the random matrices whose entries are only bounded by $N^{-\epsilon}$ for some $\epsilon > 0$. {(Note: it is also an interesting result, since it shows that the locations and the fluctuations of the eigenvalues keep unchanged, even if the fluctuations of the matrix entries become very large, i.e., from $N^{-1/2}$ to $N^{-\e}$. )}

This paper is organized as follows. In Sections \ref{sec2} and \ref{sec3}, we introduce the notations and collect tools we use to prove the main result. In Section \ref{sec4}, we prove the main result using the cutoff argument. Technical results on the Green function comparison method will be proved in Sections \ref{sec6} and \ref{sec7}.

\begin{rem}
In this paper, for simplicity, we will prove Theorem \ref{main result} only for the real symmetric case with $k=1$. The general case can be proved analogously. 
\end{rem}

\section{Notations} \label{sec2}
 
In the proof, we will use some variations of standard Wigner matrix defined in Definition \ref{def: SWigner}, which are defined as follows:

\begin{defn}[Generalized symmetric Wigner matrix]
A symmetric matrix $H_N$ is said to be a generalized symmetric Wigner matrix of size $N$ if its upper-triangular entries
$$
(H_N)_{ij} = h_{ij}, \quad  1 \leq i \leq j\leq N,
$$ 
are independent real random variables with mean zero, {\it whose distribution may depend on $i,j$ and $N$}, and satisfy, for some constant $C_0$,
\begin{equation} \label{eq:diag}
|\E h_{ij}^2 - N^{-1} |\le C_0 N^{-1} \delta_{ij}.
\end{equation}

\end{defn}

\begin{rem}
The results on generalized Wigner matrices, especially the constants in the results, may depend on $C_0$, but we will not emphasize it in the sequel.
\end{rem}

As in \cite{EKYY1, EKYY2}, we will use the following definition to characterize events of very high probability.

\begin{defn}[High probability events]\label{def: hprob}
Define
\be
\varphi := (\log N)^{\log \log N}.
\ee
We say that an $N$-dependent event $\Omega$ holds with $\zeta$-high probability if there exist constants $c, C>0$, independent of $N$, such that
\begin{equation}
\p (\Omega ) \geq 1- N^C e^{-c \, \varphi^\zeta }
\end{equation}
for all sufficiently large $N$. For simplicity, for the case $\zeta=1$, we just say high probability. 
\end{defn}

The next condition on the distributions of the matrix entries will be used in the proof.
 
 \begin{defn}[Bounded support condition] \label{cond:bdd}
We say a family of random matrices $(H_N)_{ij} = (h_{ij})$ satisfies the bounded support condition with $q$, if for $1 \leq i, j \leq N$
\begin{equation} \label{eq:cond_bdd}
|h_{ij}| \leq q^{-1}
\end{equation}
with probability larger than $1 - e^{-N^c}$ for some $c > 0$. Here, $q$ may depend on $N$ and usually $N^{\phi} \leq q \leq N^{1/2} (\log N)^{-1}$ for some $\phi > 0$.
\end{defn}

Note that the Gaussian distribution satisfies bounded support condition with any $q < N^{\phi}$ for any $\phi < 1/2$. We also remark that, when $H_N$ satisfies the bounded support condition, the event $\{ |h_{ij}| \leq q^{-1} \}$ holds with `very' high probability, i.e., it holds with $\zeta$-high probability for any positive constant $\zeta$. For this reason, the extreme event $\{ |h_{ij}| \geq q^{-1} \}$ is negligible, and throughout the paper, we will not consider the case it happens.


  
\begin{defn}[Green function, semicircle, $m_{sc}$ and $m$]
For a Wigner matrix $H$, we define the Green function of $H$ by 
\be \label{green}
G_{ij}(z) := \left(\frac1{H-z}\right)_{ij}, \quad z=E+i\eta, \quad E \in \mathbb R, \quad \eta>0.
\ee
The Stieltjes transform of the empirical eigenvalue distribution of $H$ is given by  
\be \label{mNdef}
m(z) = m_N (z): = \frac{1}{N} \sum_j G_{jj}(z) = \frac{1}{N} \tr\, \frac{1}{H-z}\,.
\ee
Define $m_{sc} (z)$ as the unique solution of
\be\label{defmsc} 
m_{sc} (z) + \frac{1}{z+m_{sc} (z)} = 0,
\ee
with positive imaginary part for all $z$ with $\im z > 0$, i.e.,
\be\label{temp2.8}
m_{sc}(z)=\frac{-z+\sqrt{z^2-4}}{2},
\ee
where the square root function is chosen with a branch cut in the segment $[-2,2]$ so that asymptotically $\sqrt{z^2-4}\sim z$ at infinity.
This guarantees that the imaginary part of $m_{sc}$ is non-negative for $\eta=\im z > 0$ and in the $\eta\to 0$ limit it is the Wigner semicircle distribution
\be \label{def:sc}
\varrho_{sc}(E) : = \lim_{\eta\to 0+0}\frac{1}{\pi}\im \, m_{sc}(E+i\eta) = \frac{1}{2\pi} \sqrt{ (4-E^2)_+}.
\ee
We will also frequently use the notations
$$
z : = E + i \eta, \quad \kappa: = \big| \, |E|-2 \big|.
$$
\end{defn}

The following lemma (Lemma 4.2 of \cite{EYY2}) collects elementary properties of the Stieljes transform of the semicircle law. As a technical note, we use the notation $f \sim g$ for two positive functions in some domain $D$ if there exists a positive universal constant $C$ such that $C^{-1}\le f(z)/g(z) \le C$ holds for all $z\in D$.

\begin{lem} \label{lm:msc}  
We have for all $z$ with $\im z>0$ that
\be \label{zmsc2}
|m_{sc}(z)| = |m_{sc}(z)+z|^{-1} \le 1.
\ee
Let $z=E+i\eta$ with $|E|\le 5$ and $0< \eta \le 10$. We have
\be \label{smallz}
|m_{sc}(z)|\sim 1, \qquad   |1-m_{sc}^2(z)|\sim \sqrt{\kappa+\eta}
\ee
and the following two bounds: 
\begin{align} \label{esmallfake}
\im m_{sc} (z) \sim
	\begin{cases}
	\displaystyle \frac{\eta}{\sqrt{\kappa+\eta}} & \mbox{if  $\kappa\ge\eta$ and $|E|\ge 2$}, \\
	 & \\
	\sqrt{\kappa+\eta} & \mbox{if $\kappa\le \eta$ or $|E|\le 2$.}
	\end{cases}
\end{align}
\end{lem}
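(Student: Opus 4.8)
The plan is to derive all three statements from two ingredients: the self-consistent equation $m_{sc}(z)\bigl(z+m_{sc}(z)\bigr)=-1$ together with the Herglotz property of $m_{sc}$ (it maps $\{\im z>0\}$ into itself, being the Stieltjes transform of $\varrho_{sc}$), and the explicit formula \eqref{temp2.8}. The only quantitative content is the regime with $z$ near $\pm 2$ (i.e.\ $\kappa+\eta$ small); in the complementary part of the domain $|E|\le 5$, $0<\eta\le 10$ the bounds in \eqref{smallz} are immediate, and those in \eqref{esmallfake} follow from the same formulas together with the crude a priori bounds $\im m_{sc}\le|m_{sc}|\le 1$ and $\im m_{sc}\ge c\eta$ (see below).

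First, \eqref{zmsc2} and the first bound of \eqref{smallz}. The equality in \eqref{zmsc2} is just $m_{sc}=-(z+m_{sc})^{-1}$, which is \eqref{defmsc}. For $|m_{sc}|\le 1$: for $\im z>0$ one has $\im m_{sc}>0$, hence $\im(z+m_{sc})>0$, so writing $m_{sc}=re^{i\theta}$ and $z+m_{sc}=se^{i\phi}$ with $r,s>0$, $\theta,\phi\in(0,\pi)$, the relation $m_{sc}(z+m_{sc})=-1$ forces $\theta+\phi=\pi$ and $rs=1$; then $\eta=\im\bigl((z+m_{sc})-m_{sc}\bigr)=(s-r)\sin\theta>0$ forces $s>r$, i.e.\ $|m_{sc}|=r<1$. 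On the domain, $|m_{sc}|=|z+m_{sc}|^{-1}\ge(|z|+1)^{-1}$, which is bounded below since $|z|$ is bounded, so $|m_{sc}|\sim 1$. For the second bound of \eqref{smallz}, one checks from \eqref{temp2.8} the identity $1-m_{sc}^2(z)=-m_{sc}(z)\sqrt{z^2-4}$ (using $z-\sqrt{z^2-4}=-2m_{sc}$ and $1-m_{sc}^2=\tfrac12\sqrt{z^2-4}\,(z-\sqrt{z^2-4})$), whence $|1-m_{sc}^2|=|m_{sc}|\,|z^2-4|^{1/2}\sim|z-2|^{1/2}|z+2|^{1/2}$; since for real $E$ the smaller of $|z-2|,|z+2|$ equals $\sqrt{\kappa^2+\eta^2}$ and the larger lies in $[2,|E|+2+\eta]$, on the domain $|z^2-4|\sim\kappa+\eta$, which gives the claim.

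For \eqref{esmallfake} I would first reduce $\im m_{sc}$ to $\im\sqrt{z^2-4}$. From $m_{sc}=-(z+m_{sc})^{-1}$ one gets $\im m_{sc}=(\eta+\im m_{sc})/|z+m_{sc}|^2$, which via $|z+m_{sc}|=|m_{sc}|^{-1}$ becomes
\be
\im m_{sc}(z)=\frac{\eta\,|m_{sc}(z)|^2}{1-|m_{sc}(z)|^2},
\ee
so in particular $\im m_{sc}\ge\eta|m_{sc}|^2\ge c\eta$ on the domain. On the other hand \eqref{temp2.8} gives $\im m_{sc}=\tfrac12\bigl(\im\sqrt{z^2-4}-\eta\bigr)$, i.e.\ $\im\sqrt{z^2-4}=\eta+2\im m_{sc}$, and combined with $\eta\le c^{-1}\im m_{sc}$ this yields $\im m_{sc}\sim\im\sqrt{z^2-4}=:\zeta_2$. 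Now $\zeta_2^2=\tfrac12\bigl(|z^2-4|-\re(z^2-4)\bigr)=\tfrac12\bigl(|z^2-4|+4+\eta^2-E^2\bigr)$, and I would split into the two cases of \eqref{esmallfake}. If $|E|\le 2$ or $\kappa\le\eta$: when $4-E^2\ge 0$ one has $\zeta_2^2\ge\tfrac12|z^2-4|$ and $\zeta_2^2\le\tfrac12(|z^2-4|+4\kappa+\eta^2)$, while when $\kappa\le\eta$ one uses the algebraic identity
\be
|z^2-4|-|E^2-4|=\frac{\eta^2\,(2E^2+8+\eta^2)}{|z^2-4|+|E^2-4|};
\ee
either way $\zeta_2^2\sim\kappa+\eta$, hence $\im m_{sc}\sim\sqrt{\kappa+\eta}$. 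If $|E|>2$ and $\kappa\ge\eta$ (so $\kappa+\eta\sim\kappa\le 3$): since $4+\eta^2-E^2=\eta^2-|E^2-4|$ and $|E^2-4|=\kappa(|E|+2)$, the identity above gives $|z^2-4|-|E^2-4|\sim\eta^2/\kappa$, hence $\zeta_2^2\sim\eta^2/\kappa$ and $\im m_{sc}\sim\eta/\sqrt{\kappa}\sim\eta/\sqrt{\kappa+\eta}$.

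The step requiring care is the last one: getting matching upper and lower bounds for $\im m_{sc}$ uniformly in $z$, in particular across the transition $\kappa\sim\eta$, near $E=\pm 2$, and when $\eta$ is of order $1$ (where one invokes $c\eta\le\im m_{sc}\le 1$ together with the fact that $\sqrt{\kappa+\eta}$, resp.\ $\eta/\sqrt{\kappa+\eta}$, is then bounded above and below). Once \eqref{smallz} is in hand, everything remaining is elementary manipulation of \eqref{temp2.8}.
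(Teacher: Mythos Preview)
Your argument is correct. Note, however, that the paper itself does not prove this lemma: it merely records it as Lemma~4.2 of \cite{EYY2}, so there is no in-paper proof to compare against. What you have written is a clean, self-contained derivation directly from the defining relation $m_{sc}(z+m_{sc})=-1$ and the explicit formula \eqref{temp2.8}; the key identities $1-m_{sc}^2=-m_{sc}\sqrt{z^2-4}$, $\im m_{sc}=\eta|m_{sc}|^2/(1-|m_{sc}|^2)$, and the algebraic computation of $(\im\sqrt{z^2-4})^2$ are all verified correctly, and the case split in \eqref{esmallfake} is handled with matching upper and lower bounds. One cosmetic remark: your treatment of the ``$\eta$ of order $1$'' regime at the end is in fact already covered by your main computation (all estimates are uniform for $0<\eta\le10$, since you only use $\eta^2\le10\eta$ and $\kappa\le3$), so that paragraph is a sanity check rather than a separate case.
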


\begin{defn}[Classical location of the eigenvalue]
We denote by $\gamma_j$ the {\it classical location} of the $j$-th eigenvalue, i.e., $\gamma_j$ is defined by 
\be \label{def:gamma}
N \int_{-\infty}^{\gamma_j} \varrho_{sc}(x) {\rm d} x = j, \qquad 1\leq j\le N.
\ee
\end{defn}

\begin{rem}
Throughout the paper, the notations $O( \cdot )$, $o( \cdot )$, and $ \ll$ will always be with respect to the limit $N \to \infty$, where $a \ll b$ means $a = o(b)$. The constant $C$ will denote various constants independent of $N$.
\end{rem}

\section{Tools} \label{sec3}
In this section, we introduce some results that will be used in the proof of the main theorem. Some of them are already proved in previous papers with H.-T. Yau, L. Erd\H{o}s, and A. Knowles, and we made slight changes in the statement to fit the notations and definitions in this paper. We also extend some of the known results.
 
Define the domain
\be \label{defS}
{\bf  S}(C) = \Big\{ z=E+i\eta\; : \; |E|\leq 5, \quad N^{-1}\varphi^C < \eta \le 10 \Big\}.
\ee

\begin{lem}[Previous results on generalized Wigner matrix] \label{lem: old}
Let $H$ be a generalized Wigner matrix satisfying  bounded support condition with $q$. There exists a constant $C>0$ such that, if $q\ge\varphi^C$, 
then the following properties hold with 3-high probability:

\begin{enumerate}
\item {\bf Local semicircle law} (Theorem 2.8 in \cite{EKYY1}) : 
\begin{equation} \label{scm}
\bigcup_{z\in {\bf  S }(C) } \Big\{ |m(z)-m_{sc}(z)| 
\leq \varphi^C 
\left( 
\min \left\{ 
\frac{1}{\sqrt{\kappa +\eta}}\frac{1}{q^2}, \frac{1}{q}
\right\} + \frac{1}{N \eta}
\right)
\Big\}
\end{equation} 
 \begin{align}\label{Gij estimate}
\bigcup_{z\in {\bf  S }(C)  }
 \left\{
 \max_{ij} |G_{ij}(z)-\delta_{ij}m_{sc}(z)| \le
\varphi^C
\left(
\frac{1}{q} + \sqrt{\frac{\im m_{sc}(z)}{N \eta}} + \frac{1}{N \eta} 
\right)
\right\}
\end{align}

\item {\bf Bound on $\|H\|$} (Lemma 4.4 in \cite{EKYY1}) :

\be\label{sjsy}
\|H\|\leq 2+\varphi^C(q^{-2}+N^{-2/3})
\ee

\item {\bf Delocalization} (Remark 2.18 in \cite{EKYY1}) :
 
\be\label{deloc}
\max_{\alpha, i} \left|{\bf u}_\al(i)\right|^2\leq \frac{\varphi^C}{N}
\ee

\end{enumerate}

Furthermore, if  $q\ge N^{\phi}$ for some constant $\phi > 1/3$, then the following properties hold with 3-high probability:

\begin{enumerate} \setcounter{enumi}{3}
\item {\bf Rigidity of the eigenvalues}  (Theorem 2.13 and Remarks 2.14, 2.15 in \cite{EKYY1}) :
\be
\bigcup_j\bigg\{|\lambda_j-\gamma_j| 
{ \,\leq\,} \varphi^C \left( \Big [ \min \big ( \, j ,  N-j+1 \,  \big) \Big  ]^{-1/3} N^{-2/3} +q^{-2}\right)\bigg\} .
\ee

\item {\bf Bound on $G_{ij}$ out of Spectrum} (Equations (3.32), (3.58) and (4.36)-(4.46) in \cite{EKYY1}) : For any large $a>12 $, there exists a constant $C>0$ depending on $a$ such that for $z=E+i\eta$ with
\be\label{3737}
2+\varphi^CN^{-2/3}\leq E\leq 3, \quad \eta=\varphi^{(3+a)} N^{-1}\kappa^{-1/2},
\ee
we have 
\be\label{3838}
|m(z)-m_{sc}(z)|\leq \frac{1}{\varphi^aN\eta }, \quad \im m(z)\leq \frac{1}{\varphi^aN\eta },  \quad \max_{i\ne j}|G_{ij}|\leq \frac{1}{\varphi^{a/3} N \eta}
\ee

\end{enumerate}

\end{lem}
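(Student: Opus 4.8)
The plan is to deduce all five items from statements already established for generalized Wigner matrices in \cite{EKYY1}, the key point being that the matrix $H$ considered here is an especially simple instance of the model treated there: its variance profile $s_{ij}=\E h_{ij}^2$ equals $N^{-1}$ for every off-diagonal pair by \eqref{eq:diag}, and lies in $[(1-C_0)N^{-1},(1+C_0)N^{-1}]$ on the diagonal, so it satisfies both the flatness bound $cN^{-1}\le s_{ij}\le CN^{-1}$ and the row-sum normalization $\sum_i s_{ij}=1+O(N^{-1})$ used in \cite{EKYY1}. What remains is therefore only to (a) match the support hypothesis, (b) match the notion of high probability, and (c) rewrite the error bounds in the $\varphi^C$ notation of the present paper.

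First I would reconcile the hypotheses. By Definition \ref{cond:bdd} we have $|h_{ij}|\le q^{-1}$ off an event of probability at most $e^{-N^c}$; since this event stays negligible even after a union bound over the $O(N^2)$ entries, one may condition each $h_{ij}$ on $\{|h_{ij}|\le q^{-1}\}$ without altering any of the conclusions (this is the reduction already announced after Definition \ref{cond:bdd}). The conditioned matrix has deterministically bounded entries and variances still of the form $N^{-1}+O(N^{-2})$ off the diagonal, which is exactly the input required in \cite{EKYY1}, with their control parameter in the role of $q$; the assumption $q\ge\varphi^C$ is what places $q^{-1}$ below every threshold appearing in their arguments. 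The qualifier ``$3$-high probability'' is obtained by choosing the free exponent in the large-deviation estimate of \cite{EKYY1} large enough and then inflating the constant $C$ in $\varphi^C$ to absorb the resulting powers of $\log N$. With these identifications, items (1)--(3) are, respectively, Theorem 2.8, Lemma 4.4 and Remark 2.18 of \cite{EKYY1}: the two terms inside the minimum in \eqref{scm} are the weak bound valid on all of ${\bf S}(C)$ and the stronger bound valid away from the edge, while \eqref{Gij estimate} is the companion entrywise law. Uniformity in $z\in{\bf S}(C)$, if not already in the cited form, follows from a net in $z$ of polynomial cardinality together with the deterministic bound $\|\partial_z G(z)\|\le\eta^{-2}$, at the cost of one more factor absorbed into $\varphi^C$.

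For items (4) and (5) one uses in addition $q\ge N^{\phi}$ with $\phi>1/3$. This is precisely what makes $q^{-2}\le N^{-2/3-\delta}$ for some $\delta>0$, so that the $q^{-2}$ error in the rigidity estimate is dominated by the intrinsic edge scale $N^{-2/3}$ (up to the displayed $\varphi^C$); granting this, item (4) is Theorem 2.13 together with Remarks 2.14--2.15 of \cite{EKYY1}, and item (5) is the out-of-spectrum analysis of equations (3.32), (3.58) and (4.36)--(4.46) there, specialized to the $z$ of \eqref{3737}. I expect the genuinely delicate point to be this last one: verifying that the off-spectrum bounds \eqref{3838} are available at the precise parameter $\eta=\varphi^{3+a}N^{-1}\kappa^{-1/2}$ with the stated dependence on $a$. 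This amounts to checking that the self-consistent analysis of $m-m_{sc}$ used in \cite{EKYY1} to continue the local law slightly outside $[-2,2]$ still closes for our value of $q$ — the extra smallness being that $\im m_{sc}\sim\eta\kappa^{-1/2}$ is tiny on that contour — and that the resulting gain is at least $\varphi^{-a}$ in the first two bounds of \eqref{3838} and $\varphi^{-a/3}$ in the entrywise bound. Everything else is bookkeeping: transcribing constants, enlarging $C$, and the standard net argument for uniformity in $z$.
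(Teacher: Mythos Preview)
Your proposal is correct and follows essentially the same approach as the paper: items (1)--(4) are direct citations from \cite{EKYY1} after matching the variance profile and the high-probability notion (the paper phrases this as choosing $\xi=3\log\log N$ so that $(\log N)^\xi=\varphi^3$, and notes that the diagonal perturbation $C_0=O(1)$ is harmless), while item (5) is exactly the out-of-spectrum analysis (3.32), (3.58), (4.36)--(4.46) of \cite{EKYY1} with the free $\log N$ factors there upgraded to $\varphi^a$. The only point where the paper is more explicit than you is the mechanism for the third bound in \eqref{3838}: one first gets $\max_{i\ne j}|G_{ij}|\le O(q^{-1})+(\varphi^{(a/2)-2}N\eta)^{-1}$ from (3.32), (3.58) of \cite{EKYY1}, and then absorbs the $q^{-1}$ term using $\eta\ge\varphi^{O(1)}N^{-2/3}$ together with $q\ge N^\phi$, $\phi>1/3$.
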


\begin{proof}[Proof of Lemma \ref{lem: old}] For the case $C_0=0$ (see \eqref{eq:diag}), these results except \eqref{3838} were already proved with the choice of $\xi= 3\log\log N$ in \cite{EKYY1}. Furthermore, the proofs in \cite{EKYY1} can be extended to the case $C_0=O(N^{-1})$ with almost no revision. Heuristically speaking, it only brings the error of order $N^{-1}$. As we can see from the proofs, these inequalities still hold after multiplying $G$, $\lambda$, and $\bf u$   by a factor of $1+O(N^{-1})$.

In order to prove \eqref{3838}, we choose $\xi= 3\log\log N $ as above and let $C_1=1$ in (2.15) of \cite{EKYY1} so that $(\log N)^{C_1\xi}= \varphi^3$. In (4.36)-(4.46) of \cite{EKYY1}, it was actually proved that (see (4.45) and (4.46) of \cite{EKYY1}), with 3-high probability,
$$
|m(z)-m_{sc}(z)|\leq \frac{1}{(\log N)N\eta },\quad \im m(z)\leq \frac{1}{(\log N)N\eta }.
$$
where $\Lambda:=|m-m_{sc}|$ and $\al\sim \sqrt{\kappa}$ in \cite{EKYY1}. Hence, we only need to change the exponent to obtain the first two parts of \eqref{3838}. To achieve that, one can replace $\varphi^3 (\log N)^2$ in (4.37) of \cite{EKYY1} with $\varphi^{3+2a}$, and replace $\varphi^3 (\log N)$ in (4.38), (4.42), and the inequality below (4.44) of \cite{EKYY1} with $\varphi^{3+ a}$. Then, as in (4.40), (4.45), and (4.46) of \cite{EKYY1}, we obtain the first two terms of \eqref{3838}

Now we prove the third part of \eqref{3838}. Using (3.32) and (3.58) of \cite{EKYY1}, we have that with 3-high probability, 
\be\label{tyzz}
\max_{i\ne j} |G_{ij}| \leq O(q^{-1})+\frac{1}{\varphi^{(a/2)-2} N \eta}
\ee
From \eqref{3737}, we have $\eta \ge \varphi^{O(1)}N^{-2/3}$. Together with assumptions on $q$ and \eqref{tyzz}, we obtain the third part of \eqref{3838} and complete the proof.

\end{proof}

\begin{rem} \label{prozeta}
In \cite{EKYY1}, the author proved  that, for any fixed $\zeta$, there exists $C_\zeta$ such that when $C = C_{\zeta}$ the above statements hold with $\zeta$-high probability. In this paper, we only use $1 \le \zeta \le 3$ in the   proofs. We note that, however, similar results to Lemma \ref{lem: old} holds with $\zeta$-high probability for some constant $C=C_\zeta$.
\end{rem}

\begin{thm}[Edge universality on generalized Wigner matrix: Theorem 2.7 in \cite{EKYY2}] \label{thm: old edge un}
Let $H^{\w}$ be a GOE and $H^{\V}$ a generalized symmetric Wigner matrix with 
$$
\E^{\V} |h_{ij}|^2 = N^{-1} + \delta_{ij} N^{-1}.
$$
Assume that $H^{\V}$ satisfies the bounded support condition with $q=N^{\phi}$, for some constant $1/3 < \phi \leq 1/2$. Then, there exists a constant $\delta > 0$ such that, for any $s \in \mathbb{R}$, we have
\begin{equation} \label{qset}
\p^{\w} \Big( N^{2/3} (\lambda_N -2) \leq s - N^{-\delta} \Big) - N^{-\delta} \leq \p^{\V} \big( N^{2/3} (\lambda_N -2) \leq s \big) \leq \p^{\w} \Big( N^{2/3} (\lambda_N -2) \leq s + N^{-\delta} \Big) + N^{-\delta}.
\end{equation}
Here, $\p^{\V}$ and $\p^{\w}$ denote the laws of the ensembles $H^{\V}$ and $H^{\w}$, respectively.
\end{thm}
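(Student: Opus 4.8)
The statement is \cite[Theorem 2.7]{EKYY2}, to which we refer for the full proof; here we indicate the structure of the argument, an application of the Green function comparison method near the spectral edge developed in \cite{EYY1, EYY}. The plan is to reduce the comparison of the laws of $\lambda_N$ under $\p^\V$ and $\p^\w$ to a comparison of expectations of a smooth functional of the resolvents. Fix $E$ in the edge window, i.e.\ $E = 2 + sN^{-2/3}$ with $|s|$ bounded, and set $\eta_0 := \varphi^C N^{-2/3}$ and $\ell := \varphi^C N^{-2/3}$ for a large constant $C$. Since $q = N^\phi > N^{1/3}$, \eqref{sjsy} shows that with high probability every eigenvalue of $H$ lies below $2 + \varphi^C N^{-2/3}$, so with high probability the interval $[E, E+\ell]$ contains every eigenvalue of $H$ that exceeds $E$. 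Using the local semicircle law \eqref{scm} and the rigidity estimate in item (4) of Lemma \ref{lem: old}, one checks that the number of eigenvalues of $H$ in $[E,\infty)$ equals, up to an additive error that vanishes with high probability, a Lipschitz function of
$$
\N(E) := N\int_E^{E+\ell}\im m(y + i\eta_0)\,\mathrm{d}y.
$$
Hence, for a fixed smooth non-increasing $K : \mathbb R \to [0,1]$ with $K \equiv 1$ on $(-\infty, 1/3]$ and $K \equiv 0$ on $[2/3, \infty)$, one has, uniformly in bounded $|s|$,
$$
\big| \, \p \big( N^{2/3}(\lambda_N - 2) \le s \big) - \E\, K(\N(E)) \, \big| \le N^{-\delta}.
$$
It therefore suffices to prove $\big| \E^\V K(\N(E)) - \E^\w K(\N(E)) \big| \le N^{-\delta}$ uniformly in bounded $s$; the usual comparison-to-indicator argument then yields \eqref{qset}.

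This last bound is proved by the Lindeberg replacement strategy: one orders the independent entries of $H$, interpolates from $H^\w$ to $H^\V$ by changing them one at a time, and Taylor expands $\E\, K(\N(E))$ in the single entry $h = h_{ij}$ being changed. By independence the expansion reads $\sum_{k \ge 0}\frac{\E h^k}{k!}\,\E\big[ \partial_h^k K(\N(E)) \big|_{h=0} \big]$, and since the means vanish and the second moments of $H^\V$ and $H^\w$ coincide, the terms with $k = 0, 1, 2$ cancel in the difference. The terms with $k \ge 3$ are estimated by differentiating $G$ through $\partial_{h_{ij}}G_{kl} = -(G_{ki}G_{jl} + G_{kj}G_{il})$ and invoking the edge bound \eqref{Gij estimate} together with the Ward identity $\sum_k |G_{ik}(z)|^2 = \eta^{-1}\im G_{ii}(z)$, which at $\eta_0 \approx N^{-2/3}$ (where $\im m_{sc} \approx N^{-1/3}$) gives $\sum_k |G_{ik}|^2 = O(N^{1/3})$; one thereby obtains $|\partial^k_{h_{ij}} \N(E)| \le \varphi^{C_k} N^{-1/3}$ for all $k \ge 1$. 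For $k \ge 4$ these per-entry bounds already suffice, since the bounded support condition gives $\E^\V|h_{ij}|^k \le q^{-(k-2)}\,\E^\V|h_{ij}|^2 = O(N^{-1-(k-2)\phi})$, so the $k$-th order contribution is $O\big( N^2 \cdot N^{-1-(k-2)\phi} \cdot N^{-1/3} \big) = o(1)$ when $k \ge 4$ and $\phi > 1/3$.

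The main obstacle is the third-order term, $\tfrac16\sum_{i\le j}\E^\V h_{ij}^3\,\E\big[ \partial^3_{h_{ij}} K(\N(E)) \big]$. The per-entry bounds only give $O\big( N^2 \cdot N^{-1-\phi} \cdot N^{-1/3} \big) = O(N^{2/3 - \phi})$, which is not $o(1)$ for $\phi \le 2/3$; reaching $\phi > 1/3$ requires extracting an extra factor (of roughly $N^{1/3}$) from cancellations in the sum over the matrix indices. Expanding $\partial^3_{h_{ij}} K(\N(E))$ by the chain rule and carrying out the $y$-integration --- which, via $(G^2)_{ab} = \partial_y G_{ab}$, turns integrals of $G^2$ into boundary values of $G$ --- reduces the task to bounding expectations of sums over the matrix indices of products of resolvent entries; these are controlled beyond the trivial estimate by the fluctuation averaging mechanism (whereby averages of resolvent entries are smaller than individual ones) together with the Ward identity $\sum_{ij}|G_{ij}|^2 = \eta^{-1}\tr \im G = O(N^{4/3})$ at $\eta_0 \approx N^{-2/3}$. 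Once these gains are collected, the third-order contribution is $O(N^{1/3 - \phi}) = N^{-\delta}$ for $\phi > 1/3$. Summing the third- and higher-order estimates over the $O(N^2)$ replacement steps gives $\big| \E^\V K(\N(E)) - \E^\w K(\N(E)) \big| \le N^{-\delta}$, which as explained above yields \eqref{qset}.
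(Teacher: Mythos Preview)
Your proposal is correct and faithful to the actual argument. Note, however, that the paper itself does not prove Theorem~\ref{thm: old edge un}: it is quoted directly as Theorem~2.7 of \cite{EKYY2}, and the paper only supplies a proof for the slight extension in Lemma~\ref{lem: ex edge un}. Your sketch accurately summarizes the structure of the argument in \cite{EKYY2} (and of the edge comparison in \cite{EYY}): the reduction of $\p(N^{2/3}(\lambda_N-2)\le s)$ to $\E\,K(\N(E))$ via rigidity and the local law, followed by a Lindeberg swap in which the $k=0,1,2$ terms cancel by moment matching, the $k\ge 4$ terms are killed by the bounded support condition with $\phi>1/3$, and the $k=3$ term is handled by extracting an additional $N^{-1/3}$ from the sum over matrix indices. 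This is exactly the mechanism the paper itself invokes when proving Lemma~\ref{lem: ex edge un} (see the Claim there, equations \eqref{sbc}--\eqref{sbc2}, and the telescopic sum \eqref{eq:tel_1}), where the target bound $N^{1/3+C\epsilon}q^{-1}$ is precisely your $N^{1/3-\phi}$ up to $\epsilon$-corrections. One minor discrepancy: the scale used in \cite{EKYY2} and in the paper's Claim is $\eta=N^{-2/3-\epsilon}$ rather than your $\eta_0=\varphi^C N^{-2/3}$, but this does not affect the substance of the argument.
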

  
\begin{rem} \label{genedge} As in \cite{EYY} and \cite{EKYY2}, Theorem \ref{thm: old edge un}, as well as Lemma \ref{lem: ex edge un} and Theorem \ref{thm: new edge un} below, can be extended to finite correlation functions of extreme eigenvalues. For example, we have the following extension:
 \begin{align}\label{twa}
&  \p^{\bf w} \Big ( N^{2/3}  ( \lambda_N -2) \le s_1- N^{-\e}, \ldots, N^{2/3} ( \lambda_{N-k} -2) 
\le s_{k+1}- N^{-\e} \Big )- N^{-\delta}   \nonumber \\
& 
 \le   \p^{\bf v} \Big ( N^{2/3} (  \lambda_N -2) \le s_1,  \ldots, N^{2/3} ( \lambda_{N-k} -2) 
\le s_{k+1}  \Big )  \\
 &  \le 
 \p^{\bf w} \Big ( N^{2/3} ( \lambda_N -2) \le s_1+ N^{-\e}, \ldots, N^{2/3} ( \lambda_{N-k} -2)
 \le s_{k+1}+ N^{-\e}   \Big )+ N^{-\delta}  \nonumber 
\end{align}  
for all $k$ fixed and $N$ sufficiently large. The proof of \eqref{twa} is similar to that of \eqref{qset} except that it uses the general form of the Green function comparison theorem. 
\end{rem}

We slightly extend this result as follows:

\begin{lem}\label{lem: ex edge un}
Assume the same condition as in Theorem \ref{thm: old edge un} except that we assume $\E^\V |h_{ii}|^2 \leq  C_0/N$ for some uniform $C_0$ instead. Then, the conclusion of Theorem \ref{thm: old edge un} still holds.
\end{lem}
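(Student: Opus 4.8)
The plan is to reduce Lemma~\ref{lem: ex edge un} to Theorem~\ref{thm: old edge un} by modifying only the diagonal entries of $H^{\V}$, which cannot affect edge statistics. Concretely, I would first observe that the diagonal makes a negligible contribution to $\lambda_N$: writing $H^{\V} = H^{\V}_{\mathrm{off}} + D$, where $D = \mathrm{diag}(h_{11},\dots,h_{NN})$, the bounded support condition gives $|h_{ii}| \le q^{-1} = N^{-\phi}$ with very high probability, so $\|D\| \le N^{-\phi}$, which is far smaller than the $N^{-2/3}$ scale of the edge fluctuations. By Weyl's inequality, $|\lambda_N(H^{\V}) - \lambda_N(H^{\V}_{\mathrm{off}})| \le \|D\| \le N^{-\phi}$ with very high probability, so $N^{2/3}|\lambda_N(H^{\V}) - \lambda_N(H^{\V}_{\mathrm{off}})| \le N^{2/3-\phi}$, which is $o(1)$ only if $\phi > 2/3$ — so a crude norm bound is \emph{not} enough and I must instead compare $H^{\V}$ with an auxiliary matrix that has the \emph{correct} diagonal variance.

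So the actual plan is a one-step Green function comparison (or, more cheaply, a direct invocation of Theorem~\ref{thm: old edge un} via an interpolation). Let $\wt{H}$ be the generalized symmetric Wigner matrix with the same off-diagonal entries as $H^{\V}$ but with diagonal entries $\wt{h}_{ii}$ that are, say, independent of everything, mean zero, with $\E \wt{h}_{ii}^2 = N^{-1} + N^{-1} = 2N^{-1}$ (matching the GOE normalization required in Theorem~\ref{thm: old edge un}) and bounded support at scale $N^{-\phi}$; such a rescaling of $h_{ii}$ by a bounded factor clearly preserves the bounded support condition with $q = N^{\phi}$. Then $\wt H$ satisfies the hypotheses of Theorem~\ref{thm: old edge un} exactly (its off-diagonal second moments are $N^{-1}$, its diagonal second moments are $N^{-1}+\delta_{ij}N^{-1}$, and $q = N^{\phi}$ with $1/3 < \phi \le 1/2$), so \eqref{qset} holds with $H^{\V}$ replaced by $\wt H$. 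It then remains to compare $H^{\V}$ and $\wt H$, which differ only in the $N$ diagonal entries, each of which has variance $O(N^{-1})$ and bounded support $N^{-\phi}$.

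For that comparison I would apply the Green function comparison argument of \cite{EYY}, \cite{EKYY2} — the same machinery underlying Theorem~\ref{thm: old edge un} — but only along the diagonal. Replacing the $N$ diagonal entries one at a time and telescoping, the error per swap in the relevant functional of $\im m(z)$ near $z = E + i\eta$ with $E$ at the edge and $\eta \sim N^{-2/3-\e}$ is controlled by a Taylor expansion in the single entry: the first and second moments are matched up to $O(N^{-1})$ (indeed the variances of $h_{ii}$ and $\wt h_{ii}$ differ by $O(N^{-1})$, not zero, so the second-order term contributes $O(N^{-1}) \times (\text{derivative bound})$ per entry), the third and higher moments are bounded by $q^{-1} = N^{-\phi}$ times the previous moment and hence are $o(N^{-1})$, and the derivatives of $m(z)$ with respect to a diagonal entry are controlled by the local law and the $G_{ij}$ bounds \eqref{Gij estimate} and \eqref{deloc} from Lemma~\ref{lem: old}. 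Summing $N$ such swaps gives a total error $N^{-\delta'}$ for some $\delta' > 0$, and combining with the version of \eqref{qset} for $\wt H$ yields \eqref{qset} for $H^{\V}$ after relabeling $\delta$.

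The main obstacle I anticipate is purely bookkeeping rather than conceptual: one must check that the second-moment \emph{mismatch} (of size $O(N^{-1})$ per diagonal entry, arising because the hypothesis only says $\E^{\V}|h_{ii}|^2 \le C_0/N$ rather than $= N^{-1}$) does not accumulate to an $O(1)$ error over the $N$ swaps. This is exactly the kind of estimate that the Green function comparison theorem is built to absorb — the relevant derivative of the functional in a diagonal entry carries an extra smallness because diagonal resolvent entries $G_{ii}$ and the derivative structure are one order better than a naive power count suggests — and in \cite{EYY}, \cite{EKYY2} the diagonal case is explicitly remarked to be easier than the off-diagonal case. Hence the honest write-up is short: state $\wt H$, cite Theorem~\ref{thm: old edge un} for it, and invoke the diagonal Green function comparison estimate, noting that the $O(N^{-1})$ second-moment discrepancy is handled identically to the treatment of the diagonal in the proof of Theorem~\ref{thm: old edge un} itself.
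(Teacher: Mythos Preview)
Your approach is correct and, at the level of the actual estimate that does the work, coincides with the paper's: both arguments ultimately show that swapping the $N$ diagonal entries one at a time costs at most $O(N^{-1/3+C\e}q^{-2})$ per swap, hence $O(N^{2/3+C\e}q^{-2}) = O(N^{2/3-2\phi+C\e}) = o(1)$ in total (this is precisely where the hypothesis $\phi>1/3$ enters). The paper organizes this slightly differently: rather than introducing your auxiliary matrix $\wt H$ and invoking Theorem~\ref{thm: old edge un} as a black box, it reopens the proof of Theorem~\ref{thm: old edge un} (i.e., the Green function comparison with GOE over all $N(N+1)/2$ entries), cites \cite{EKYY2} verbatim for the $a\neq b$ swaps, and then carries out the $a=b$ swap estimate explicitly via a short resolvent expansion $S = R - RVR + (RV)^2S$ together with the a priori bounds \eqref{eq:S_bound}--\eqref{eq:R_bound}. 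The key computation --- that $\E_{aa}(y^S-y^R)$ has its linear-in-$V_{aa}$ part killed by $\E V_{aa}=0$ and its quadratic part controlled by $\E V_{aa}^2 = O(N^{-1}) \le q^{-2}$ --- is exactly the ``bookkeeping'' you anticipated, and it is what makes the $O(N^{-1})$ second-moment \emph{mismatch} harmless. Your two-step packaging (first $\wt H$ vs.\ GOE, then $H^{\V}$ vs.\ $\wt H$) is a clean modular alternative; the paper's one-step packaging avoids constructing $\wt H$ at the price of dipping back into the proof of \cite{EKYY2}.
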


We postpone the proof of this lemma to the end of this section.

\bigskip

To prove our main result, we claim the following three lemmas, which extend the previous results to the Wigner matrix with bounded support condition of small $q$. First, we improve the previous result on rigidity. We define the {\it normalized empirical counting function} by
$$
{\mathfrak n}(E):= \frac{1}{N}\# \{ \lambda_j\le E\}.
$$
Let
$$
n_{sc}(E) :  = \int_{-\infty}^E \varrho_{sc}(x) {\rm d} x
$$
be the distribution function of the semicircle law.

\begin{thm}[Rigidity of eigenvalues: small $q$ case] \label{thm: new rigidity}
Let $H$ be a generalized symmetric Wigner matrix with some constant $C_1$ such that for any $1 \le i< j\leq N$
$$
\E |h_{ii}|^2 \leq C_1/N, \quad \E  |h_{ij}|^2=1/N, \quad \E |h_{ij}|^3 \leq C_1 N^{-3/2}, \quad \E  |h_{ij}|^4 \leq C_1(\log N) N^{-2}
$$ 
and $H$ satisfies the bounded support condition with $q=N^{\phi}$ for some constant $\phi>0$. Then, there exist constants $C>0$ and $N_0$, depending only on $C_1$ and $\phi$, such that with high probability we have
\be \label{rigiditynew}
\bigcup_j \bigg\{ |\lambda_j-\gamma_j| {\,\leq\,} \varphi^C \Big [ \min \big ( \, j ,  N-j+1 \, \big) \Big]^{-1/3} N^{-2/3} \bigg\} 
\ee
and
\be\label{rigiditynew2}
\sup_{|E|\le 5} \big| {\mathfrak n} (E)-n_{sc}(E)\big|  {\,\leq\,}  \frac{\varphi^C}{N} 
\ee
for any $N > N_0$.
\end{thm}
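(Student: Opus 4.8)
The plan is first to reduce \eqref{rigiditynew} and \eqref{rigiditynew2} to a \emph{strong} local semicircle law for $H$ on the whole domain ${\bf S}(C)$, and then to establish that strong law by comparison with a matrix of large support. Concretely, I would prove that with high probability
$$
|m(z)-m_{sc}(z)|\le\frac{\varphi^C}{N\eta}\quad\text{and}\quad\max_{i,j}\bigl|G_{ij}(z)-\delta_{ij}m_{sc}(z)\bigr|\le\varphi^C\Bigl(\sqrt{\tfrac{\im m_{sc}(z)}{N\eta}}+\tfrac{1}{N\eta}\Bigr),\quad z\in{\bf S}(C),
$$
which is the local semicircle law of Lemma~\ref{lem: old}(1) with the obstructing $q^{-1}$ and $q^{-2}$ error terms removed. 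Granting it, \eqref{rigiditynew2} follows from the averaged law by the standard Helffer--Sj\"ostrand argument (as in \cite{EYY2,EKYY1,EKYY2}), and \eqref{rigiditynew} follows from \eqref{rigiditynew2} by inverting the counting function, using $n_{sc}(E)\sim(2-|E|)_+^{3/2}$ near $\pm2$; in particular the case $j=N$ yields $\|H\|\le 2+\varphi^C N^{-2/3}$. Neither reduction is sensitive to the size of $\phi$. The difficulty is the strong local law: the self-consistent-equation analysis of \cite{EKYY1} cannot reach it once $\phi<1/3$, since the large-deviation error of the quadratic forms $\sum_{j,k}h_{ij}\overline{h_{ik}}G^{(i)}_{jk}$ is only controlled down to order $q^{-1}=N^{-\phi}$, which is fatal near the spectral edges.

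So, second, construct a generalized symmetric Wigner matrix $\wt H$ whose entries agree with those of $H$ in the first four moments (entrywise) and which satisfies the bounded-support condition with $\wt q=N^{1/2}(\log N)^{-C_2}$. Such a matrix exists: this is a four-moment matching problem, consistent with the hypotheses $\E|h_{ij}|^2=N^{-1}$, $\E|h_{ij}|^3\le C_1N^{-3/2}$, $\E|h_{ij}|^4\le C_1(\log N)N^{-2}$, because for $C_2$ large the window of admissible third and fourth moments of a distribution on $[-\wt q^{-1},\wt q^{-1}]$ with variance $N^{-1}$ contains those of $h_{ij}$. Since $\wt q\ge N^{2/5}$, Lemma~\ref{lem: old} applies to $\wt H$ and gives it the strong local law, \eqref{rigiditynew}, and \eqref{rigiditynew2} outright; in particular $\E|m_{\wt H}(z)-m_{sc}(z)|^{2p}\le\bigl(\varphi^C(N\eta)^{-1}+\varphi^C N^{-1/2}\bigr)^{2p}+N^{-D}$ for every fixed $p$ and $D$. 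It then suffices to transfer to $H$, for each $z$ in an $N^{-10}$-mesh of ${\bf S}(C)$ and each fixed $p$ chosen large in terms of $\phi$, an inequality of the form $\E|m_{H}(z)-m_{sc}(z)|^{2p}\le C\,\E|m_{\wt H}(z)-m_{sc}(z)|^{2p}+N^{-D}$; a union bound over the mesh together with the Lipschitz continuity of $m$ and $m_{sc}$ in $z$ then yields the strong averaged law for $H$, and the entrywise law is handled the same way.

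Third, the transfer is a Lindeberg replacement in the spirit of the Green function comparison method, extended (as announced in the introduction) to matrices of support only $N^{-\phi}$. Writing $|m(z)-m_{sc}(z)|^{2p}$ as a polynomial $\mathcal F$ in the entries $\{G_{ab}(z),G_{ab}(\bar z)\}$ and swapping the entries of $\wt H$ for those of $H$ one pair at a time, four-moment matching annihilates the first four terms of the resolvent expansion in $h_{ij}$ and leaves a fifth-order term of the schematic shape $\bigl[\E h_{ij}^5-\E\wt h_{ij}^5\bigr]\,\E\bigl[\partial_{h_{ij}}^5\mathcal F\bigr]$; here the fourth-moment hypothesis pays off, giving $|\E h_{ij}^5|,|\E\wt h_{ij}^5|\le q^{-1}\varphi^C N^{-2}=N^{-\phi}\varphi^C N^{-2}$. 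Bounding $\partial_{h_{ij}}^5\mathcal F$ crudely, using only the \emph{weak} local law of Lemma~\ref{lem: old}(1) --- which holds for $H$, for $\wt H$, and for every hybrid matrix along the interpolation, all having $q\ge N^{\phi}\ge\varphi^C$, so $|G_{ab}|\le\varphi^C$ throughout --- and summing over the $\sim N^2/2$ pairs yields only a gain of one factor $N^{-\phi}$ over the a priori size, which is not enough at the edge. The remedy, which is the core of the paper's method, is not to estimate $\partial_{h_{ij}}^5\mathcal F$ directly, but to split the resulting Green-function functional into its value for $\wt H$ plus its $H$--$\wt H$ difference: the first piece is controlled by the \emph{strong} local law, already available for $\wt H$, while the second is a new, more complicated Green-function functional to which the same Lindeberg replacement applies, producing another factor $N^{-\phi}$. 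Iterating $O(\phi^{-1})$ times, the accumulated gain $N^{-c}$ dominates the a priori size (which is at worst $N^{o(1)}$), and the comparison closes.

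The main obstacle is precisely this iterated comparison: one must arrange the recursion so that each round genuinely produces a further factor $q^{-1}$, so that the functionals accumulating along the recursion remain bounded by $\varphi^{O(1)}$ when evaluated through the strong local law for $\wt H$ and by at most $N^{o(1)}$ when evaluated through the weak law on the hybrids, and so that the recursion terminates after $O(\phi^{-1})$ steps with a usable estimate --- all uniformly for $z\in{\bf S}(C)$, including the delicate regime near the spectral edges, where $|1-m_{sc}^2(z)|\sim\sqrt{\kappa+\eta}$ is small and errors are amplified. The remaining obstacles are routine: the four-moment matching construction of $\wt H$, and checking that the passage from the strong local law to \eqref{rigiditynew}--\eqref{rigiditynew2} goes through with the inputs available here.
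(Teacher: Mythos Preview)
Your proposal is essentially the paper's own argument: construct a four--moment matching comparison matrix $\wt H$ with support $\wt q\sim N^{1/2}(\log N)^{-O(1)}$ (Lemma~\ref{lem: match}), transfer the averaged local law by an iterated Lindeberg comparison that gains a factor $N^{-c\phi}$ per round (Lemma~\ref{lem:p moment} and its proof in Section~\ref{sec7}), and then invoke the Helffer--Sj\"ostrand/inversion machinery of \cite{EYY} to pass from the strong law $|m-m_{sc}|\le\varphi^C/(N\eta)$ on ${\bf S}(C)$ to \eqref{rigiditynew} and \eqref{rigiditynew2}. The description of the recursion, including the point that the accumulated functionals are evaluated via the \emph{strong} law for $\wt H$ but only the \emph{weak} law along the hybrids, matches the mechanism of \eqref{HHtt}--\eqref{bssm} precisely.

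There is one substantive point you gloss over. The inversion from \eqref{rigiditynew2} to \eqref{rigiditynew} does \emph{not} by itself yield the upper bound $\lambda_N\le 2+\varphi^C N^{-2/3}$: the counting function estimate only tells you that at most $\varphi^C$ eigenvalues exceed $2$, not that none exceed $2+\varphi^C N^{-2/3}$; and the a~priori norm bound \eqref{sjsy} places them only below $2+\varphi^C q^{-2}$, which is useless for small $\phi$. The paper handles this as a separate preliminary step (equations \eqref{tby}--\eqref{tby3}): it applies the same moment comparison Lemma~\ref{lem:p moment}, but with the sharper \emph{out--of--spectrum} inputs for $\wt H$ from Lemma~\ref{lem: old}(5), namely $|\wt m-m_{sc}|\le (\varphi^a N\eta)^{-1}$ and $\max_{i\ne j}|\wt G_{ij}|\le(\varphi^{a/3}N\eta)^{-1}$ on the strip \eqref{3737T}, to conclude $\im m(z)\ll (N\eta)^{-1}$ there and hence rule out eigenvalues in $[2+\varphi^C N^{-2/3},\,2+q^{-1}]$. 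Only after this norm improvement does the paper run the comparison on all of ${\bf S}(C)$ and then quote \cite{EYY}. Your plan needs the same extra step; it uses no new ideas beyond what you already have, but it is not a consequence of inverting \eqref{rigiditynew2}.
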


Next theorem shows that the edge universality holds under the assumptions in Theorem \ref{thm: new rigidity}.

\begin{thm}[Edge universality: small $q$ case] \label{thm: new edge un}
Let $H^{\w}$ be a GOE and $H^\V$ be a generalized symmetric Wigner matrix satisfying the conditions for $H$ in Theorem \ref{thm: new rigidity}. Then, there exists a constant $\delta>0$ such that for any $s\in \mathbb R$, we have
\begin{equation} \label{qset1}
\p^{\w} \Big( N^{2/3} (\lambda_N -2) \leq s - N^{-\delta} \Big) - N^{-\delta} \leq \p^{\V} \big( N^{2/3} (\lambda_N -2) \leq s \big) \leq \p^{\w} \Big( N^{2/3} (\lambda_N -2) \leq s + N^{-\delta} \Big) + N^{-\delta}.
\end{equation}
Here, $\p^{\V}$ and $\p^{\w}$ denote the laws of the ensembles $H^{\V}$ and $H^{\w}$, respectively.
\end{thm}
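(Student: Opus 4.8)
The plan is to prove Theorem \ref{thm: new edge un} by the Green function comparison method, but adapted to deal with the fact that $H^{\V}$ has bounded support only of size $q^{-1} = N^{-\phi}$ with $\phi$ possibly much smaller than $1/2$, so the usual $N^{-1/2}$-type moment bounds on the entries fail and one cannot apply Theorem \ref{thm: old edge un} directly. The key device, as announced in the introduction, is a \emph{chained Lindeberg replacement}: rather than comparing $H^{\V}$ directly to a ``good'' reference matrix, I would interpolate through a sequence of generalized Wigner matrices $H^{(0)} = H^{\V}, H^{(1)}, \dots, H^{(m)}$ with $m = O(\e^{-1})$, where each $H^{(\ell+1)}$ has bounded support of size $N^{-\phi - \ell\e'}$ for a suitable small $\e'$, and whose first four moments match those of $H^{(\ell)}$; after $O(\e^{-1})$ steps one reaches support size $N^{-1/2}(\log N)^{-1}$, at which point Theorem \ref{thm: old edge un} (equivalently Lemma \ref{lem: ex edge un}) applies to identify the edge distribution with that of GOE. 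Concretely, at each step I would write the target probability $\p^{\V}(N^{2/3}(\lambda_N - 2) \le s)$ in the standard way as an expectation of a smooth functional of $\im \tr G(z)$ integrated near the edge at scale $\eta \sim N^{-2/3 - \e}$, following the scheme of Theorem 6.3 in \cite{EYY}, and reduce the edge statement to bounding the difference of $\E F(\im N^{-1}\tr G)$ between consecutive matrices.

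The heart of the argument, and the main obstacle, is that the telescoping sum over the $\le N^2$ entry-replacements produces error terms of the schematic form $N^2 \cdot \E^{(\ell)}[\,|h_{ij}|^5 \cdot (\text{products of }G_{**}\text{ entries})\,]$, and the expected $|h_{ij}|^5$ is only of order $q^{-5} = N^{-5\phi}$ rather than $N^{-5/2}$, which is \emph{not} small enough to close the estimate when $\phi$ is small. The new idea needed here is the one sketched in the introduction: instead of bounding such a difference directly, one re-expresses it — via the resolvent expansion $G = G' + G'(V'-V)G' + \dots$ around the replaced matrix — as a \emph{new, more complicated functional} of Green functions that carries an explicit gain of $N^{-\e}$ (coming from the extra factor of a matrix entry, which is bounded by $N^{-\phi}$ with very high probability, and from the counting of indices). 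This new functional can be bounded trivially for the ``better'' matrix, but not a priori for $H^{(\ell)}$; so one recursively applies the Green function comparison step again to this new functional, picking up another $N^{-\e}$, and iterates $O(\e^{-1})$ times until the accumulated gains beat the $N^2$ and the cost of the high moments, terminating the recursion once the functional is simple enough. Making this recursion precise — tracking the combinatorial structure of the ever-more-complicated functionals, verifying the gain at each level, and controlling the number of levels so that the final bound is $o(N^{-\delta})$ — is where essentially all the work lies.

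Two ingredients feed into this machinery and should be invoked at the outset. First, the local semicircle law and the delocalization/resolvent bounds of Lemma \ref{lem: old}, valid for $q \ge \varphi^C$, give the a priori control $\max_{ij}|G_{ij}(z) - \delta_{ij} m_{sc}(z)| \le \varphi^C (q^{-1} + \sqrt{\im m_{sc}/(N\eta)} + (N\eta)^{-1})$ that is used at every level of the recursion to estimate the Green-function factors; since all the interpolating matrices $H^{(\ell)}$ have $q \ge N^{\phi} \ge \varphi^C$, this applies uniformly along the chain. Second — and this is exactly the reason Theorem \ref{thm: new rigidity} is placed before this theorem — the improved rigidity estimate \eqref{rigiditynew} (with the $q^{-2}$ error term \emph{removed}, which holds under the moment assumptions $\E|h_{ij}|^3 \le C_1 N^{-3/2}$, $\E|h_{ij}|^4 \le C_1 (\log N) N^{-2}$ imposed here) is needed to localize $\lambda_N$ near $2$ at the correct scale $N^{-2/3}$; without it the edge functional could not be set up at scale $\eta \sim N^{-2/3-\e}$. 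I would also note that the third and fourth moment hypotheses in Theorem \ref{thm: new rigidity}, and the matching of four moments in the Lindeberg step, are precisely what make the first ``dangerous'' terms in the telescoping sum vanish, so that the leading surviving term is the fifth-order one handled by the recursion.

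Finally, I would handle the diagonal entries separately and cheaply: exactly as in Lemma \ref{lem: ex edge un}, the condition $\E|h_{ii}|^2 \le C_1/N$ (rather than the GOE value $2/N$) contributes only a rank-controlled perturbation whose effect on the edge is $O(N^{-\delta})$, absorbed into the error terms of \eqref{qset1}; so the substantive content is entirely in the off-diagonal comparison described above. The extension to joint distributions of the top $k$ eigenvalues, as in Remark \ref{genedge}, follows the same proof verbatim using the general (multi-point) form of the Green function comparison theorem, so I would prove the $k=1$ statement \eqref{qset1} and remark that the general case is identical.
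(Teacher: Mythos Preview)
Your second paragraph correctly identifies the real mechanism the paper uses: the iterated Green function comparison in which the fifth-order (and higher) remainder is not bounded directly but is rewritten as a more complicated functional of Green functions, to which one applies another Lindeberg comparison, gaining a factor $N^{-c\phi}$ each round. That is exactly Lemma~\ref{lem:green}, and together with the rigidity input (Theorem~\ref{thm: new rigidity}) and the structural reduction (Lemma~\ref{lem: edge un str}) this is how the paper proves Theorem~\ref{thm: new edge un}.

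Where you diverge is the outer layer in your first paragraph: the paper does \emph{not} interpolate through a chain $H^{(0)},H^{(1)},\dots,H^{(m)}$ of matrices with progressively shrinking support. Instead, Lemma~\ref{lem: match} produces a \emph{single} moment-matching matrix $\wt H$ already with $q\sim N^{1/2}/\log N$; all of the iteration takes place on the \emph{functionals} (the operators $\mathcal P_{\gamma,\mathbf k}$ of Definition~\ref{taz}, equations \eqref{HHtt}--\eqref{HHtt3}), not on a sequence of intermediate ensembles. Your chain is at best redundant: for the early steps where $q^{(\ell)}<N^{1/3}$ the direct per-swap bound $N^{-5/3+C\e}q^{-1}$ of Theorem~\ref{thm: old edge un} still fails, so you would need the functional recursion inside each link of the chain anyway; once you have that recursion, the chain buys nothing. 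The paper's one-shot matching is cleaner, and it also avoids having to re-verify the third/fourth moment hypotheses of Theorem~\ref{thm: new rigidity} along the chain.

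One correction to your diagnosis of the obstacle: under the hypotheses of Theorem~\ref{thm: new rigidity} one has $\E|h_{ij}|^4\le C_1(\log N)N^{-2}$, so $\E|h_{ij}|^5\le q^{-1}\E|h_{ij}|^4\lesssim N^{-2-\phi}$, \emph{not} $N^{-5\phi}$. Summed over $N^2$ swaps this already gives $N^{-\phi}$, which is small. The genuine difficulty is that the off-diagonal Green function entries for the small-$q$ matrix are only controlled by $q^{-1}=N^{-\phi}$ with high probability (rather than $N^{-1/3+\e}$), so the products of $G$'s multiplying the fifth-moment factor cannot be estimated sharply for $H^\V$ itself. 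The paper's recursion resolves this by pushing all such products down to $\wt G$, where the good bound \eqref{781s} is available; this is the content of \eqref{HHtt3} and the subsequent estimates in Section~\ref{sec7}.
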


Finally, we show a weak bound on $G_{ij}$ ($i\neq j$) of $H$ satisfying the conditions in Theorem \ref{thm: new rigidity}. 
\begin{lem}[Bounds on $G_{ij}$: small $q$ case] \label{lem: WBGij}
Let $H $ be a generalized symmetric Wigner matrix satisfying the conditions for $H$ in Theorem \ref{thm: new rigidity}. Then, for any $0 < c < 1$, $z=E+i\eta$ with $|E|\le 5$, and $\eta\ge N^{-1+c}$, we have the following weak bound on $G_{ij}$ $(i\ne j)$:
\be\label{res: WBGij}
\E|G_{ij}(z)|^2 \leq \varphi^C \left(  \frac{\im m_{sc}(z)}{N \eta}  + \frac{1}{(N \eta)^2}\right).
\ee
\end{lem}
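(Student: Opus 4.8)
The plan is to prove the $L^2$ bound on off-diagonal Green function entries by a self-consistent (moment) estimate, following the structure of the proof of the local semicircle law in \cite{EKYY1} but tracking only the second moment and exploiting the extra control on $\E|h_{ij}|^3$ and $\E|h_{ij}|^4$ that we have assumed. First I would invoke Theorem \ref{thm: new rigidity} together with \eqref{Gij estimate} of Lemma \ref{lem: old}: on the $3$-high probability event where the local law and rigidity hold, one already has $|G_{ij}(z)-\delta_{ij}m_{sc}(z)|\le \varphi^C\big(q^{-1}+\sqrt{\im m_{sc}/N\eta}+(N\eta)^{-1}\big)$, and $|G_{ii}|\sim 1$. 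The point is that this pointwise bound has a term $q^{-1}=N^{-\phi}$ which is too weak for small $\phi$; the $L^2$ bound \eqref{res: WBGij} removes it by averaging. On the complementary very-small-probability event we use the deterministic bound $|G_{ij}|\le \eta^{-1}\le N^{1-c}$ times the probability $e^{-N^{c'}}$, which is negligible.

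The key step is a self-consistent equation for $S:=\frac1{N^2}\sum_{i\ne j}\E|G_{ij}|^2$ (or, more precisely, for the quantity $\frac1N\sum_{j\ne i}\E|G_{ij}|^2$ uniformly in $i$). Using the standard Schur-complement / resolvent identity $G_{ij}=-G_{ii}\big(h_{ij}+\sum_{k\ne i,j}h_{ik}G^{(i)}_{kj}\big)$ for $i\ne j$ (where $G^{(i)}$ is the Green function with the $i$-th row/column removed), I would square, take expectation, and expand. The diagonal term gives $\E|h_{ij}|^2\,\E|G_{ii}|^2=O(1/N)$; the cross term with $\sum_k h_{ik}G^{(i)}_{kj}$ vanishes in expectation after conditioning on $G^{(i)}$ because $h_{ij}$ is independent of $G^{(i)}$ and has mean zero; and the main term is $\E|G_{ii}|^2\sum_{k,l\ne i,j}\E\big(h_{ik}\bar h_{il}G^{(i)}_{kj}\bar G^{(i)}_{lj}\big)$. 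Splitting into $k=l$ and $k\ne l$: the $k=l$ part is $\frac1N\E|G_{ii}|^2\sum_{k}\E|G^{(i)}_{kj}|^2$, which is $\sim \frac1N$ times ($\im m$ from the Ward identity $\sum_k|G^{(i)}_{kj}|^2=\eta^{-1}\im G^{(i)}_{jj}$ plus the $S$-type term $\sum_{k\ne j}|G^{(i)}_{kj}|^2$), producing the self-consistent term $\frac1N\cdot S$ — which is subleading since $\frac1N\ll 1$; the $k\ne l$ part is where $\E(h_{ik}\bar h_{il})=\delta_{kl}/N$ kills everything, so only fourth-order fluctuation terms survive, controlled by $\E|h_{ik}|^4\le C_1(\log N)N^{-2}$. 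Putting the pieces together yields $S\le \varphi^C\big(\frac{\im m_{sc}}{N\eta}+\frac1{(N\eta)^2}\big)+(\text{small})\cdot S$, which closes. Finally I would pass from $G^{(i)}$ back to $G$ using the standard minor-expansion identity $G^{(i)}_{kj}=G_{kj}-G_{ki}G_{ij}/G_{ii}$, whose correction terms are higher order by the a priori pointwise bound, and control $\im m_{sc}$ versus $\im m$ via the local law.

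The main obstacle I expect is bookkeeping the error terms in the $k\ne l$ sum — these are the "fluctuation averaging" terms, and naively each $h_{ik}\bar h_{il}$ with $k\ne l$ contributes through dependence of $G^{(i)}_{kj}$ on $h_{ik}$ and $h_{il}$; one must expand $G^{(i)}$ once more in these entries, which generates terms with three or four factors of $h$ and several Green-function factors. The cleanest route is to replace the direct computation by a high-moment bound à la the fluctuation-averaging lemma of \cite{EKYY1, EYY}, but since we only need the second moment and we are allowed $\varphi^C$ losses, a more elementary second-moment expansion suffices, provided one is careful that the bounded-support cutoff $|h_{ij}|\le q^{-1}=N^{-\phi}$ is used to truncate high-order terms (each extra $h$ factor beyond the fourth moment costs at most $q^{-1}\le 1$, so no divergence). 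A secondary technical point is the uniformity in $z$ over $|E|\le 5$, $\eta\ge N^{-1+c}$: since we want \eqref{res: WBGij} for a fixed $z$ (not a union bound), this is not a serious issue, but I would note that a lattice-plus-Lipschitz argument upgrades it to a union bound over $\mathbf S(C)$ if needed later.
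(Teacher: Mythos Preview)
Your approach is genuinely different from the paper's, and as written it has a real gap.

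The paper does \emph{not} argue directly via the Schur complement and Ward identity. Instead it proves the bound by the Green function comparison machinery developed in Section~\ref{sec7}: one constructs (Lemma~\ref{lem: match}) a moment-matched matrix $\wt H$ with bounded support $q\sim N^{1/2}/\log N$, for which the strong pointwise bound \eqref{Gij estimate} gives $|\wt G_{ij}|\le \varphi^{O(1)}(\sqrt{\im m_{sc}/N\eta}+(N\eta)^{-1})+2\delta_{ij}$ with \emph{no} $q^{-1}$ term. One then applies the iterated comparison inequality \eqref{HHtt} to the product $G_{ij}\overline{G_{ij}}$ (with $s=2$, $\zeta=1$); the point is that every term $\mathcal P_{\gamma_n,{\bf k}_n}\cdots\mathcal P_{\gamma_1,{\bf k}_1}(\wt G_{ij}\overline{\wt G_{ij}})$ is a product of entries of $\wt G$ in which the index $i$ appears exactly twice and never as $\wt G_{ii}$, so at least two off-diagonal factors of $\wt G$ survive, yielding the factor $(\im m_{sc}/N\eta+(N\eta)^{-2})$. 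The $q^{-1}$ obstacle is thus removed by transferring the estimate to $\wt H$, not by a self-consistent argument on $H$ itself.

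The gap in your route is precisely the step where you invoke the Ward identity. You arrive at (schematically) $\E|G_{ij}|^2\lesssim \frac{1}{N\eta}\,\E\,\im G^{(i)}_{jj}$, and to reach \eqref{res: WBGij} you need $\im G^{(i)}_{jj}\lesssim \im m_{sc}+\varphi^C(N\eta)^{-1}$. But for the small-$q$ matrix $H$ the only pointwise input available is \eqref{Gij estimate}, which carries the error $q^{-1}=N^{-\phi}$; this feeds a term $\varphi^C q^{-1}/(N\eta)$ into your bound, and for $z$ near the edge with $\eta\gg N^{-1+\phi}$ and $\im m_{sc}\ll N^{-\phi}$ this dominates the target $\im m_{sc}/(N\eta)+(N\eta)^{-2}$. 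Your ``self-consistent term $\tfrac1N\cdot S$'' does not exist: the Ward identity $\sum_k|G^{(i)}_{kj}|^2=\eta^{-1}\im G^{(i)}_{jj}$ already accounts for the full $k$-sum, so there is no residual quantity of the form $S$ to close against. To make a direct argument work you would need, in addition, an improved \emph{individual-entry} bound $\E|G_{jj}-m_{sc}|$ without the $q^{-1}$ term --- but that is essentially another instance of the same problem, and the paper handles it (and the present lemma) uniformly via comparison with $\wt H$.
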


In the remainder of this section, we give the proof of Lemma \ref{lem: ex edge un}.

\begin{proof}[Proof of Lemma \ref{lem: ex edge un}]
This lemma is a simple extension of Theorem 2.7 of \cite{EKYY2}. Thus, from the proof of Theorem 2.7 in \cite{EKYY2}, we find that it suffices to prove the following claim, which corresponds to Proposition 6.6 of \cite{EKYY2}:

\textit{Claim.} Let $F: \mathbb{R} \to \mathbb{R}$ be a function whose derivatives satisfy
\begin{equation}\label{jhw}
\sup_x | F^{(n)} (x) | (1+ |x|)^{-C_1} \leq C_1, \quad n = 1, 2, 3, 4,
\end{equation}
with some constant $C_1 > 0$. Then, there exists a constant $\widetilde{\epsilon} > 0$, depending only on $C_1$, such that, for any $\epsilon < \widetilde{\epsilon}$ and for any real numbers
\begin{equation}
E, E_1, E_2 \in \{ x : |x-2| \leq N^{-2/3 + \epsilon} \} =: I_{\epsilon},
\end{equation}
and setting $\eta := N^{-2/3 - \epsilon}$, we have
\begin{equation}\label{sbc}
\Big| \E^{\V} F \left( N \eta \: \im m (E + i \eta) \right) - \E^{\w} F \left( N \eta \: \im m (E + i \eta) \right) \Big| \leq N^{1/3 + C \epsilon} q^{-1}
\end{equation}
and
\begin{equation}\label{sbc2}
\left| \E^{\V} F \left( N \int_{E_1}^{E_2} dy \: \im m (y + i \eta) \right) - \E^{\w} F \left( N \int_{E_1}^{E_2} dy \: \im m (y + i \eta) \right) \right| \leq N^{1/3 + C \epsilon} q^{-1}
\end{equation}
for some $C$ and for any sufficiently large $N$.

We only prove \eqref{sbc}, and \eqref{sbc2} can be proved similarly. In order to prove the claim, we only need to prove
\begin{equation} \label{eq:claim_old}
\left| \E F \left( \eta^2 \sum_{i \neq j} G^{\V}_{ij} \overline{G^{\V}_{ij}} \right) - \E F \left( \eta^2 \sum_{i \neq j} G^{\w}_{ij} \overline{G^{\w}_{ij}} \right) \right| \leq C N^{1/3 + C \epsilon} q^{-1}.
\end{equation}
(See the proof of Theorem 6.3 of \cite{EYY} for more detail.) 

Fix a bijective ordering map on the index set of the independent matrix elements,
\begin{equation} \label{eq:ordering}
\Phi: \{ (k, \ell) : 1 \leq k \leq \ell \leq N \} \to \{ 1, \cdots, \gamma_{\max} = \frac{N(N+1)}{2} \},
\end{equation}
and let $H_{\gamma}$ be the Wigner matrix whose matrix elements $h_{k \ell}$ follows the distribution $h_{k \ell}^{\V}$ if $\Phi(k, \ell) \leq \gamma$ and the distribution $h_{k \ell}^{\w}$ otherwise. In particular, $H_0 = H^{\w}$ and $H_{\gamma_{\max}} = H^{\V}$. (Note that the index used here is slightly different from previous papers.) Since the Gaussian distribution satisfies the bounded support condition with $q$, we remark that $H_{\gamma}$ satisfies bounded support condition with $q$ for any $\gamma = 0, 1, \cdots, \gamma_{\max}$.

For simplicity, let
\begin{equation}
G^{\gamma}_{ij} := \left( \frac{1}{H_{\gamma} -z} \right)_{ij}.
\end{equation}
Note that matrices $H_{\gamma}$ and $H_{\gamma -1}$ differ only at $(a, b)$ and $(b, a)$ elements, where $\Phi (a, b) = \gamma$. Let $v_{ab} := h_{ab}^{\V}$ and $w_{ab} := h_{ab}^{\w}$. We define matrices $V$ and $W$ by
\begin{equation}
V_{k \ell} := \delta_{ak} \delta_{b \ell} v_{ab} + \delta_{a \ell} \delta_{bk} v_{ba}, \quad W_{k \ell} := \delta_{ak} \delta_{b \ell} w_{ab} + \delta_{a \ell} \delta_{bk} w_{ba},
\end{equation}
so that we can rewrite $H_{\gamma}$ and $H_{\gamma -1}$ as
\begin{equation}
H_{\gamma} = Q + V, \quad H_{\gamma -1} = Q + W,
\end{equation}
with a matrix $Q$ satisfying $Q_{ab} = Q_{ba} = 0$.

Define Green's functions
\begin{equation} \label{eq:green_fns}
R := \frac{1}{Q-z}, \quad S := \frac{1}{H_{\gamma} - z}, \quad T := \frac{1}{H_{\gamma -1} - z}.
\end{equation}
Note that we have a priori estimates
\begin{equation} \label{eq:S_bound}
\max_{k, \ell} \max_{E \in I_{\epsilon}} |S_{k \ell} (E + i \eta) - \delta_{k \ell} m_{sc} (E + i \eta)| \leq C N^{-1/3 + 2\epsilon}
\end{equation}
from part (1) of Lemma \ref{lem: old}, and
\begin{equation} \label{eq:R_bound}
\max_{k, \ell} \max_{E \in I_{\epsilon}} |R_{k \ell} (E + i \eta) - \delta_{k \ell} m_{sc} (E + i \eta)| \leq C N^{-1/3 + 2\epsilon}
\end{equation}
with high probability. To see \eqref{eq:R_bound}, we first expand $R_{k \ell}$ using the resolvent expansion
\begin{equation}\label{8800}
R = S + SVS + (SV)^2 S + (SV)^3 S + (SV)^4 S + (SV)^5 R.
\end{equation}
Since $V$ has at most two non-zero entries, each term in the expansion can be written as a sum of finitely many terms consisting of the entries of $S$, $V$, and $R$. 
From the bound \eqref{eq:S_bound}, the fact that $v_{ab}$ satisfies bounded support condition with $q$, and the trivial bound $R_{ij} \leq \eta^{-1} \leq N$, we obtain the estimate \eqref{eq:R_bound}.

When $a \neq b$, from the proof of Proposition 6.6 in \cite{EKYY2} we have that
\begin{equation} \label{eq:S-T 1}
\left| \E F \left( \eta^2 \sum_{i \neq j} S_{ij} \overline{S_{ij}} \right) - \E F \left( \eta^2 \sum_{i \neq j} T_{ij} \overline{T_{ij}} \right) \right| \leq C N^{-5/3 + C \epsilon} q^{-1}.
\end{equation}

Consider the case $a = b$. Using the resolvent expansion
\begin{equation}
S = R - RVR + (RV)^2 S,
\end{equation}
we find that
\begin{equation} \begin{split} \label{eq:short_exp}
&S_{ij} \overline{S_{ij}} \\
&= R_{ij} \overline{R_{ij}} - R_{ia} V_{aa} R_{aj} \overline{R_{ij}} - R_{ij} \overline{R_{ia} V_{aa} R_{aj}} + R_{ia} V_{aa} R_{aj} \overline{R_{ia} V_{aa} R_{aj}} \\
& \quad + [(RV)^2 S]_{ij} \overline{R_{ij}} - [(RV)^2 S]_{ij} \overline{R_{ia} V_{aa} R_{aj}} + R_{ij} \overline{[(RV)^2 S]_{ij}} - R_{ia} V_{aa} R_{aj} \overline{[(RV)^2 S]_{ij}} + |[(RV)^2 S]_{ij}|^2.
\end{split} \end{equation}
Note that $V_{aa} \leq q^{-1}$ with high probability. When $i, j \neq a$, we have from the estimates \eqref{eq:S_bound} and \eqref{eq:R_bound} that
\begin{equation}
\big| S_{ij} \overline{S_{ij}} - R_{ij} \overline{R_{ij}} \big| \leq C N^{-1 + 6 \epsilon} q^{-1}
\end{equation}
with high probability. Let
\begin{equation} \label{eq:def_y^R}
y^S := \eta^2 \sum_{i \neq j} S_{ij} \overline{S_{ij}}, \quad y^R := \eta^2 \sum_{i \neq j} R_{ij} \overline{R_{ij}}.
\end{equation}
When $i = a$ or $j = a$, we have one less off-diagonal entries of $R$ or $S$ in the expansion \eqref{eq:short_exp}, but there are only $O(N)$ such terms. Thus, we obtain with high probability that
\begin{equation}
|y^S - y^R| \leq N^{-1/3 + 4 \epsilon} q^{-1}.
\end{equation}

Consider the Taylor expansion
\begin{equation}
F(y^S) - F(y^R) = F'(y^R) (y^S - y^R) + \frac{1}{2} F'' (\zeta) (y^S - y^R)^2
\end{equation}
for some $\zeta$, which lies between $y^S$ and $y^R$. Since $y^S \leq N^{2 \epsilon}$ and $y^R \leq N^{2 \epsilon}$ with high probability as we can see from the bounds \eqref{eq:S_bound} and \eqref{eq:R_bound}, $|F''(\zeta)| \leq N^{C \epsilon}$ with high probability from the assumption. Thus, we obtain
\begin{equation}
\left| \E \big( F(y^S) - F(y^R) \big) \right| \leq \left| \E \big( F'(y^R) \E_{aa} (y^S - y^R) \big) \right| + C N^{-2/3 + C\epsilon} q^{-2}. 
\end{equation}
For the first term of right hand side, we use \eqref{eq:short_exp} and the fact:  $R$ is independent of $V$ and $\E V_{aa}=0$ and bound this term  as $O(N^{-1/3 + 4\epsilon} q^{-2})$. Therefore, 
\begin{equation}
\left| \E \big( F(y^S) - F(y^R) \big) \right| \leq O(N^{-1/3 + 4\epsilon} q^{-2}).
\end{equation}

Note that we can get the same estimate if we put $T$ in place of $S$. Hence, we find that
\begin{equation} \label{eq:S-T 2}
\left| \E F \left( \eta^2 \sum_{i \neq j} S_{ij} \overline{S_{ij}} \right) - \E F \left( \eta^2 \sum_{i \neq j} T_{ij} \overline{T_{ij}} \right) \right| \leq C N^{-1/3 + C \epsilon} q^{-2}.
\end{equation}
We write the quantity in the left hand side of \eqref{eq:claim_old} as a telescopic sum,
\begin{equation} \label{eq:tel_1}
\left| \E F \left( \eta^2 \sum_{i \neq j} G^{\V}_{ij} \overline{G^{\V}_{ij}} \right) - \E F \left( \eta^2 \sum_{i \neq j} G^{\w}_{ij} \overline{G^{\w}_{ij}} \right) \right| = \sum_{\gamma=1}^{\gamma_{\max}} \left(\E F \left( \eta^2 \sum_{i \neq j} S_{ij} \overline{S_{ij}} \right) - \E F \left( \eta^2 \sum_{i \neq j} T_{ij} \overline{T_{ij}} \right)\right).
\end{equation}
Since the number of summands in the right hand side of \eqref{eq:tel_1} with $a \neq b$ is $O(N^2)$ and the number of summands with $a=b$ is $O(N)$, we find that \eqref{eq:claim_old} holds from \eqref{eq:S-T 1} and \eqref{eq:S-T 2}. This proves the claim, which implies the desired lemma.
\end{proof}

\section{Proof of the main result} \label{sec4}

In this section, we prove the main result, Theorem \ref{main result}. Let $H_N$ be a Wigner matrix defined as in Definition \ref{def: SWigner} such that $x_{12}$ and $x_{11}$ have distributions $\nu$ and $\wt \nu$, respectively. We begin by proving the second part of the main result.

\begin{proof}[Proof of the main result: Necessary condition]
Assume that $\lim_{s \to \infty} s^{4} {\p}(|x_{12}| \ge s) \neq 0$. We note that there exists a constant $0 < c_1 < 1/2$ and a sequence $r_1, r_2, \cdots$ such that $r_n \to \infty$ as $n \to \infty$ and 
\be
{\p}(|x_{12}| \ge r_k) \ge c_1 r_k^{-4}.
\ee

Consider an event
\be
\Gamma_N := \{ \text{ There exist } i \text{ and } j, \: 1 \leq i < j \leq N, \text{ such that } |h_{ij}| \geq 4, |h_{ii}| < 1, \text{ and } |h_{jj}| < 1 \}.
\ee
We first show that, when $\Gamma_N$ holds, $\lambda_N (H_N) \geq 3$. Define $\U \in \mathbb{R}^N$ through
\be
\U(m) =
	\begin{cases}
	1/\sqrt{2} & \text{ if } m = i, \\
	(1/\sqrt{2}) \cdot \text{sgn} (h_{ij}) & \text{ if } m = j, \\
	0 & \text{ otherwise }.
	\end{cases}
\ee
Here, $\text{sgn} (h_{ij}) := |h_{ij}| / h_{ij}$. Since $\| \U \|_2 = 1$, it can be easily seen that
\be
\lambda_N (H_N) \geq \langle \U, H_N \U \rangle = h_{ii} | \U_i |^2 + h_{jj} | \U_j |^2 + h_{ij} \U_i \U_j + h_{ji} \U_j \U_i = |h_{ij}| + \frac{h_{ii} + h_{jj}}{2} \geq 3.
\ee

We now prove that there exists a constant $c_0 > 0$, independent of $N$, such that $\p (\Gamma_N) \geq c_0$ for any $N \in \{ \lfloor (r_k / 4)^2 \rfloor : k \in \mathbb{N} \}$ with $N \geq 2 \E|x_{11}|^2$. Note that it implies \eqref{nec}. Define an event
\be
\Gamma'_N := \{ \text{ There exist } i \text{ and } j, \: 1 \leq i < j \leq N, \text{ such that } |h_{ij}| \geq 4 \}.
\ee
Clearly, if $N = \lfloor (r_k / 4)^2 \rfloor$, then we have that
\be \begin{split}
1 - \p (\Gamma'_N) &\leq \big( 1 - \p (|h_{12}| \geq 4) \big)^{N(N-1)/2} \leq \big( 1 - \p (|x_{12}| \geq r_k) \big)^{N(N-1)/2} \\
&\leq \left( 1 - c_1 r_k^{-4} \right)^{N(N-1)/2} \leq (1 - c_2 N^{-2})^{N(N-1)/2}
\end{split} \ee 
for some constant $0 < c_2 < 1$, independent of $N$. Since $(1 - c_2 N^{-2})^{N(N-1)/2} \leq c_3$ for some constant $0 < c_3 < 1$, independent of $N$, we find that $\p (\Gamma'_N) \geq 1- c_3$. Suppose that $\Gamma_N$ holds with $|h_{i' j'}| \geq 4$ for some indices $i'$ and $j'$ $(1 \leq i' < j' \leq N)$. From Markov inequality, we have
\be
\p ( |h_{i'i'}| > 1 ) = \p ( |x_{i'i'}| > \sqrt{N} ) \leq N^{-1} \E |x_{i'i'}|^2 \leq \frac{1}{2}
\ee
and $\p ( |h_{j'j'}| > 1 ) \leq 1/2$ as well. Since the diagonal elements $h_{i'i'}$ and $h_{j' j'}$ are independent to each other, and the event $\p ( |h_{i'i'}| \leq 1, |h_{j'j'}| \leq 1 )$ is indepedent from $\Gamma'_N$, we find that 
\be
{ \p (\Gamma_N) \geq \p (\Gamma'_N) (1/2)^2 \geq \frac{1}{4}(1-c_3).}
\ee
This completes the proof.
\end{proof}

Next, we prove that, if \eqref{crit} holds, then
\begin{equation} \label{suff}
\p \Big( N^{2/3} (\wt \lambda_N -2) \leq s - N^{-\delta} \Big) - N^{-\delta} \leq \p^{\V} \big( N^{2/3} (\lambda_N -2) \leq s \big) \leq \p \Big( N^{2/3} (\wt \lambda_N -2) \leq s + N^{-\delta} \Big) + N^{-\delta},
\end{equation}
where $\wt \lambda_N$ denotes the largest eigenvalue of GOE.

If $H_N$ satisfies the assumption in Theorem \ref{thm: new rigidity}, then \eqref{suff} indeed holds as we have seen from Theorem \ref{thm: new edge un}. Thus, we construct from $H_N$ a random matrix $H^S$ satisfying the bounded support condition and the moment condition in Theorem \ref{thm: new rigidity}. Comparing the largest eigenvalue of $H^S$ with that of $H_N$, we will show that the difference between them will be negligible with probability $1 - o(1)$. 

\begin{proof}[Proof of the main result: Sufficient condition]
For fixed $\e>0$, any $N$, define
\be \label{de61}
\al:=\al_N:=\p\left(|x_{12}|> N^{1/2-\e}\right), \quad \wt \al:=\wt\al_N:=\p\left(|x_{11}|> N^{1/2-\e}\right),
\ee
\be\label{de62}
\beta:= \beta_N:=\E \left[ {\bf 1}\left(|x_{12}|>  N^{1/2-\e}\right) x_{12} \right], \quad  \wt\beta:=\wt \beta_N:=\E \left[ {\bf 1}\left(|x_{11}|>  N^{1/2-\e}\right) x_{11} \right].
\ee
By \eqref{crit} and integration by parts, it implies that for any $\delta$ and large enough $N$, 
\be
\al \le \delta N^{-2+4\e},\quad \wt \al \le \delta N^{-1+2\e},
\ee
\be
|\beta|\le \delta N^{-3/2+3\e},\quad | \wt \beta|\le \delta N^{-1/2+\e}.
\ee

Let $\nu_S$, $\nu_L$ , $\wt \nu_S$, $\wt \nu_L$ have the distribution densities:
\be\label{95}
\rho_{\nu_S}(x+\beta)= {\bf} 1_{|x|\leq N^{1/2-\e}} \frac{\rho_\nu(x) }{1-\al_N}, \quad \rho_{\nu_L}(x { +\beta})= {\bf }1_{|x|> N^{1/2-\e}} \frac{\rho_\nu(x) }{ \al_N},
\ee
\be\label{96}
\rho_{\wt \nu_S}(x+\wt\beta)= {\bf} 1_{|x|\leq N^{1/2-\e}} \frac{\rho_{\wt\nu}(x) }{1-\al_N}, \quad \rho_{\wt \nu_L}(x { +\wt\beta} )= {\bf }1_{|x|> N^{1/2-\e}} \frac{\rho_{\wt\nu}(x) }{ \al_N}.
\ee
Here, the subindices $S$ and $L$ are for small and large, respectively. Let $\nu_c$, $\wt \nu_c$ be the distribution such that $x=1$ with probability $\al $ and $\wt \al$, otherwise $x=0$.  

Let $H^S$, $H^L$ and $H^C$ be the random matrices such that $H^S_{ij}=N^{-1/2} x^S_{ij}$, $H^L_{ij}=N^{-1/2} x^L_{ij}$, and $H^C_{ij}= c_{ij}$, where $x^S_{ij}$, $x^L_{ij}$, and $c_{ij}$ are independent random variables such that:

(1) the entries $x^S_{ij}$ have distribution $\nu_S$ if $i\ne j$ and  $x^S_{ii}$ have distribution $ \wt \nu_S$,

(2) the entries $x^L_{ij}$ have distribution $\nu_L$ if $i\ne j$ and  $x^L_{ii}$ have distribution $ \wt \nu_L$,  

(3) the entries $c_{ij}$ have distribution $\nu_c$ if $i\ne j$ and $c_{ii}$ have distribution $\wt \nu_c$. 
 
Clearly for independent  $H^S_{ij}$, $H^L_{ij}$ and $H^C_{ij}$, we know 
\be \label{de67}
H_{ij} \sim H^S_{ij} (1-H^C_{ij}) + H^L_{ij} H^C_{ij} + N^{-1/2} \beta + N^{-1/2} \wt \beta \delta_{ij},
\ee
where the notation ``$\sim$" denotes that the both sides have the same distribution. It is easy to see that the matrix $M$ defined by
$$
M_{ij}=N^{-1/2}\beta+N^{-1/2}\wt \beta \delta_{ij}
$$
satisfies that $\| M \|\leq N^{-1+4\e}$, which is negligible for small $\e$, i.e.,   
\be\label{jlxf}
\| H^S _{ij}(1-H^C_{ij})+H^L_{ij}H^C_{ij}-H\|\leq N^{-1+4\e}
\ee
By \eqref{crit} and integration by parts, we have for $i\neq j$, 
\be
\E (x^S_{ij})=0,  \quad \E |x^S_{ij}|^2=\frac{1}{1-\al }\int_{|x|\leq N^{1/2-\e}} {  (x+\beta)}^2 \rho_\nu(x) dx=1-O(N^{-1+2\e}),
\ee
$$
\E |x^S_{ij}|^3= O(1),  \quad \E  |x^S_{ij}|^4=   O (\log N) ,
$$
and 
\be
\E x^S_{ii} =0, \quad \E |x^S_{ii}|^2=\frac{1}{1-\wt \al}\int_{|x|\leq N^{1/2-\e}} {  (x+\wt\beta)}^2 \rho_\nu(x) dx=1-o(1).
\ee
We note that the matrix
$$
\wt H^S:=\frac{1}{\sqrt{\E |x^S_{12}|^2}} H^S 
$$
is a Wigner matrix that satisfies the assumption of $H^ \V$ in Theorem \ref{thm: new edge un} ($H $ in Theorem \ref{thm: new rigidity}). Together with the fact $\E |x^S_{12}|^2 = 1-O(N^{-1+2\e})$, we find that there exists a $\delta > 0$ such that, for any $s \in \mathbb{R}$, we have
\begin{equation}\label{qbtt}
\p^{S} \Big( N^{2/3} (\lambda_N -2) \leq s - N^{-\delta} \Big) - N^{-\delta} \leq \p^{GOE} \big( N^{2/3} (\lambda_N -2) \leq s \big) \leq \p^{S} \Big( N^{2/3} (\lambda_N -2) \leq s + N^{-\delta} \Big) + N^{-\delta}.
\end{equation}
where $\p^S$ is the law for $H^S$. We write the first two terms in the right hand side of \eqref{de67} as follows:
$$
H^S_{ij}(1-H^C_{ij}) + H^L_{ij} H^C_{ij}= H^S_{ij} + \wt H_{ij} H^C_{ij}, \quad \wt H_{ij}= H^L_{ij} - H^S_{ij} \, .
$$ 
We can see that $H^C_{ij}$ is independent of $H^S_{ij}$ and $\wt H_{ij}$. Though $\wt H_{ij}$ depends on $H^S_{ij}$, from the condition \eqref{crit} we know that for any $i, j$ 
\be
\p(|\wt H_{ij} H^C_{ij} |\ge 3/4)\le \p(|x_{ij}| \ge (1/2) N^{1/2}) \leq o( N^{-2})+o ( N^{-1}) \delta_{ij}.
\ee
Here, for the last inequality, we used \eqref{crit} and that $\E|x_{ii}|^2<\infty$. Because of this reason, instead of $H^S _{ij} + \wt H_{ij} H^C_{ij}$, we only need to study the matrix whose entries have a cutoff on $\wt H_{ij} H^C_{ij}$ as follows: 
\be
H^S _{ij} + \wt D_{ij} H^C_{ij}, \quad   \wt D_{ij}=   {\bf 1}_{\wt H_{ij}H^C_{ij}\leq 3/4} \wt H_{ij}.
\ee
We note that
\be \label{mkgd0}
\p \left(\max_{i, j}|\wt D_{ij} H^C_{ij}-\wt H_{ij} H^C_{ij}|=0  \right)=1-o(1).
\ee
Furthermore, it is easy to see that we can introduce a cutoff on matrix $H^C$ such that:
\begin{itemize}
\item The matrix with the cutoff, $\wt{H}^C$ coincides with $H^C$ with probability higher than $1 - o(1)$, i.e.,
\be\label{mkgd}
\p( \wt{H}^C = H^C ) \ge 1 - o(1)
\ee

\item The number of non-zero entries are bounded by
\be
\#\{ (i,j): \wt H^C_{ij}\neq 0   \}\leq N^{5\e}
\ee
\item If $\wt{H}^C_{ij}\neq 0$ and $\wt{H}^C_{kl}\neq 0$, then either $\{i,j\}=\{k,l\}$ or $\{i,j\}\cap \{ k,l \}= \emptyset$.
\end{itemize}
 
With \eqref{mkgd}, we only need to study the largest eigenvalue of 
$$
H^S  +  E  \quad {\rm where} \quad E_{ij}:=  \wt D_{ij}\wt H^C_{ij}.
$$
We note 
 $\max_{ij}|E_{ij}|\leq 3/4$, and the rank of $E$ is less than $N^{5\e}$.  

Let $\lambda_N$ and $\mu_N$ be the largest eigenvalue of $H^S$ and $H^S + E$, respectively. We claim that  
\be\label{segg}
\p\left(|\lambda_N-\mu_N|\leq N^{-3/4}\right) = 1-o(1).
\ee
From the claim \eqref{segg} together with \eqref{qbtt}, \eqref{mkgd}, \eqref{mkgd0} and \eqref{jlxf}, we obtain the desired result \eqref{def: ranvec}.

Now we prove \eqref{segg}. Recall $\wt{H}^C$ and $H^S$ are independent, i.e.,  the positions of the nonzero elements of $E$ is independent of $H^S$. Then by symmetry, we can assume that for some $s, t\leq N^{5\e}$, among the matrix entries of $E$, only 
$$
E_{12}, \quad E_{21}, \quad E_{34}, \quad E_{43}\cdots, E_{2s-1,2s}, \quad E_{2s,2s-1}, \quad  E_{2s+1, 2s+1}, \quad  E_{2s+2, 2s+2}, \cdots  E_{2s+t, 2s+t}
$$
are non-zero. Then, we can decompose the $E$ as 
\be
E=VDV^*
\ee
where $D$ is a $(2s+t)\times (2s+t)$ diagonal matrix and $V$ is $N \times (2s+t)$. Furthermore, 
\be
D= {\rm diag}\{E_{12}, -E_{12},E_{34}, -E_{34},\cdots, E_{2s-1, 2s}, -E_{2s-1,2s}, E_{2s+1, 2s+1},  E_{2s+2, 2s+2},  \cdots  E_{2s+t, 2s+t} \}.
\ee
If $j=2n-1$, $n\le s$, then 
\be
V_{ij}=\delta_{i,2n-1}\frac{1}{\sqrt 2}+\delta_{i,2n }\frac{1}{\sqrt 2}
\ee
If $j=2n $, $n\le s$,  then 
\be
V_{ij}=\delta_{i,2n-1}\frac{1}{\sqrt 2}-\delta_{i,2n }\frac{1}{\sqrt 2}
\ee
If $j=2s+n $, $1\le n\le t$,  then 
\be
V_{ij}=\delta_{ij} 
\ee
Note that $E$ is symmetric matrix. Using Lemma 6.1 in \cite{KY}, we find that, if $\mu$ is the eigenvalue of $H^S+E$, then 
\be\label{tyyz}
\det \left(V^* G^S(\mu)V+D^{-1}\right)=0 \quad {\rm where} \quad G^S(\mu)= \frac{1}{H^S-\mu}
\ee
Similarly, if we let $\mu^\gamma$ be the eigenvalue of $H^S+\gamma E$, then 
\be
\det \left(V^* G^S(\mu)V+(\gamma D)^{-1} \right) =0 \quad {\rm where} \quad G^S(\mu)= \frac{1}{H^S-\mu}.
\ee

Define $(2s+t)\times (2s+t)$ matrix $F$ by
\be
F^\gamma := F^\gamma(\mu) := V^* G^S(\mu)V+(\gamma D)^{-1}.
\ee
Then, we have for the following $2\times 2$ blocks of $F^\gamma$, 
\be\label{FG1} \begin{split}
 &\left( \begin{matrix} F^\gamma_{2\al-1, 2\beta-1}&F^\gamma_{2\al-1, 2\beta}\\ F^\gamma_{2\al, 2\beta-1}&F^\gamma_{2\al, 2\beta } \end{matrix} \right)
 \\
 &=\frac{1}{2} \left( \begin{matrix} 1&1\\ 1&-1\end{matrix} \right) 
 \left( \begin{matrix} G^S_{2\al-1, 2\beta-1}&G^S_{2\al-1, 2\beta}\\ G^S_{2\al, 2\beta-1}&G^S_{2\al, 2\beta } \end{matrix} \right)
  \left( \begin{matrix} 1&1\\ 1&-1\end{matrix} \right)
  +\delta_{\al, \beta} \left( \begin{matrix} (\gamma E_{2\al-1, 2\al})^{-1}&0\\0 &-(\gamma E_{2\al-1, 2\al})^{-1}\end{matrix} \right)
\end{split} \ee
where $1\le \al, \beta\le s$. For the following  $2\times 1$ blocks of $F$, we get
\be\label{FG2}
 \left( \begin{matrix} F^\gamma_{2\al-1, \, \beta } \\ F^\gamma_{2\al,  \, \beta }  \end{matrix} \right)
 =\frac{1}{\sqrt 2} \left( \begin{matrix} 1&1\\ 1&-1\end{matrix} \right) 
 \left( \begin{matrix} G^S_{2\al-1, \, \beta } \\ G^S_{2\al,  \, \beta }  \end{matrix} \right)
 \ee
where $1\le \al \le s$, $2s+1\le \beta\le 2s+t$. Finally, for the following  $1\times 1$ blocks of $F$, 
\be\label{FG3}
 \left( \begin{matrix} F^\gamma_{ \al , \, \al }   \end{matrix} \right)
 = \left( \begin{matrix} G^S_{ \al , \, \al }   \end{matrix} \right)+(\gamma E_{\al, \al })^{-1}
  \ee
where  $2s+1\le \al \le 2s+t$. 

Let $\wt z=2+iN^{-2/3}$. From \eqref{Gij estimate} and \eqref{smallz}, with high probability, we have 
\be\label{928o}
\max_{1\leq i \leq 2s+t}|G^S_{ii}(\wt z)+1 |\leq N^{-\e/2}.
\ee
For off-diagonal terms, with \eqref{res: WBGij} and \eqref{esmallfake}, we have that
\be\label{928}
\max_{1\leq i\neq j\leq 2s+t}|G^S_{ij}(\wt z) |\leq N^{-1/6}
\ee
holds with probability $1-o(1)$. Define
\be
\wt\mu:=\lambda_N \pm N^{-3/4}.
\ee
From remark \eqref{genedge} and the fact that the largest eigenvalues of GOE are separated in the scale $N^{-2/3}$, we have 
\be
\p(\lambda_N-\lambda_{N-1}\ge 2N^{-3/4})\ge 1-o(1),
\ee
thus
\be\label{ttg}
\p(\min_{\al}|\lambda_\al- \wt\mu | \ge N^{-3/4})\ge 1-o(1).
\ee
With the distribution of $\lambda_N$ (see \eqref{qset1}), we also have with probability $1-o(1)$ that
\be\label{ttg2}
|\wt \mu-2|\leq N^{-2/3}\varphi^C.
\ee
Consider the identity
\be \begin{split}
\left|G^S_{ij}(\wt z)-G^S_{ij}(\wt\mu)\right|
&=\sum_\al \left|u_\al(i)u_\al(j)\right|\left|\frac{1}{\lambda_\al-\wt z}-\frac{1}{\lambda_\al-\wt\mu}\right|\\
&=\sum_\al \left|u_\al(i)u_\al(j)\right|\left(\frac{|\wt\mu-\wt z|}{|\lambda_\al-\wt z||\lambda_\al-\wt\mu|} \right).
\end{split} \ee
From \eqref{ttg}, \eqref{ttg2}, the delocalization in Lemma \ref{lem: old}, and the rigidity result in Theorem \ref{thm: new rigidity}, we have with probability $1-o(1)$ that 
\be\label{2ss}
\max_{i,j}| G^S_{ij}(\wt z)-G^S_{ij}(\wt \mu)|\leq CN^{-1/4}.
\ee
Combining \eqref{928o}, \eqref{928}, and \eqref{2ss}, we find that \eqref{928o} and \eqref{928} still hold with $\wt z$ replaced by $\wt \mu$ and the right hand side being doubled. From that $\max_{i,j}| E_{ij}|\leq 3/4$, together with \eqref{FG1}-\eqref{FG3}, we have for any $0 < \gamma \leq 1$ that
\be
\min_{i,\gamma} |F^\gamma_{ii}(\wt\mu)|\ge \frac13-  O(N^{-\e/2}), \quad \max_{i,j, \gamma} |F^\gamma_{ij}(\wt\mu)|\le  CN^{-1/6}+{\bf 1}_{|i-j|=1}CN^{-\e/2}
\ee
holds with probability $1-o(1)$. This implies that, since $\e$ is small enough, 
$$
\det F^\gamma(\wt\mu)\neq 0.
$$
Recall \eqref{tyyz}, then we know that  the following event holds with probability $1-o(1)$:  $\wt\mu$ is not the eigenvalue of $H^S+\gamma E$ for any $0 < \gamma \leq 1$. 

If we let $\mu^\gamma_N$ be the the largest eigenvalue of $H^S+\gamma E$ for $0 \leq \gamma \leq 1$, then by definition $\lambda_N=\mu^0_N$, since $\lambda_N$ is the largest eigenvalue of $H^S$. With the continuity of $\mu^\gamma_N$ with respect to the $\gamma$, we find that $\wt\mu$ is not the eigenvalue of $H^S$, hence we have that, for any $0 \leq \gamma \leq 1$,
\begin{equation}
\mu^\gamma_N \in (\lambda_N - N^{-3/4}, \lambda_N + N^{-3/4} )
\end{equation}
holds with probability $1-o(1)$. Thus, we have proved \eqref{segg}, which implies the desired result \eqref{def: ranvec}.   
\end{proof}

\section{Basic ideas for Theorem \ref{thm: new rigidity} and Theorem \ref{thm: new edge un}} \label{sec6}

The basic idea of proving Theorem \ref{thm: new rigidity} and Theorem \ref{thm: new edge un} is Green function comparison method, as mentioned in the introduction. To apply the comparison results, first we show that for any $H$ in Theorem \ref{thm: new rigidity}, there exists a matrix $\wt H$ whose entries have the same first four moments as those of $H$ and satisfies the bounded support condition with large $q=O(N^{1/2}/\log N)$. Roughly speaking, the $\wt H$ has the properties that we need to prove for $H$ and we will use Green function comparison method to show that $H$ has the same properties, since $H$ and $\wt H$ have the same first four moments.

\begin{lem}\label{lem: match}
For any generalized Wigner matrix $H$ under the assumptions of Theorem \ref{thm: new rigidity}, there exists another generalized Wigner matrix $\wt H$, such that $\wt H$ satisfies bounded support condition with $q=O(N^{1/2}/\log N)$ and the first four moments of the off-diagonal entries of $H$ and $\wt H$ match, i.e.,
\be
\E x^k_{ij}= \E \wt x^k_{ij} \; {  (i \neq j)}, \quad k=1,2,3,4
\ee
and the first two moments of the diagonal entries of $H$ and $\wt H$ match, i.e., $\E x^k_{ii}= \E \wt x^k_{ii}, \quad k=1,2$.
\end{lem}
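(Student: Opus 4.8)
The plan is to construct $\wt H$ entrywise by a moment-matching lemma. For each off-diagonal pair $(i,j)$ we need a random variable $\wt x_{ij}$, bounded by $q^{-1} = (\log N) N^{-1/2}$ up to an exponentially small probability, whose first four moments agree with those of $x_{ij}$. First I would recall the standard moment-matching construction (e.g. Lemma 6.5 in \cite{EYY}): given a target sequence of moments $m_1 = 0$, $m_2 = 1/N$, $m_3$, $m_4$ satisfying the admissibility constraints $m_2 > 0$ and $m_4 m_2 \geq m_3^2$ (plus a mild non-degeneracy bound), there is a random variable supported on at most three points, all of size $O(\sqrt{m_4/m_2})$, realizing exactly those four moments. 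The key point is that the moment assumptions in Theorem \ref{thm: new rigidity}, namely $\E|h_{ij}|^3 \leq C_1 N^{-3/2}$ and $\E|h_{ij}|^4 \leq C_1 (\log N) N^{-2}$, force $\sqrt{\E|h_{ij}|^4 / \E|h_{ij}|^2} = O(\sqrt{\log N}\, N^{-1/2})$, so the constructed variable is automatically bounded by $O((\log N)^{1/2} N^{-1/2}) \le q^{-1}$ with $q = O(N^{1/2}/\log N)$; in fact a bounded (three-point) support gives the bounded support condition trivially, with the exceptional probability being literally zero.

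The steps, in order, would be: (1) verify the admissibility inequality $\E|h_{ij}|^4 \,\E|h_{ij}|^2 \geq (\E h_{ij}^3)^2$ — this is just Cauchy–Schwarz, $(\E h_{ij}^3)^2 = (\E h_{ij} \cdot h_{ij}^2)^2 \le \E h_{ij}^2 \cdot \E h_{ij}^4$ in the real symmetric case; (2) apply the three-point moment-matching construction to produce $\wt x_{ij}$ with $\E \wt x_{ij}^k = \E x_{ij}^k$ for $k=1,2,3,4$ and with deterministic bound $|\wt x_{ij}| \le C (\E|x_{ij}|^4/\E|x_{ij}|^2)^{1/2} \le C\sqrt{\log N}$, hence $|\wt H_{ij}| = N^{-1/2}|\wt x_{ij}| \le q^{-1}$; (3) for the diagonal entries, match only the first two moments — here one can even just take $\wt x_{ii}$ to be a rescaled Bernoulli/two-point variable with mean zero and the prescribed second moment $\E|h_{ii}|^2 \le C_1/N$, again deterministically bounded by $O(N^{-1/2})$, so the bounded support condition is met; (4) declare the entries $\wt x_{ij}$ $(1 \le i \le j \le N)$ independent, which makes $\wt H$ a generalized symmetric Wigner matrix, and check that $|\E \wt h_{ij}^2 - N^{-1}| \le C_0 N^{-1}\delta_{ij}$ holds since $\E \wt h_{ij}^2 = \E h_{ij}^2$ by construction.

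I would also note that one has freedom in the construction: if $x_{ij}$ already has moments close to Gaussian one can take the matching variable supported on two or three points with support size controlled by $\max(\sqrt{\E x_{ij}^4}, |\E x_{ij}^3|^{1/3}, 1)$ normalized appropriately; the only thing that matters is that the fourth moment hypothesis caps this support at $O(\sqrt{\log N})$ after the $N^{-1/2}$ scaling. The main obstacle — really the only subtle point — is handling the degenerate or near-degenerate cases where the target moments lie on the boundary of the admissible region (e.g. $\E|h_{ij}|^4 \E|h_{ij}|^2$ close to $(\E h_{ij}^3)^2$, or $\E|h_{ij}|^4$ unusually small relative to $(\E|h_{ij}|^2)^2$): there the three-point construction can collapse and one must instead use a two-point (or even one-point, if the distribution is essentially deterministic after scaling) distribution, or perturb slightly. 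This is exactly the case analysis carried out in \cite{EYY}, Lemma 6.5, and I would invoke it, checking only that our hypotheses place us in the regime where that lemma applies and yields the stated bound on $q$.
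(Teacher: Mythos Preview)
Your approach is correct and essentially equivalent in spirit to the paper's, but the concrete construction differs. The paper does not invoke an external moment-matching lemma; instead it proves its own (the lemma immediately following), building the matching variable as a sum $x = X + Y_t$ of two independent pieces: a two-point variable $X$ that fixes the third moment, and a four-point symmetric variable $Y_t$ that tunes the fourth moment. That construction yields support of size $O(B)$ (linear in the fourth moment), which after the $N^{-1/2}$ scaling gives exactly $q^{-1} = O((\log N)\,N^{-1/2})$. Your three-point construction, with support $O(\sqrt{m_4/m_2})$, actually gives the slightly sharper $q^{-1} = O(\sqrt{\log N}\,N^{-1/2})$, more than enough for the stated $q = O(N^{1/2}/\log N)$. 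One minor point: your Cauchy--Schwarz check gives only $m_4 m_2 \ge m_3^2$, whereas the admissibility one really needs for the boundary case is the stronger $m_4 \ge m_2^2 + m_3^2/m_2$ (equivalently $B \ge 1 + A^2$ after normalization); this is the inequality in the paper's Remark, and it follows from $\E(x^2 - A x - 1)^2 \ge 0$. Other than that, your plan is complete and the diagonal and independence steps are handled correctly.
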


\begin{proof}[Proof of Lemma \ref{lem: match}]
The diagonal part is trivial. For the off-diagonal part, it clearly follows from the next lemma.
\end{proof}

\begin{lem}
For any $C>0$, if $C \le |A|$ and $B\ge A^2+1$, there exists a random variable $x$ such that 
\begin{equation}\label{tyss}
\E(x)=0, \quad \E(x^2)=1, \quad \E(x^3)=C, \quad \E(x^4)=B,
\end{equation}
and 
\begin{equation}\label{tyss2}
{\rm  supp(x)} \subset [-DB, DB]
\end {equation}
for some $D$ depending only on $C$. 
\end{lem}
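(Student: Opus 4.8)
The plan is to construct $x$ explicitly as a discrete random variable supported on a small number of points. The cleanest approach is to look for a distribution supported on three points, say $\{-a, 0, b\}$ with $a, b > 0$, together with an atom at $0$. A three-point distribution has three free parameters (two locations and one free probability, after imposing that the probabilities sum to $1$), which is exactly enough to match the three constraints $\E(x^2)=1$, $\E(x^3)=C$, $\E(x^4)=B$ once we also impose $\E(x)=0$; but $\E(x)=0$ is a fourth constraint, so in fact I would allow the atom at $0$ and use points $\{-a, b\}$ with probabilities $p$ and $1-p$ supplemented by mass at $0$, giving parameters $a, b, p$ and the mass $m_0$ at $0$ — four parameters for four equations. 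Concretely, set $x = -a$ with probability $p$, $x = b$ with probability $r$, and $x = 0$ with probability $1 - p - r$. Then the conditions \eqref{tyss} become $-ap + br = 0$, $a^2 p + b^2 r = 1$, $-a^3 p + b^3 r = C$, $a^4 p + b^4 r = B$.

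From the first equation $ap = br =: t$, so $p = t/a$, $r = t/b$. Substituting into the remaining equations gives $t(a+b) = 1$, $t(b^2 - a^2 \cdot \tfrac{... }{})$ — more cleanly: $a^2 p = ta$, $b^2 r = tb$, so the second equation is $t(a+b) = 1$; the third is $-a^3 p + b^3 r = t(b^2 - a^2) = C$; the fourth is $a^4 p + b^4 r = t(a^3 + b^3)$... let me recompute: $a^4 p = a^3 \cdot ap = a^3 t$, $b^4 r = b^3 t$, so the fourth equation is $t(a^3+b^3) = B$. Now I have three equations in $a, b, t$: $t(a+b)=1$, $t(b^2-a^2)=C$, $t(a^3+b^3)=B$. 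Dividing the second by the first gives $b - a = C$. Let $s = a+b$; then $t = 1/s$ and $b = (s+C)/2$, $a = (s-C)/2$, which requires $s > |C| \ge C \ge C$ (and we need $a>0$, so $s > |C|$). The fourth equation becomes $(a^3+b^3)/s = B$, i.e. $a^3 + b^3 = Bs$. Expanding $a^3+b^3 = (a+b)(a^2-ab+b^2) = s(s^2 - 3ab)$ and $ab = (s^2 - C^2)/4$, so $a^3+b^3 = s(s^2 - \tfrac34(s^2-C^2)) = s(\tfrac14 s^2 + \tfrac34 C^2)$. Thus the fourth equation reads $\tfrac14 s^2 + \tfrac34 C^2 = B$, giving $s^2 = 4B - 3C^2$. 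Since $B \ge A^2 + 1 \ge C^2 + 1$ (using $|A| \ge C$... actually we only know $C \le |A|$, so $C^2 \le A^2$, hence $B \ge A^2 + 1 \ge C^2 + 1$), we get $4B - 3C^2 \ge 4C^2 + 4 - 3C^2 = C^2 + 4 > C^2$, so $s = \sqrt{4B-3C^2} > |C|$, and therefore $a = (s-C)/2 > 0$, $b = (s+C)/2 > 0$. All the probabilities $p = t/a = 1/(sa)$, $r = 1/(sb)$ are positive, and I need $p + r \le 1$: indeed $p + r = \tfrac1s(\tfrac1a + \tfrac1b) = \tfrac{a+b}{sab} = \tfrac{1}{ab} = \tfrac{4}{s^2 - C^2} = \tfrac{4}{4B - 4C^2} = \tfrac{1}{B - C^2} \le 1$ since $B \ge C^2 + 1$. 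Finally the support is contained in $[-a, b] \subset [-s, s]$ and $s = \sqrt{4B - 3C^2} \le 2\sqrt{B} \le 2B$ (as $B \ge 1$), so $\mathrm{supp}(x) \subset [-2B, 2B]$, giving \eqref{tyss2} with $D = 2$ (independent of $C$, in fact universal).

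The verification is entirely elementary once the construction is in hand, so there is no serious obstacle; the only point requiring care is checking the inequalities $s > |C|$ and $p + r \le 1$, both of which follow from the hypothesis $B \ge A^2 + 1$ combined with $C \le |A|$. I would present the proof by simply defining $a, b, t, p, r$ via the formulas above, verifying $\E(x) = \E(x^3) - \text{(target)} = 0$ etc. by direct substitution, and closing with the support bound. One subtlety worth a remark: the problem states "for some $D$ depending only on $C$," but the construction actually yields a universal $D = 2$; I would either note this or just state the weaker conclusion as asked. An alternative, if one preferred a symmetric-looking argument, is to first handle $C = 0$ with a symmetric two-point-plus-atom distribution and then perturb, but the direct three-atom construction above is shortest and I would go with it.
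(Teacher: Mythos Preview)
Your proof is correct and is in fact cleaner than the paper's. The paper takes a different route: it writes $x$ as (essentially) a rescaled sum $X+Y_t$ of two independent pieces, where $X$ is a fixed two-point distribution carrying the third moment and $Y_t$ is a symmetric four-point family with $\E Y_t^2=1$, $\E Y_t^4=t$, and then tunes $t=4B-8A^2-2$ to hit the target fourth moment; small $B$ is handled separately by an ad hoc argument. Your construction instead solves the moment system directly with a single three-atom distribution on $\{-a,0,b\}$, obtaining closed-form values $s=a+b=\sqrt{4B-3C^2}$, $b-a=C$, $p+r=1/(B-C^2)$. This is shorter, avoids the case split on the size of $B$, and yields the stronger conclusion $\mathrm{supp}(x)\subset[-2B,2B]$ with a universal constant $D=2$ rather than one depending on $C$. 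The paper's decomposition has the minor conceptual advantage of separating the ``asymmetry'' from the ``kurtosis'', which could be useful if one wanted to match higher moments, but for the lemma as stated your direct approach is preferable.
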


\begin{rem}
The condition $B \ge A^2+1$ comes from the simple fact that if $\E(x)=0$ and $\E(x^2)=1$ then $\E (x^4) \ge 1+ (\E (x^3) )^2$.
\end{rem}

\begin{proof}
For fixed $A$ and any $B \leq k$, it is easy to find a distribution such that \eqref{tyss} and \eqref{tyss2} hold with $D$ depending on $k$. Therefore, one only needs to show that this lemma holds in the case that $B$ is large enough. To show that, first we introduce a family of random variables $Y_t (t \geq 1)$, whose distribution has a finite support
\begin{equation} \label{defYt}
{\rm supp(Y_t)} \in \left\{ -t, \frac{-\sqrt{t}}{\sqrt{1+t}}, \frac{\sqrt{t}}{\sqrt{1+t}}, t \right\}
\end {equation}
and satisfies
\be
\p(Y_t= t) = \p(Y_t= -t) = \frac{1}{2 t (-1 + t + t^2)} \, ,
\ee
\be
\p(Y_t= \frac{\sqrt{t}}{\sqrt{1+t}}) = \p(Y_t= \frac{-\sqrt{t}}{\sqrt{1+t}}) = \frac{1}{2} - \frac{1}{2 t (-1 + t + t^2)} \,.
\ee

One can easily check that every odd moment of $Y_t$ vanishes and 
\begin{equation} 
\E (Y_t)^2 = 1, \quad \E (Y_t)^4 = t.
\end {equation}
Note that $Y_t$ is supported in $[- t, t]$.

We choose another random variable $X$ whose distribution is supported on 
$$
\{ \sqrt{2}A - \sqrt{1 + 2A^2}, \sqrt{2}A + \sqrt{1 + 2A^2} \}
$$
and satisfies
\be
\p(X = \sqrt{2}A - \sqrt{1 + 2A^2}) = \frac{\sqrt{2}A + \sqrt{1 + 2A^2}}{2 \sqrt{1 + 2A^2}}, 
\ee
\be
\quad \p(X = \sqrt{2}A + \sqrt{1 + 2A^2}) = \frac{-\sqrt{2}A + \sqrt{1 + 2A^2}}{2 \sqrt{1 + 2A^2}}.
\ee
Then, simple calculation shows that
$$
\E X=0, \quad \E X^2=1, \quad \E X^3=2\sqrt 2A, \quad \E X^4= 8A^2 + 1.
$$

Choose
$$
t= 4B-8A^2-2
$$
and let
$$
x= X+Y_t \, .
$$
Since $X$ and $Y_t$ are independent, it can be easily check that $x$ satisfies \eqref{tyss} and \eqref{tyss2}, especially, for large enough $B$, ${\rm  supp(x)} \subset [-5B, 5B]$.
\end{proof}

Since $\wt H$ has the property we need (see Lemma \ref{lem: old}), we now compare $H$ with $\wt H$ using Green function comparison method.

To prove Theorem \ref{thm: new rigidity} and Theorem \ref{thm: new edge un}, we claim the following two lemmas first, which will be proved in the next section.

\begin{lem} \label{lem:p moment}
Let $H$ and $\wt H$ satisfy the assumptions of Lemma \ref{lem: match}. For $z\in S(C) $ in \eqref{defS} with large enough $C>0$, if for deterministic number  $X$ and $Y$, 
\be\label{asXY}
\max_{i\ne j}|\wt G_{ij}(z)|\leq X, \quad |\wt m-m_{sc}| \le Y
\ee
holds with $3$-high probability(see Def. \ref{def: hprob}), then for any $p\in 2\mathbb N$ with $p\leq \varphi$, we have 
\be\label{sqbm}
\E |m- m_{sc}|^p \leq \E | \wt m- m_{sc}|^p + (C p)^{Cp} \left(X^2+Y+\varphi N^{-1}\right)^{p}.
\ee
\end{lem}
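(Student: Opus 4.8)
\textbf{Proof proposal for Lemma \ref{lem:p moment}.}

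The plan is to run a Lindeberg-type telescopic replacement between $H$ and $\wt H$, swapping one pair of matrix entries at a time, and to track the $p$-th moment of $m - m_{sc}$ rather than a single observable. Fix the bijective ordering $\Phi$ on $\{(k,\ell): 1\le k\le \ell\le N\}$ as in the proof of Lemma \ref{lem: ex edge un}, and let $H_\gamma$ interpolate between $\wt H = H_0$ and $H = H_{\gamma_{\max}}$, changing only the $(a,b),(b,a)$ entries at step $\gamma$ with $\Phi(a,b)=\gamma$. Write $m^\gamma := \frac1N\tr (H_\gamma - z)^{-1}$ and telescope
\be
\E|m - m_{sc}|^p - \E|\wt m - m_{sc}|^p = \sum_{\gamma=1}^{\gamma_{\max}} \Big( \E|m^\gamma - m_{sc}|^p - \E|m^{\gamma-1} - m_{sc}|^p \Big),
\ee
so it suffices to bound each summand by $(Cp)^{Cp}(X^2 + Y + \varphi N^{-1})^p\, N^{-2}$ (off-diagonal) or $\cdot\, N^{-1}$ (diagonal). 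At step $\gamma$ both $H_\gamma$ and $H_{\gamma-1}$ differ from the ``reference'' matrix $Q$ (with $Q_{ab}=Q_{ba}=0$) by a rank-$\le 2$ perturbation of size $O(q^{-1}) = O(N^{-1/2}\log N)$; let $R := (Q-z)^{-1}$, and expand $(H_\gamma - z)^{-1}$ and $(H_{\gamma-1}-z)^{-1}$ by the resolvent identity to fifth order in the perturbation, e.g.
\be
\frac{1}{H_\gamma - z} = R - R V R + (RV)^2 R - (RV)^3 R + (RV)^4 R - (RV)^5 \frac{1}{H_\gamma - z},
\ee
where $V$ carries the entries $h_{ab}^{(\gamma)}$. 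This expresses $m^\gamma - m_{sc}$ as $m^R - m_{sc}$ plus a polynomial in the entry $h_{ab}$ of degree $\le 4$ with coefficients built from traces of products of $R$ and off-diagonal $R$-entries, plus a fifth-order remainder. The first four moments of $h_{ab}$ agree for $H_\gamma$ and $H_{\gamma-1}$ (Lemma \ref{lem: match}), so upon taking expectations the polynomial contributions up to order $4$ cancel between the two, and only the order-$5$-and-higher terms survive — each of which carries a factor $q^{-5} = O(N^{-5/2}(\log N)^5)$, amply beating the $N^{-2}$ (resp.\ $N^{-1}$) budget even after accounting for the $O(N)$ or $O(1)$ entries of $R$ that enter.

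The catch is that the $p$-th power mixes these terms: $|m^\gamma - m_{sc}|^p = |m^R - m_{sc} + \Delta|^p$ where $\Delta$ is the sum of resolvent-expansion corrections, so one must Taylor-expand $t \mapsto |m^R - m_{sc} + t\Delta|^p$ (or rather expand $(m^R - m_{sc} + \Delta)^{p/2}\overline{(\cdots)}^{p/2}$, recalling $p$ is even) to fourth order in $\Delta$ and control the fifth-order remainder. Here I would first establish the a priori bounds I need on the $R$-entries: the hypothesis \eqref{asXY} gives $\max_{i\ne j}|\wt G_{ij}| \le X$ and $|\wt m - m_{sc}|\le Y$ with $3$-high probability for $\wt H = H_0$, and by the same fifth-order resolvent expansion (using that $V$ has bounded support $q^{-1}$ and the trivial bound $|R_{ij}|\le \eta^{-1}\le N$, exactly as in the derivation of \eqref{eq:R_bound}) one propagates these to $\max_{i\ne j}|R_{ij}| \le 2X + N^{-1/3+o(1)}$ and $|m^R - m_{sc}|\le 2Y + N^{-1/3+o(1)}$, hence to all $H_\gamma$; crucially, on the complementary very-low-probability event one still has the deterministic bound $|m^\gamma - m_{sc}| \le C\eta^{-1} \le CN$, and since $p \le \varphi = (\log N)^{\log\log N}$, the product $N^{Cp}\cdot e^{-c\varphi^3}$ is negligible, so the bad event contributes nothing to the $p$-th moment estimate. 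Each term in the Taylor expansion of the $p$-th power is then a product of at most $p$ factors, each of which is either $|m^R - m_{sc}|\lesssim X^2 + Y + \varphi N^{-1}$-sized or a resolvent-expansion correction, and the combinatorial count of such terms is $(Cp)^{Cp}$ — this is where the $(Cp)^{Cp}$ prefactor in \eqref{sqbm} is born.

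The main obstacle is the bookkeeping in the last step: one must verify that after expanding the even power $p$, every surviving term (those not killed by the four-moment match) is bounded by $(Cp)^{Cp}(X^2 + Y + \varphi N^{-1})^p N^{-2}$ uniformly in $\gamma$, i.e. that each correction $\Delta$ genuinely contributes a gain of $q^{-1} \cdot$(size of an off-diagonal $R$-entry) $\lesssim N^{-1/2}(\log N)\cdot(X + N^{-1/3+o(1)})$ per power, small enough that five such factors (for the uncancelled fifth-order term) or the appropriate mix outpace $N^{-2}$ after summing over $\gamma$. The estimate $X^2 + Y + \varphi N^{-1} \ge \varphi N^{-1}$ is what absorbs the genuinely dangerous diagonal steps (where only $O(N^{-1})$ summation gain is available) and the terms where only the trace $m^R$ appears with no off-diagonal entry; one checks that in those terms the structure forces at least two powers of $X$ or one power of $Y$, matching the right side of \eqref{sqbm}. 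I expect the argument to be entirely parallel in spirit to the proof of Lemma \ref{lem: ex edge un} and to Theorem 6.3 of \cite{EYY}, with the novelty being only that we carry a general even power $p \le \varphi$ and that $q$ is allowed to be as small as $N^{\phi}$, so the resolvent expansion must be pushed to order five (rather than stopping at order four) to produce a term that both survives the moment matching and is summably small.
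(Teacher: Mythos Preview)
There is a genuine gap, and it stems from a confusion about which bounded-support parameter $q$ is in play. At step $\gamma$ of the telescoping, the entry being inserted on the $H_\gamma$ side is an entry of $H$, not of $\wt H$; its support bound is $|v_{ab}|\le N^{-\phi}$ with $\phi$ possibly tiny, not $N^{-1/2}\log N$. Hence a fifth-order resolvent remainder $(RV)^5S$ is only $O(N^{-5\phi})$ pointwise, which for small $\phi$ is nowhere near the $N^{-2}$ budget needed to sum over $\gamma_{\max}\sim N^2$ steps. Even using the moment assumptions of Theorem~\ref{thm: new rigidity}, one gets $\E|v_{ab}|^5\le (\log N)N^{-2-\phi}$, so the gain over $N^{-2}$ is merely $N^{-\phi}$ per step, not $N^{-1/2}$. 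A single fifth-order expansion is not enough; this is exactly why the paper expands to order $\xi\sim \zeta/\phi$ (see \eqref{defxin}) and, more importantly, \emph{iterates} the comparison.

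The second gap is the claimed propagation of the hypothesis \eqref{asXY} from $\wt G=G^0$ to $R$ and to all $G^\gamma$. A one-step resolvent expansion changes off-diagonal entries by $O(N^{-\phi})$, and iterating this over $N^2$ steps destroys any bound. Alternatively, the local semicircle law \eqref{Gij estimate} applied directly to $H_\gamma$ (which has bounded support only $q=N^{\phi}$) yields $\max_{i\ne j}|G^\gamma_{ij}|\le \varphi^C N^{-\phi}$, which in the intended applications is much \emph{larger} than $X$ (e.g.\ $X\sim N^{-1/3}$ near the edge). So you simply do not have $|R_{ij}|\lesssim X$ available, and without it your term-by-term estimate of the Taylor expansion of the $p$-th power cannot produce the factor $(X^2+Y+\varphi N^{-1})^p$.

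The paper's remedy is the mechanism sketched in the Example preceding Lemma~\ref{lem:S_expansion} and encoded in \eqref{HHtt3}--\eqref{bssm}: the surviving order-$\ge 5$ terms at step $\gamma$ are functionals of $S=G^\gamma$, and rather than bounding them using (unavailable) a priori bounds on $S$, one applies the whole telescopic comparison \emph{again} to those functionals, thereby transferring them back to $\wt G=G^0$, where \eqref{asXY} does hold. Each iteration costs only an extra factor $N^{-\phi/20}$ and a controlled combinatorial factor; after $O(\phi^{-1})$ iterations the remaining error is $O(N^{-\zeta})$. The $(Cp)^{Cp}$ prefactor then arises from counting the products generated by the operators $\wt{\mathcal P}_{\gamma,\bf k}$, not from a Taylor expansion of $|\cdot|^p$. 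Your proposal, as written, never invokes this iteration and therefore cannot close for small $\phi$.
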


\begin{lem} \label{lem:green}
Let $H$ and $\wt H$ satisfy the assumptions of Lemma \ref{lem: match}. Let $F: \mathbb{R} \to \mathbb{R}$ be a function whose derivatives satisfy
\begin{equation}\label{jhw2}
\sup_x | F^{(\al)} (x) | (1+ |x|)^{-C_1} \leq C_1, \quad \al = 1, 2, 3,
\end{equation}
with some constant $C_1 > 0$. Then, there exists a constant $\widetilde{\epsilon} > 0$, depending only on $C_1$, such that, for any $\epsilon < \widetilde{\epsilon}$ and for any real numbers
\begin{equation}\label{qglm}
E, E_1, E_2 \in \{ x : |x-2| \leq N^{-2/3 + \epsilon} \} =: I_{\epsilon},
\end{equation}
and setting $\eta := N^{-2/3 - \epsilon}$, then
\begin{equation}\label{mainpear}
\left| \E \: F \left( \eta^2 \sum_{i, j} G _{ij} \overline{G _{ij}} (z)\right)
 - \E \: F \left( \eta^2 \sum_{i, j} \wt G _{ij} \overline{\wt G _{ij}} (z)\right) \right| \leq N^{-\phi + C\epsilon}, \quad z=E+i\eta
\end{equation}
and for $E_1, E_2\in [2-N^{-2/3+\e}, 2+N^{-2/3+\e}]$, 
\begin{equation}\label{sbc22}
\left| \E F \left( N \int_{E_1}^{E_2} dy \: \im m (y + i \eta) \right) - \E  F \left( N \int_{E_1}^{E_2} dy \: \im \wt m (y + i \eta) \right) \right| \leq N^{-\phi + C \epsilon}.
\end{equation}
\end{lem}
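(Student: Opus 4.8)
The plan is to compare $H$ with the auxiliary matrix $\wt H$ of Lemma \ref{lem: match} by the Green function (Lindeberg) comparison scheme of \cite{EYY1, EYY}, replacing the entries one at a time as in the proof of Theorem 6.3 of \cite{EYY}. The point of $\wt H$ is that it satisfies the bounded support condition with the \emph{large} value $q = O(N^{1/2}/\log N)$, so the full strength of Lemma \ref{lem: old} and Lemma \ref{lem: WBGij} is available for $\wt H$; in particular, for $z = E + i\eta$ with $|E-2|\le N^{-2/3+\e}$ and $\eta = N^{-2/3-\e}$, one has with high probability $\max_{i\ne j}|\wt G_{ij}(z)|\le \varphi^C N^{-1/3+C\e}$ and $|\wt m(z)-m_{sc}(z)|\le \varphi^C N^{-1/3+\e}$, and hence $\eta^2\sum_{i,j}|\wt G_{ij}(z)|^2 = N\eta\,\im\wt m(z)\le \varphi^{C}$. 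The first step is to manufacture the same a priori control for $H$: feeding the two displayed bounds for $\wt H$ into Lemma \ref{lem:p moment} as $X$ and $Y$, taking $p=\varphi$, and applying Markov's inequality yields $|m(z)-m_{sc}(z)|\le \varphi^{C}N^{-1/3+\e}$, and therefore $\eta^2\sum_{i,j}|G_{ij}(z)|^2 = N\eta\,\im m(z)\le \varphi^{C}$, with high probability on the same domain. With $\eta^2\sum_{i,j}|G_{ij}|^2$ bounded by $\varphi^C$ for both ensembles, \eqref{jhw2} also gives $|F^{(\al)}(\eta^2\sum_{i,j}|G_{ij}|^2)|\le \varphi^{C}$ with high probability; this is the only point at which the smallness of $q$ for $H$ would otherwise be felt, and it has now been removed.

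Next I would fix an ordering $\Phi$ of the independent entries as in \eqref{eq:ordering}, let $H_\gamma$ interpolate between $H_0 = \wt H$ and $H_{\gamma_{\max}}=H$, and write the left side of \eqref{mainpear} as the telescopic sum $\sum_\gamma\big(\E F(\eta^2\sum_{i,j}|S_{ij}|^2) - \E F(\eta^2\sum_{i,j}|T_{ij}|^2)\big)$, where $S,T$ are the Green functions of $H_\gamma,H_{\gamma-1}$, which differ only at the pair $(a,b)=\Phi^{-1}(\gamma)$, and $R$ is the Green function of the matrix with that pair set to zero (hence independent of the swapped entry). Expanding $S$ and $T$ in powers of the swapped entry by the resolvent expansion and then Taylor-expanding $F$, each summand becomes $\sum_{k\ge1}(\E h_{ab}^k - \E\wt h_{ab}^k)\,\E\Psi_k(R) + (\text{remainder of order }\ge 5)$, where $\Psi_k(R)$ is a polynomial in the entries of $R$ and in the values $F^{(\al)}(\eta^2\sum_{i,j}|R_{ij}|^2)$, whose free index sums, once rewritten via the resolvent identity $RR^*=(R-R^*)/(2i\eta)$ and the delocalization bound of Lemma \ref{lem: old}, are of size $\le \varphi^C N^{-1/3+\e}$ (off-diagonal factors supplying the extra powers of $N^{-1/3+\e}$). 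By Lemma \ref{lem: match} the terms with $k\le4$ vanish (for the $O(N)$ diagonal swaps only $k\le2$ is needed, and there the extra factor $\E|h_{ii}|^2\le C_1/N$ makes the contribution of lower order), so only $k\ge5$ survives, and $\E|h_{ab}|^k,\E|\wt h_{ab}|^k\le q^{-(k-4)}C_1(\log N)N^{-2}$. Summing the $\gamma_{\max}=O(N^2)$ swaps gives a bound of order $N^{2}\cdot q^{-1}(\log N)N^{-2}\cdot\varphi^C N^{-1/3+\e}\le N^{-\phi+C\e}$, which is \eqref{mainpear}. For \eqref{sbc22} one writes $N\int_{E_1}^{E_2}\im m(y+i\eta)\,dy = \eta^{-1}\int_{E_1}^{E_2}\big(\eta^2\sum_{i,j}|G_{ij}(y+i\eta)|^2\big)\,dy$, applies the same swap estimate under the integral sign, and pays an extra factor $\eta^{-1}\cdot2N^{-2/3+\e}=2N^{2\e}$ for integrating over a window of length $\le 2N^{-2/3+\e}$, which stays within the budget $N^{-\phi+C\e}$.

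The genuinely delicate step is the first one, namely Lemma \ref{lem:p moment}, which cannot be obtained from a single run of the comparison scheme. When one expands $|m-m_{sc}|^p$ around the swapped entry, the surviving order-$\ge5$ remainder carries a factor $|m-m_{sc}|^{p-1}$, and for small $q$ the only a priori bound available for $H$ (from Lemma \ref{lem: old}, of size $\varphi^C q^{-1}$) is too weak to absorb the error into the right side of \eqref{sqbm}; bounding it naively reproduces a functional of $G$ of the same shape but one slot richer, still only polynomially controlled for $H$. The resolution is to \emph{iterate}: instead of bounding the new functional, keep it, note that it is genuinely small for the reference ensemble $\wt H$ (by Lemma \ref{lem: old}), and compare its expectation between $H$ and $\wt H$ again by the same scheme, each round extracting a further small power of $N$ (of size at least a fixed multiple of $\e$); after $O(\e^{-1})$ rounds the accumulated gain defeats both the combinatorial factor $(Cp)^{Cp}$ and the polynomial a priori size of the last functional, the base case being $\wt H$, where Lemma \ref{lem: old} supplies everything. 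I expect the main obstacle to be exactly the control of this recursion: the number of terms produced by $O(\e^{-1})$ nested bounded-order expansions grows like a constant raised to the power $O(\e^{-1})$, which forces the per-round gain to be a power of $N$ rather than merely a power of $\varphi$; one must also verify that every functional appearing along the way has a uniform polynomial a priori bound, and always rewrite free index sums as resolvent products and traces rather than applying Cauchy--Schwarz pointwise, which for small $q$ would be lossy. The extension of Lemma \ref{lem:p moment} to the full domain \eqref{defS}, and the treatment of the diagonal swaps, are routine once this mechanism is in place.
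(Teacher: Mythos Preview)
Your architecture is right, and your identification of Lemma \ref{lem:p moment} as the place where the genuine iteration lives is exactly correct. There is, however, one point in the single-pass argument for \eqref{mainpear} that deserves care.

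You write that the functionals $\Psi_k(R)$ arising from the resolvent/Taylor expansion are of size $\varphi^C N^{-1/3+\e}$, with ``off-diagonal factors supplying the extra powers of $N^{-1/3+\e}$''. For the \emph{intermediate} matrices $H_\gamma$ this cannot be obtained from individual off-diagonal entries: each $H_\gamma$ only has bounded support with the small $q=N^{\phi}$, so \eqref{Gij estimate} yields $|R_{ij}|\le \varphi^C N^{-\phi}$ for $i\neq j$, and likewise $\im R_{ii}\le \varphi^C N^{-\phi}$ when $\phi<1/3$. If you feed only these entrywise bounds (plus $RR^*=(R-R^*)/(2i\eta)$) into, say, $\eta^2\sum_{i,j}R_{ia}R_{bj}\overline{R_{ij}}$, you end up with $\varphi^C N^{-\phi}$, and then the third-order Taylor remainder $\E|x^S-x^R|^3$ summed over $O(N^2)$ swaps is $O(N^{1/2-3\phi})$, which blows up for small $\phi$. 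So if ``off-diagonal factors'' means individual entries of $R$, there is a genuine gap.

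The paper closes this by running the iteration \emph{once more inside} the proof of Lemma \ref{lem:green}: the quantities $\eta^2\sum_{i,j}\mathcal P_{\gamma,{\bf k}}(R_{ij}\overline{R_{ij}})$ are themselves compared back to $\wt G$ via the machinery \eqref{HHtt3}, and only for $\wt G$ is the $N^{-1/3+\e}$ off-diagonal bound used. There is, however, a direct route that makes your single-pass argument work and avoids this second iteration: write the free index sums spectrally and use delocalization \eqref{deloc} (valid for every $H_\gamma$) together with $\im m^\gamma\le \varphi^C N^{-1/3+\e}$, which you already have from Lemma \ref{lem:p moment} via \eqref{tby4} applied to $H_\gamma$. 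For instance
\[
\Bigl|\eta^2\sum_{i,j}R_{ia}R_{bj}\overline{R_{ij}}\Bigr|
=\eta^2\Bigl|\sum_\alpha\frac{u_\alpha(a)u_\alpha(b)}{(\lambda_\alpha-z)^2(\overline{\lambda_\alpha-z})}\Bigr|
\le \frac{\varphi^C}{N}\sum_\alpha\frac{\eta^2}{|\lambda_\alpha-z|^3}
\le \varphi^C\,\im m^\gamma\le \varphi^C N^{-1/3+\e},
\]
and the same mechanism handles all $\mathcal P_{\gamma,{\bf k}}\,x^R$ with ${\bf k}\neq 0$. With this in hand your counting goes through verbatim (first- and second-order Taylor terms give $N^{-\phi-1/3+C\e}$ after summing, and the third-order remainder gives $N^{-1/2+C\e}$). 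So the gap is only in the justification, not in the conclusion; if you replace ``off-diagonal factors'' by ``delocalization plus the averaged bound $\im m^\gamma\le N^{-1/3+\e}$'', your single-pass proof of Lemma \ref{lem:green} is valid and in fact slightly shorter than the paper's, which re-uses the full iteration at this step.
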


\begin{proof}[Proof of Lemma \ref{thm: new rigidity}] 
Recall that, with \eqref{sjsy}, we have that for some large $C$, with high probability, 
\be\label{tby}
H\le 2+\varphi^{C}(N^{-2/3}+q^{-2}).
\ee
First, we improve this result to that 
\be\label{tby2}
H\le 2+\varphi^{C}N^{-2/3}
\ee
holds with high probablity. Let $\wt H$ match $H$ in the sense of Lemma \ref{lem: match}. For $a \ge 12$, with \eqref{3737} and \eqref{3838}, for $z=E+i\eta$ satisfying
\be \label{3737T}
2+\varphi^CN^{-2/3}\leq E\leq2+q^{-1}, \quad \eta=\varphi^{(3+a)} N^{-1}\kappa^{-1/2},
\ee
we have with 3-high probability
\be\label{3838T}
|\wt m-m_{sc}|\leq \frac{1}{\varphi^aN\eta }, \quad \im \wt m(z)\leq \frac{1}{\varphi^aN\eta },  \quad \max_{i\ne j}|\wt G_{ij}|\leq \frac{1}{\varphi^{a/3} N \eta}.
\ee
Assume that $C$ in \eqref{3737T} is greater than $8+4a$. Then, $\varphi^{2+a}\eta\leq \kappa$. With the property of $m_{sc}$ in \eqref{esmallfake}, we have $\im m_{sc}\sim \eta/\sqrt\kappa$, which implies 
\be\label{papk}
\im m_{sc}(z)\leq \frac{1}{\varphi N\eta}.
\ee
Now, we apply Lemma \ref{lem:p moment} on $H$ and $\wt H$ with $z$ in \eqref{3737T}, $p=\varphi$, $X=(\varphi^{a/3} N \eta)^{-1}$ and $Y=(\varphi^{a} N \eta)^{-1}$. Then, with \eqref{sqbm} and Markov inequality, for some $C>0$, we obtain with high probability that
\be\label{stmp}
|m- m_{sc}|\leq \varphi^C\left(  \frac{1}{\varphi^aN\eta }+  \frac{1}{\varphi^{2a/3}(N\eta)^2 } + N^{-1}\right).
\ee
From \eqref{3737T}, we know 
\be
\varphi^{O(1)} N^{1/3}\leq (N\eta)^{-1}\leq \varphi^{O(1)}N^{-\phi /2}.
\ee
Inserting it into \eqref{stmp} and choosing $a=C+1$ in \eqref{stmp}, we obtain $ |m- m_{sc}|\ll (N\eta)^{-1}$. With \eqref{papk}, it implies that 
$\im m(z)\ll (N\eta)^{-1}$ holds with high probability. By definition, 
$$
\im m= N^{-1} \sum_\al \eta ((\lambda_\al-E)^2+\eta^2)^{-1}.
$$
Then $\im m(z)\ll (N\eta)^{-1}$ implies that there are no eigenvalues in the interval $[E-\eta, E+\eta]$. Since it holds for any $z$ in \eqref{3737T} with high probability, we obtain that there are no eigenvalues in $[2+\varphi^C N^{-2/3}, 2+q^{-1}]$. Together with \eqref{tby}, we obtain \eqref{tby2}. By symmetry, we have 
\be\label{tby3}
\|H\|\le 2+\varphi^{C}N^{-2/3}.
\ee

Next, we apply Lemma \ref{lem:p moment} again on $H$ and $\wt H$ with $z$ in \eqref{scm}-\eqref{Gij estimate}, $p=\varphi$ and 
$$
X= \varphi^C \sqrt{\frac{\im m_{sc}}{N\eta}}+\frac{\varphi^C}{  N \eta}, \quad Y=\frac{\varphi^C}{  N \eta}.
$$
where $X, Y$ follows from \eqref{scm}-\eqref{Gij estimate}. Then, with \eqref{sqbm} and Markov inequality, we have that for some $C=O(1)$,  
\be\label{tby4}
\max_{z\in S(C)}|m(z)-m_{sc}(z)|\leq \varphi^C(N\eta)^{-1}
\ee 
holds with high probability. Following the argument of section 5 in \cite{EYY}, which was used to prove the (2.25) and (2.26) in \cite{EYY}, we obtain Lemma \ref{thm: new rigidity}. Note that we can almost take the varbatim except some coeffiencts, where $\varphi^C$ plays the role of $T_N$ there.  
\end{proof}

\begin{proof}[Proof of Theorem \ref{thm: new edge un}]
We first recall the following lemma which is basically proved in \cite{EYY}.

\begin{lem}\label{lem: edge un str}
Suppose that two generalized Wigner matrices $H^{\w}$ and $H^\V$ satisfy with high probability that \eqref{rigiditynew}, \eqref{rigiditynew2}, \eqref{tby4}, and
\begin{equation} \label{hh1}
|N^{2/3} (\lambda_N - 2) | \leq \varphi^C
\end{equation}
and the number of eigenvalues in $[2 -  2 \varphi^C N^{-2/3}, 2 +  2 \varphi^C N^{-2/3}]$ is bounded as follows:
\begin{equation}\label{hh2}
\N \left( 2 - \frac{2 \varphi^C}{N^{2/3}}, 2 + \frac{2 \varphi^C}{N^{2/3}} \right) \leq \varphi^{2C} 
\end{equation}
for some constant $C$. If, moreover, they satisfy the conditions from \eqref{jhw2} to \eqref{sbc22}, then there exists a constant $\delta$ such that, for any $s>0$,
\begin{equation} \label{qset15}
\p^{\w} \Big( N^{2/3} (\lambda_N -2) \leq s - N^{-\delta} \Big) - N^{-\delta} \leq \p^{\V} \big( N^{2/3} (\lambda_N -2) \leq s \big) \leq \p^{\w} \Big( N^{2/3} (\lambda_N -2) \leq s + N^{-\delta} \Big) + N^{-\delta}.
\end{equation}
\end{lem}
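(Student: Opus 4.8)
The plan is to adapt the edge universality argument of \cite{EYY} essentially verbatim, with the present hypotheses supplying the inputs that were produced there under the subexponential‑decay assumption. Concretely, one realizes the indicator $\mathbf 1_{\{\lambda_N\le E\}}$ — up to errors that can be absorbed into an $N^{-\delta}$ shift of $E$ — as a smooth function of $\im m$ integrated over a window of size $\sim N^{-2/3+\e}$ at the right spectral edge, and then applies the Green function comparison estimate \eqref{sbc22} to that smooth function to replace $H^{\V}$ by $H^{\w}$. The ingredients used are the rigidity bounds \eqref{rigiditynew}--\eqref{rigiditynew2}, the a priori bound \eqref{hh1} on $\lambda_N$, the counting bound \eqref{hh2}, the local law \eqref{tby4} at mesoscopic scales, and the comparison estimates \eqref{jhw2}--\eqref{sbc22}.

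Fix $s>0$; we may assume $s\le\varphi^C$, since for larger $s$ the estimate \eqref{qset15} is immediate from \eqref{hh1}. Set $E_1:=2+sN^{-2/3}$, $E_2:=2+2\varphi^C N^{-2/3}$ and $\eta:=N^{-2/3-\e}$ with $\e>0$ small, so that $|E_1-2|$ and $|E_2-2|$ are at most $N^{-2/3+\e}$ and $\eta$ is the scale in \eqref{sbc22}. By \eqref{hh1}, applied to both ensembles, with high probability $\lambda_N\le 2+\varphi^C N^{-2/3}$ and there is no eigenvalue above $E_2$, so $\{\lambda_N\le E_1\}=\{\N(E_1,E_2)=0\}$ up to an event of probability $\le N^{-\delta}$. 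Fix a smooth $\chi:\mathbb R\to[0,1]$ with $\chi\equiv1$ on $(-\infty,1/3]$ and $\chi\equiv0$ on $[2/3,\infty)$, so that $\chi$, hence $x\mapsto\chi(x/\pi)$, satisfies \eqref{jhw2}, and put
\[
\mathcal X:=\frac{N}{\pi}\int_{E_1}^{E_2}\im m(y+i\eta)\,dy=\frac1\pi\sum_j\int_{E_1}^{E_2}\frac{\eta}{(\lambda_j-y)^2+\eta^2}\,dy .
\]
The core step, carried out in \cite{EYY}, is to show that on an event of probability $1-O(N^{-\delta})$ one has the sandwich
\[
\mathbf 1_{\{\,\lambda_N\le 2+(s-N^{-\delta})N^{-2/3}\,\}}\ \le\ \chi(\mathcal X)\ \le\ \mathbf 1_{\{\,\lambda_N\le 2+(s+N^{-\delta})N^{-2/3}\,\}} .
\]
For the left inequality one checks that, when $\lambda_N\le 2+(s-N^{-\delta})N^{-2/3}$, every eigenvalue lies at distance $\gg\eta$ below $E_1$; estimating the contribution of each $\lambda_j$ to $\mathcal X$ by a dyadic split — using \eqref{hh2} for the $O(\varphi^{2C})$ eigenvalues in the critical window and \eqref{rigiditynew}--\eqref{rigiditynew2} (together with \eqref{tby4}) for the bulk tail — one gets $\mathcal X=o(1)<1/3$, hence $\chi(\mathcal X)=1$. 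For the right inequality, when $\lambda_N\ge 2+(s+N^{-\delta})N^{-2/3}$, by \eqref{hh1} the eigenvalue $\lambda_N$ lies inside $[E_1,E_2]$ at distance $\gg\eta$ from both endpoints, so it alone contributes $1-o(1)$ to $\mathcal X$; thus $\mathcal X>2/3$ and $\chi(\mathcal X)=0$.

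Taking expectations in the sandwich, separately for $H^{\w}$ and for $H^{\V}$, gives for both ensembles
\[
\p\big(\lambda_N\le 2+(s-N^{-\delta})N^{-2/3}\big)-N^{-\delta}\ \le\ \E\,\chi(\mathcal X)\ \le\ \p\big(\lambda_N\le 2+(s+N^{-\delta})N^{-2/3}\big)+N^{-\delta} .
\]
On the other hand, \eqref{sbc22} applied with $F(x)=\chi(x/\pi)$ gives $|\E^{\w}\chi(\mathcal X^{\w})-\E^{\V}\chi(\mathcal X^{\V})|\le N^{-\phi+C\e}$, where $\mathcal X^{\w},\mathcal X^{\V}$ are built from $m^{\w},m^{\V}$. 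Chaining these inequalities, relabelling $s$ and the numerical constants, and choosing $\e$ small and then $\delta$ appropriately so that $N^{-\phi+C\e}\le N^{-\delta}$ and $\delta<\e$, one obtains \eqref{qset15} with $\delta>0$ depending only on $\phi$ and on the exponents entering the sandwich.

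The main obstacle is the verification of the sandwich, that is, the precise control of $\mathcal X-\N(E_1,E_2)$ — the ``leakage'' from eigenvalues outside $[E_1,E_2]$ plus the boundary deficit — when $\lambda_N$ is within $\sim N^{-2/3}$ of $E_1$. The regularization scale $\eta=N^{-2/3-\e}$ lies below the typical eigenvalue spacing at the edge, so the smoothed counting function $\mathcal X$ genuinely cannot resolve eigenvalues within $\sim N^{-2/3}$ of $E_1$; it is exactly this, together with the bound \eqref{hh2} on the number of eigenvalues in that window, that forces the two‑sided $\pm N^{-\delta}$ corrections in \eqref{qset15}, which are then handled through the monotonicity of $\N(E_1,E_2)$ in $E_1$. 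All of these estimates are carried out in \cite{EYY}; here $\varphi^C$ plays the role of the quantity $T_N$ there, and no ingredient beyond the stated hypotheses is needed.
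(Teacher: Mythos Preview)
Your proposal is correct and follows essentially the same approach as the paper: the paper's own proof simply observes that this lemma is the structural skeleton of the edge universality proof in \cite{EYY} (Section~6 there), pointing out that the hypotheses \eqref{rigiditynew}, \eqref{rigiditynew2}, \eqref{tby4}, \eqref{hh1}, \eqref{hh2} are exactly the inputs (2.19), (2.25), (2.26), (6.2), (6.3) used in \cite{EYY} to reduce \eqref{qset15} to the Green function comparison statement \eqref{jhw2}--\eqref{sbc22}. You have spelled out that reduction --- the smoothed counting function $\mathcal X$, the sandwich between shifted indicators, and the application of \eqref{sbc22} to $F=\chi(\cdot/\pi)$ --- in somewhat more detail than the paper does, but the route is the same.
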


\begin{proof}[Proof of Lemma \ref{lem: edge un str}]
Though this lemma is not explicitly stated in \cite{EYY}, this is the basic structure of proving the edge universality, Theorem 2.4, in \cite{EYY}. In the section 6 of \cite{EYY}, the edge universality problem was converted into proving Theorem 6.3 in \cite{EYY}. The conditions from \eqref{jhw2} to \eqref{sbc22} in this paper are exactly the same as Theorem 6.3 in \cite{EYY}.  To obtain this conversion, in section 6 of \cite{EYY}, only the assumptions in Lemma \ref{lem: edge un str} of this paper was used. To help readers compare, we note that they are (2.19), (2.25), (2.26), (6.2), and (6.3) in \cite{EYY}.
\end{proof}
 
Now, we return to prove Theorem \ref{thm: new edge un}. We apply Lemma \ref{lem: edge un str} with $H=H^{\bf v}$ and $\wt H=H^{\bf w}$. Clearly, it only remains to check \eqref{hh1} and \eqref{hh2}. One can see that it follows from \eqref{rigiditynew}, the rigidity of eigenvalues, and that $\gamma_N=2$. Then, with Lemma \ref{lem: edge un str}, 
\begin{equation}  
\p \Big( N^{2/3} (\wt\lambda_N -2) \leq s - N^{-\delta} \Big) - N^{-\delta} 
\leq \p \big( N^{2/3} (\lambda_N -2) \leq s \big)
\leq \p \Big( N^{2/3} (\wt\lambda_N -2) \leq s + N^{-\delta} \Big) + N^{-\delta},
\end{equation}
where $\wt\lambda_N$ denotes the largest eigenvalue of $\wt H$. Furthermore, since $\wt H$ satisfies the bounded support condition with large $q\sim N^{1/2}/\log N$, with Lemma \ref{lem: ex edge un}, we obtain \eqref{qset1} and complete the proof of Lemma \ref{thm: new edge un}.  
\end{proof}

\section{Proof of Lemma \ref{lem:p moment}, Lemma \ref{lem:green}, and Lemma \ref{lem: WBGij}} \label{sec7}
 
To prove Lemmas \ref{lem:p moment} and \ref{lem:green}, we will again use Green function comparison method. Recall the notations in \eqref{eq:ordering}-\eqref{eq:green_fns} with $\wt H=H_0 = H^{\w}$ and $H=H_{\gamma_{\max}} = H^{\V}$. We let $S=G^\gamma=(H_\gamma-z)^{-1}$, $R=(Q-z)^{-1}$, and $T=G^{\gamma-1}=(H_{\gamma-1}-z)^{-1}$, where $Q$ depends on $\gamma = \Phi(a,b)$. Note that $H_\gamma$ satisfies the bounded support condition with $q=N^{-\phi}$ for all $1\leq \gamma \le \gamma_{\max}$.

From Lemma \ref{lem: old} and \eqref{8800}, we know that there exists a uniform constant $C_0>0$ such that, with $3-$high probability,
\be\label{813}
\max_{z\in S(C_0)}\max_\gamma \max_{i,j}\max\{|S_{ij}(z)|, |R_{ij}(z)|, |T_{ij}(z)|\}\leq 2, 
\ee 
where we used the bound $|m_{sc}|\leq 1$. We note that the uniformity here is easy to check, since there are only finitely many different distributions for all the matrix entries of $H_\gamma$, $1\leq \gamma\leq \gamma_{\max}$. On the other hand, $S$, $R$, and $T$ satisfy the following trivial bound that always holds:
\be\label{813tr}
\max_{z\in S(C_0)}\max_\gamma \max_{i,j}\max\{|S_{ij}(z)|, |R_{ij}(z)|, |T_{ij}(z)|\}\leq \eta^{ -1} \leq N  .
\ee
In this section, for simplicity, we use the notation $|{\bf k}|=\|{\bf k}\|_1$ for any vector $k=\mathbb R^n$, $n\in \mathbb N$.

To illustrate the idea of Green function comparison method, we first consider the following simple example of finding a bound on $\E G_{ij}$ from an a priori bound on $\wt G_{ij}$:

\begin{ex}
Suppose that the bound
\be \label{eq:apriori}
\max_{i \neq j} |\wt G_{ij}| \leq N^{-1/3}
\ee
holds. Then, we have that $|\E G_{ij}| \leq C N^{-1/3}$.
\end{ex}

Applying the replacement strategy, we obtain
\be\label{gzb}
\E G_{ij} = \E \wt G_{ij} + \sum_{\gamma = 1}^{\gamma_{\max}} \left( \E G^{\gamma}_{ij} - \E G^{\gamma -1}_{ij} \right).
\ee
Note that the bound \eqref{eq:apriori} implies that $|\E \wt G_{ij}| \leq N^{-1/3}$. Thus, if we can prove that for $1\leq \gamma\leq \gamma_{\max}$
\be
\left| \E S_{ij} - \E T_{ij} \right| \leq C N^{-7/3},
\ee
then this will show the desired estimate on $|\E G_{ij}|$.

We now expand $S_{ij}$ in terms of $R$ and $V$, as in \eqref{8800},  using the resolvent expansion
$$
S = R - RVR + (RV)^2 R - (RV)^3 R + (RV)^4 R + \cdots + (-1)^m (RV)^m R + (-1)^{m+1} (RV)^{m+1} S,
$$
with $m := \lceil 1 / \phi \rceil$. Since each element of $V$ is bounded by $N^{-\phi}$ with high probability, from \eqref{813}, we find that the last term in the expansion is $O(N^{-1})$. Taking expectation, we find that
\be
\E S_{ij} = \E \sum_{k=0}^m [(RV)^k R]_{ij} + O(N^{-1}).
\ee
Note that $R$ is independent of $V$. We can decouple $R$ and $V$ by taking partial expectation $\E_{ab}$ with respect to $V_{ab}$, which gives
\be
\E S_{ij} = \E \sum_{k=0}^4 A_k \E_{ab} (V_{ab}^k) + O(N^{-1}),
\ee
where $A_k$ depends only on $R$. For example, $A_5$ contains a term such as $R_{ia} R_{bb} R_{aa} R_{bb} R_{aa} R_{bj}$. 

From the moment matching condition, we know that the first four moments of $H$ and $\wt H$ coincide, thus the terms with $k=0, 1, \cdots, 4$ will vanish when we estimate $\E S_{ij} - \E T_{ij}$. Moreover, since we know that $|\E (V_{ab}^k)| = O(N^{-2 - \phi/2})$, it suffices to prove the bound $|\E A_k| \leq C N^{-1/3 + \phi /2}$ for $5 \leq k \leq m$. 
Now using an expansion such as (see \eqref{eq:S_exp4})
$$
\E \prod_{t=1}^s R_{i_t j_t} = \E \prod_{t=1}^s S_{i_t j_t} - \E \sum_{l=1}^m \wt A_l \E_{ab} (V_{ab}^l) + O(N^{-1}),
$$
where $\wt A_l$ are products of $S$,  we write the expectation of the product of the elements of $R$'s (like $A_k$)  as the sum of the expectation of the product of the elements of $S$'s. Hence, we can convert the problem into showing
\be
|\E B_k| \leq C N^{-1/3 + \phi /2}, \quad k \geq 5,
\ee
where $B_k$ is a sum of the product of the elements of $S$. For example, $B_5$ contains a term such as $S_{ia} S_{bb} S_{aa} S_{bb} S_{aa} S_{bj}$. Note here we effectively gain a factor of $N^{-\phi/2}$ in the required bound.

We now repeat the replacement argument with the terms in $B_k$, since the matrix $H_{\gamma}$ also satisfies the same bounded support condition as $H$. We consider a telescopic series, as in \eqref{gzb},
\be\label{gzb2}
\E B_k = \E B_k (S \to \wt G) + \sum_{\gamma' = 1}^{\gamma} \left( \E B_k (S \to G^{\gamma'}) - \E B_k (S \to G^{\gamma '-1}) \right),
\ee
where the notation $B_k (S \to G^{\gamma'})$ means that we consider the product of $G^{\gamma'}=(H_{\gamma'}-z)^{-1}$ instead of $S$ while keeping the indices the same. The first term in the r.h.s. of \eqref{gzb2} will be the sum of the products such as $\E\wt G_{ia} \wt G_{bb} \wt G_{aa} \wt G_{bb} \wt G_{aa} \wt G_{bj}$, and in a generic case, it contains at least one off-diagonal term of $\wt G$. In particular, we have $|\E B_k (S \to \wt G)| \leq C N^{-1/3}$. Hence, we are left to estimate the telescopic sum, where we use the argument above, which involves the resolvent expansion and partial expectation, again. Note that, each time we repeat the procedure, we effectively gain a factor of $N^{-\phi/2}$ in the required bound. Therefore, after repeating the procedure sufficiently many times (i.e., $O(1/\phi)$-times), we find that it suffices to prove that $|\E \wt B| \leq C$, where $\wt B$ is a sum of the products of the elements of $G^{\gamma}$, $1\leq \gamma\le \gamma_{\max}$. Since this is trivial from the bound \eqref{813}, we find that the bound $|\E G_{ij}| \leq C N^{-1/3}$ holds.

We now prove the lemmas using the ideas explained above. We first introduce some notations for simplifying the expressions, which will helps us study the expectation of the product of $S_{ij}$'s. 

\begin{defn}[Matrix operators $*_\gamma$ and $*$] \label{def*} 
For a $\gamma$ with $\Phi(a,b)=\gamma$, we define $A*_\gamma B$ as
\be
(A*_\gamma B)_{ij} =(A{\mathcal I_\gamma} B)_{ij}, \quad\quad  ({\mathcal I_\gamma})_{ij}={\bf 1}_{\{i,j\}=\{a,b\}}
\ee
When $a \neq b$, it satisfies
\be
(A*_\gamma B)_{ij} = A_{ia}B_{bj}+A_{ib}B_{aj}.
\ee
We will often drop the subscript $\gamma$ for convenience as $A*  B$. For simplicity, we denote the $k$-th power of $A$ under $*-$ product by $A^{*k}$, i.e., 
\be
A*A*A*A\cdots* A=A^{*k}
\ee
\end{defn}

\begin{defn}[$\mathcal P_{\gamma, {\bf k}}$ and $\mathcal P_{\gamma, k}$] \label{taz}
For $k \in \mathbb R$ and ${\bf k}=(k_1, k_2, \cdots, k_s)\in \mathbb R^s$ , $\gamma=\Phi(a, b)$, we define
\be\label{taz1}
\mathcal P_{\gamma, k}G_{ij}:=G^{*_\gamma (k+1)}_{ij}
\ee
and
\be \label{taz2}
\mathcal P_{\gamma, {\bf k}} \left( \prod_{t=1}^s G_{i_t j_t}\right) := \prod_t \left(\mathcal P_{\gamma, k_t} G_{i_t j_t}\right) = \prod_{t=1}^s G^{*_\gamma(k_t+1)}_{i_t j_t} .
\ee
If $\mathcal G_1$ and $\mathcal G_2$ are products of matrix entries of $G$'s as above, then { we define}
\be
\mathcal P_{\gamma, {\bf k}}(\mathcal G_1+\mathcal G_2) { :=} \mathcal P_{\gamma, {\bf k}} \,\mathcal G_1+\mathcal P_{\gamma, {\bf k}}\, \mathcal G_2
\ee
Note that $\mathcal P_{\gamma, {\bf k}}$ and $\mathcal P_{\gamma, { k}}$ are not linear operators but just notations we use for simplification. Similarly, for the product of the entries of the matrix $G-m_{sc} = G-m_{sc}I$, we define
\be \label{taz2wt}
\wt {\mathcal P}_{\gamma, {\bf k}} \left( \prod_{t=1}^s \left[G  -m_{sc}  \right]_{i_t j_t} \right) := \prod_t \left(\wt {\mathcal P}_{\gamma, k_t} \left[G -m_{sc}  \right]_{i_t j_t} \right),
\ee
where
\be
\wt {\mathcal P}_{\gamma, k} (G_{ij}-m_{sc})_{ij} = 
	\begin{cases}
	(G_{ij}-m_{sc})_{ij} & {\rm if} \quad i =j \quad {\rm  and} \quad k=0, \\
	\mathcal P _{\gamma, k} G_{ij} =G^{*_\gamma(k+1)}_{ij} & {\rm otherwise}.
	\end{cases}
\ee
\end{defn}

Using Definition \ref{taz2}, we may write, for example,
\be
\mathcal P_{\gamma, {\bf k}} \left( \prod_{t=1}^s G^{\gamma } _{i_t j_t}\right) = \prod_{t=1}^s  S^{*_{\gamma}(k_t+1)}_{i_t j_t}
,\quad\mathcal P_{\gamma, {\bf k}} \left( \prod_{t=1}^s G^{\gamma-1} _{i_t j_t}\right) = \prod_{t=1}^s  T^{*_{\gamma}(k_t+1)}_{i_t j_t}.
\ee
With the fact that $G^{*s} \mathcal I_\gamma G^{*t}= G^{*(s+t)}$, one can easily see that for $k\in \mathbb R$ and ${\bf k}\in \mathbb R^{k+1}$, 
\be\label{PPP}
\mathcal P_{\gamma, {\bf k}}(\mathcal P_{\gamma,k } G_{ij})=\mathcal P_{\gamma,k+|{\bf k}| } G_{ij}
\ee
Here, note that $(\mathcal P_{\gamma,k } G_{ij})$ is the sum of the products of the matrix entries of $G$, where each product contains $k+1$ matrix entries of $G$. 

Using the definitions above, we have the following lemma from the bound \eqref{813}. Recall that $R$, $S$, and $T$ depend on $\gamma$.
\begin{lem} 
For any ${\bf k\in \mathbb R^s}$, $\gamma$, $\gamma'$ and $i_1,j_1, \cdots, i_s,j_s$, we have with $3$-high probability that
\be\label{bG4}
\left|\mathcal P_{\gamma', {\bf k}} \left( \prod_{t=1}^s A_{i_t j_t}\right) \right|,\; \;
\left|\wt {\mathcal P}_{\gamma', {\bf k}} \left( \prod_{t=1}^s (A-m_{sc}) _{i_t j_t}\right) \right|\leq 4^{s+|{\bf k}|+1},
\ee
where $A$ can be $R$, $S$, or $T$.
\end{lem}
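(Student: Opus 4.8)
The plan is to prove \eqref{bG4} by a straightforward expansion of the $*_{\gamma'}$-products, which reduces the whole estimate to the uniform entrywise bound \eqref{813}. Throughout I work on the single $3$-high probability event $\Omega$ on which \eqref{813} holds for all $z\in S(C_0)$, all $\gamma$, and all $i,j$; on $\Omega$ every entry of each of $R,S,T$ has modulus at most $2$, and, since $|m_{sc}(z)|\le 1$ by \eqref{zmsc2}, every entry of $A-m_{sc}$ (with $A\in\{R,S,T\}$) has modulus at most $3$.

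First I would record the combinatorial shape of a single $*_{\gamma'}$-power. Write $\gamma'=\Phi(a,b)$. If $a\ne b$, then iterating the identity $(A*_{\gamma'}B)_{ij}=A_{ia}B_{bj}+A_{ib}B_{aj}$ shows that $A^{*_{\gamma'}(k+1)}_{ij}$ is a sum of exactly $2^{k}$ monomials, each a product of exactly $k+1$ matrix entries of $A$. If $a=b$, then $(\mathcal I_{\gamma'})_{ij}=\delta_{ia}\delta_{ja}$, so $A^{*_{\gamma'}(k+1)}_{ij}=A_{ia}(A_{aa})^{k-1}A_{aj}$ is a single product of $k+1$ entries. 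In both cases $A^{*_{\gamma'}(k+1)}_{ij}$ is a sum of at most $2^{k}$ monomials, each a product of exactly $k+1$ entries of $A$. Taking the product over $t=1,\dots,s$ as in Definition \ref{taz}, the quantity $\mathcal P_{\gamma',{\bf k}}(\prod_{t}A_{i_tj_t})=\prod_{t}A^{*_{\gamma'}(k_t+1)}_{i_tj_t}$ therefore expands as a sum of at most $\prod_{t}2^{k_t}=2^{|{\bf k}|}$ monomials, each consisting of exactly $\sum_{t}(k_t+1)=s+|{\bf k}|$ entries of $A$. On $\Omega$ each such monomial is bounded by $2^{s+|{\bf k}|}$, so $|\mathcal P_{\gamma',{\bf k}}(\prod_{t}A_{i_tj_t})|\le 2^{|{\bf k}|}\cdot 2^{s+|{\bf k}|}=2^{s+2|{\bf k}|}\le 4^{s+|{\bf k}|+1}$, which is the first half of \eqref{bG4}.

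For the second half the only change is that $\wt{\mathcal P}_{\gamma',{\bf k}}$ keeps a factor with $k_t=0$ and $i_t=j_t$ as the diagonal entry $(A-m_{sc})_{i_ti_t}$ rather than as $A_{i_ti_t}$, while all other factors of every monomial remain ordinary entries of $A$; the monomial count $2^{|{\bf k}|}$ and the monomial length $s+|{\bf k}|$ are unchanged. Since at most $s$ factors per monomial are of this special type, bounded by $3$ on $\Omega$, while the remaining $\ge|{\bf k}|$ factors are bounded by $2$, each monomial is at most $3^{s}2^{|{\bf k}|}$; summing over the at most $2^{|{\bf k}|}$ monomials gives $|\wt{\mathcal P}_{\gamma',{\bf k}}(\prod_{t}(A-m_{sc})_{i_tj_t})|\le 3^{s}4^{|{\bf k}|}\le 4^{s+|{\bf k}|+1}$. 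Hence \eqref{bG4} holds on $\Omega$, i.e.\ with $3$-high probability. The only step requiring any care is the bookkeeping of the star-product expansion — verifying that every monomial of $A^{*_{\gamma'}(k+1)}$ has length exactly $k+1$ in both the $a\ne b$ and $a=b$ cases, and that $\wt{\mathcal P}$ affects only diagonal factors carrying the trivial star-power — but the exponent $s+|{\bf k}|+1$ leaves ample slack, so no sharpness is needed anywhere.
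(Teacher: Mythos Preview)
Your proof is correct and is precisely the argument the paper has in mind: the paper does not spell out a proof of this lemma at all, simply stating that it follows ``from the bound \eqref{813}'' together with the definitions of $\mathcal P$ and $\wt{\mathcal P}$. Your expansion of the $*_{\gamma'}$-powers, counting of monomials, and entrywise estimate via \eqref{813} and \eqref{zmsc2} are exactly the intended computation.
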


The following lemma shows how we expand the expectation of the term $S_{i_1 j_1} S_{i_2 j_2} \cdots S_{i_s j_s}$. 

\begin{lem} \label{lem:S_expansion} Let $S=(H^{\gamma}-z)^{-1}$ as above and $\Phi(a,b)=\gamma$. Assume $z \in S(C_0)$ for $C_0$ in \eqref{813}. Fix $s= O(\varphi)$ and $\zeta=O(\varphi)$. Then, for any  
$$
S_{i_1 j_1}(z) S_{i_2 j_2}(z) \cdots S_{i_s j_s}(z),
$$ 
we have with $\al=(\al_1, \al_2,\cdots, \al_s) \in \mathbb R^s$, $|\al|=\sum \al_i$,
\begin{equation} \label{eq:S_exp2}
\E  \prod_{t=1}^s S_{i_t j_t} =\sum_{0\leq k\leq 4}A_\al \E [ (-v_{ab})^{k}]+ \sum_{\al_1, \al_2, \ldots, \al_s\ge 0}^{5\le |\al|\leq 2\zeta/\phi },
\mathcal A_{\al} \,\E \,\mathcal P_{\gamma,\al}\prod_{t=1}^s S _{i_t j_t}  + O(N^{-\zeta  })
\end{equation}
where $A_i$ $(0\leq i\leq 4)$ depend only on $R$, $\mathcal A$'s are independent of $(i_t, j_t)$, $1 \leq t \leq s$, and 
\be\label{boundcalA}
\mathcal |A_{\al }| \leq  N^{-  |{\bf \al}| \phi /10}  N^{-2}.
\ee
Similarly, we have 
\begin{equation} \label{eq:S_exp3}
\E  \prod_{t=1}^s ((S-m_{sc} )_{i_t j_t} ) =\sum_{0\leq \al\leq 4}\wt A_\al \E [ (-v_{ab})^{\al_l}]+ \sum_{\al_1, \al_2, \ldots, \al_s\ge 0}^{5\le |\al|\leq 2\zeta/\phi } \mathcal A_{\al} \,\E \,\wt {\mathcal P}_{\gamma,\al}\prod_{t=1}^s S _{i_t j_t} + O(N^{-\zeta  }),
\end{equation}
where again $\wt A_i$ $(0\leq i\leq 4)$ depend only on $R$. 

Furthermore, as \eqref{eq:S_exp2}, we have 
\begin{equation} \label{eq:S_exp4}
\E  \prod_{t=1}^s S_{i_t j_t} =\E  \prod_{t=1}^s R_{i_t j_t} + \sum_{\al_1, \al_2, \ldots, \al_s\ge 0}^{1\le |\al|\leq 2\zeta/\phi }
  \wt {\mathcal A}_{\al }\, \E\,\mathcal P_{\gamma,\al}\prod_{t=1}^s S _{i_t j_t}  + O(N^{-\zeta  })
\end{equation}
where $\wt {\mathcal A}$ are independent of $(i_t, j_t)$, $1\leq t\leq s$, and 
\be\label{boundcalA2}
| \wt {\mathcal A}_{\al }| \leq  N^{-  |{\bf \al}| \phi /10}. 
\ee
Note that the terms $\mathcal A$ and $ \wt {\mathcal A}$ depend on $\gamma$. 
\end{lem}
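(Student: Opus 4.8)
The plan is to prove all three identities \eqref{eq:S_exp2}--\eqref{eq:S_exp4} by iterating the resolvent identity (cf.\ \eqref{8800}) between $S=(H_\gamma-z)^{-1}$ and $R=(Q-z)^{-1}$ and packaging every resulting term with the operators $\mathcal P_{\gamma,{\bf k}}$ and $\wt{\mathcal P}_{\gamma,{\bf k}}$ of Definition~\ref{taz}. The essential algebraic input is that, since $Q_{ab}=Q_{ba}=0$ and $H_\gamma$ is real symmetric (so $v_{ab}=v_{ba}$), the rank-$\le 2$ perturbation factorizes as $V=v_{ab}\,\mathcal I_\gamma$ with $\gamma=\Phi(a,b)$, the case $a=b$ being the same formula with $v_{aa}$. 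Hence $(RV)^kR=v_{ab}^k\,R(\mathcal I_\gamma R)^k=v_{ab}^k\,\mathcal P_{\gamma,k}R$, and truncating the resolvent expansion at order $m$ gives
\[
S_{ij}=\sum_{k=0}^{m}(-v_{ab})^k\,(\mathcal P_{\gamma,k}R)_{ij}\;+\;(-v_{ab})^{m+1}\,\bigl(R(\mathcal I_\gamma R)^{m}\mathcal I_\gamma S\bigr)_{ij},
\]
together with the mirror identity, obtained by exchanging $R\leftrightarrow S$ and $v_{ab}\mapsto -v_{ab}$, namely $R_{ij}=\sum_{k=0}^{m}v_{ab}^k(\mathcal P_{\gamma,k}S)_{ij}+v_{ab}^{m+1}(S(\mathcal I_\gamma S)^m\mathcal I_\gamma R)_{ij}$. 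I would fix the truncation orders of size $O(\zeta/\phi)$, chosen large enough that in each step below the truncation tail is $O(N^{-\zeta})$.

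I would first prove \eqref{eq:S_exp4}. Substituting the forward identity into each of the $s$ factors of $\prod_t S_{i_tj_t}$ and multiplying out, the terms free of truncation remainders reorganize, by the definition \eqref{taz2} of $\mathcal P_{\gamma,{\bf k}}$ on products, into $\sum_{{\bf k}}(-v_{ab})^{|{\bf k}|}\,\mathcal P_{\gamma,{\bf k}}\prod_tR_{i_tj_t}$ (the ${\bf k}=0$ summand being $\prod_tR_{i_tj_t}$), while every remaining term carries an explicit factor $v_{ab}^{m+1}$. On the $3$-high-probability event where \eqref{813} and $|v_{ab}|\le q^{-1}=N^{-\phi}$ both hold, \eqref{bG4} bounds every $\mathcal P$-packaged quantity by $4^{O(m+s)}$, so those remainder terms are $O\bigl(N^{-\phi(m+1)}4^{O(m+s)}\bigr)=O(N^{-\zeta})$; on the complementary event the crude bound \eqref{813tr} and the exponentially small probability give the same. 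Then, inside each $\mathcal P_{\gamma,{\bf k}}\prod_tR_{i_tj_t}$ with $|{\bf k}|\ge 1$ I would use the mirror identity together with the composition rule \eqref{PPP} to rewrite it as a sum of $\mathcal P_{\gamma,\al}\prod_tS_{i_tj_t}$ with $|\al|\ge|{\bf k}|\ge 1$, each additional unit of $\mathcal P$-order carrying a compensating power of $v_{ab}$; absorbing $(-v_{ab})^{|{\bf k}|}$, these compensating powers, and the combinatorial multiplicities into a single scalar $\wt{\mathcal A}_\al$ yields \eqref{eq:S_exp4}. Since $|v_{ab}|^{|\al|}\le N^{-\phi|\al|}$ while the number of monomials of $\mathcal P$-order $|\al|$ produced from a fixed product is at most $(C|\al|)^{|\al|}$, and $|\al|$ stays bounded by a fixed polynomial in $\zeta$ and $1/\phi$ --- hence far below any positive power of $N$ --- the surplus in the exponent delivers the bound \eqref{boundcalA2}.

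For \eqref{eq:S_exp2} the only change is to take the expectation before re-expanding. After the forward step and the truncation estimate, $\E\prod_tS_{i_tj_t}=\sum_{0\le|{\bf k}|\le m}\E\bigl[(-v_{ab})^{|{\bf k}|}\,\mathcal P_{\gamma,{\bf k}}\prod_tR_{i_tj_t}\bigr]+O(N^{-\zeta})$, and since $R$ is a function of $Q$ alone while $v_{ab}$ is independent of $Q$, this splits as $\sum_{|{\bf k}|}\E[(-v_{ab})^{|{\bf k}|}]\,\E[\mathcal P_{\gamma,{\bf k}}\prod_tR_{i_tj_t}]$. The summands with $|{\bf k}|\le 4$ are exactly $\sum_{k\le 4}A_k\,\E[(-v_{ab})^k]$ with $A_k:=\sum_{|{\bf k}|=k}\mathcal P_{\gamma,{\bf k}}\prod_tR_{i_tj_t}$ a function of $R$ only; these are the terms that cancel against the $\wt H$ version under the four-moment match, precisely because $Q$, hence $R$ and $A_k$, is common to $H_\gamma$ and $H_{\gamma-1}$. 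For $|{\bf k}|\ge 5$ the moment hypotheses of Theorem~\ref{thm: new rigidity} together with $|v_{ab}|\le N^{-\phi}$ give $|\E[(-v_{ab})^{|{\bf k}|}]|\le C(\log N)\,N^{-2-\phi(|{\bf k}|-4)}\le C(\log N)\,N^{-2-\phi|{\bf k}|/5}$, and one again converts $\E[\mathcal P_{\gamma,{\bf k}}\prod R]$ into $\E[\mathcal P_{\gamma,\al}\prod S]$'s by the mirror identity; folding the moment factor, the extra $v_{ab}$-powers (which supply $N^{-\phi(|\al|-|{\bf k}|)}$) and the multiplicities into $\mathcal A_\al$ gives \eqref{eq:S_exp2} with \eqref{boundcalA}, because $\tfrac{\phi|{\bf k}|}{5}+\phi(|\al|-|{\bf k}|)\ge\tfrac{\phi|\al|}{5}$ always leaves a surplus $N^{-\phi|\al|/10}$ for the combinatorics. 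Finally \eqref{eq:S_exp3} is the same argument applied to the entries of $G-m_{sc}I$: the deterministic shift $m_{sc}I$ commutes with everything and is untouched by the resolvent expansion, so one merely replaces $\mathcal P_{\gamma,{\bf k}}$ by $\wt{\mathcal P}_{\gamma,{\bf k}}$, which coincides with it except for retaining the $-m_{sc}$ in a diagonal factor of order $0$ --- a factor that is already $O(1)$ and needs no further expansion.

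I expect the main difficulty to be the combinatorial bookkeeping rather than any individual estimate: each application of the resolvent expansion multiplies the number of monomials, so one must be disciplined about (i) never unfolding the $*_\gamma$-products but keeping them packaged and bounding them in one stroke via \eqref{bG4}, and (ii) verifying that every increment of $\mathcal P$-order is accompanied by a compensating factor $v_{ab}\le N^{-\phi}$, so that the surplus exponent (beyond what is needed for $N^{-|\al|\phi/10}$) always dominates the multiplicity $(C|\al|)^{|\al|}$ and the spurious $\log N$. This is what forces the constants in the truncation orders to be chosen generously and what pins down the fraction $\phi/10$ in \eqref{boundcalA}--\eqref{boundcalA2}. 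A secondary point requiring care is tracking the $R\leftrightarrow S$ conversion inside the $\mathcal P_{\gamma,{\bf k}}$-packages, for which \eqref{PPP} and its evident extension to products of entries are the main tools.
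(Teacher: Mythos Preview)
Your proposal is correct and follows essentially the same route as the paper: resolvent-expand each $S$ factor in $R$ to obtain $\sum_{{\bf k}}(-v_{ab})^{|{\bf k}|}\mathcal P_{\gamma,{\bf k}}\prod_t R_{i_tj_t}$, split off the $|{\bf k}|\le 4$ part using independence of $R$ and $v_{ab}$, and then convert the remaining $\mathcal P_{\gamma,{\bf k}}\prod R$ terms back to $S$ via \eqref{PPP}, with the bound \eqref{boundcalA} coming from $|\E v_{ab}^k|\le(\log N)N^{-2-(k-4)\phi}$ together with an $(sk)^k$-type multiplicity count. The only stylistic difference is that you invoke the mirror expansion $R=\sum_k v_{ab}^k\,\mathcal P_{\gamma,k}S+\cdots$ to perform the $R\to S$ conversion in one stroke, whereas the paper inverts only the forward identity and therefore iterates (introducing $A_{k_1,\ldots,k_n}$ and repeating until the residual $R$-terms drop below $N^{-\zeta}$); your variant is slightly more economical, the paper's makes the multiplicity bookkeeping more explicit, and both yield the same bounds.
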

 
We prove this lemma later in the section.
 
Recall that we let $S=G^\gamma=(H_\gamma-z)^{-1}$, $T=G^{\gamma-1}=(H_{\gamma-1}-z)^{-1}$. Note that the entires of $H_{\gamma}$ and $H_{\gamma -1}$ coincide except for the $(a, b)$ and $(b, a)$ entries, where $(H_{\gamma})_{ab} = (H_{\gamma})_{ba} =v_{ab}=h_{ab}$ and $(H_{\gamma-1})_{ab}= (H_{\gamma-1})_{ba} = w_{ab}=\wt h_{ab}$. It is obvious that a result similar to Lemma \ref{lem:S_expansion} holds for the product of $T$. Thus, as in \eqref{eq:S_exp2} we define the notation $\mathcal A^{\gamma, a}$, $a=0,1$ as follows:
\be
\E  \prod_{t=1}^s S_{i_t j_t} =\sum_{0\leq k\leq 4}\E A_\al \E [ (-v_{ab})^{k}]+ \sum_{\al_1, \al_2, \ldots, \al_s\ge 0}^{5\le |\al|\leq 2\zeta/\phi }
  \mathcal A^{\gamma, 0}_{\al } \E\mathcal P_{\gamma,\al}\prod_{t=1}^s S _{i_t j_t}   + O(N^{-\zeta  }),
\ee
\be
\E  \prod_{t=1}^s T_{i_t j_t} =\sum_{0\leq k\le  4}\E A_\al \E [ (-w_{ab})^{k}]+ \sum_{\al_1, \al_2, \ldots, \al_s\ge 0}^{5\le |\al|\leq 2\zeta/\phi }
  \mathcal A^{\gamma, 1}_{\al } \E\mathcal P_{\gamma,\al}\prod_{t=1}^s T _{i_t j_t}  + O(N^{-\zeta  }).
\ee

Using these two identities, we have 
\begin{equation} \begin{split} \label{zddd0}
&\E \prod_{t=1}^s G^{\gamma_{\rm max}} _{i_t j_t}-\E\prod_{t=1}^s G^{0} _{i_t j_t} \\
&=\sum_{\gamma}\left(\E \prod_{t=1}^s G^{\gamma} _{i_t j_t}-\E\prod_{t=1}^s G^{\gamma-1} _{i_t j_t}\right)\\ 
&=\sum_{\gamma}\sum_{{\bf k}: 5\leq |{\bf k}|\leq 2\zeta/\phi }
  \mathcal A^{\gamma,0}_{\bf k}
\,\E \ \mathcal P_{\gamma, {\bf k}} \left(\prod_{t=1}^s G^{\gamma} _{i_t j_t}\right) 
 -  \mathcal A^{\gamma,1}_{\bf k}\,\E \,\mathcal P_{\gamma, {\bf k}}  \left(\prod_{t=1}^s G^{\gamma-1} _{i_t j_t}\right)+ O(N^{-\zeta  }) 
\end{split} \end{equation}
where we used that $A_k$ ($0\leq k\leq 4$) depends only on $R$ and the first four moments of $v_{ab}=(H_\gamma)_{ab}$ and $w_{ab}=(H_{\gamma-1})_{ab}$ match. Then, we obtain that 
\be \label{zddd}
\left|\E \prod_{t=1}^s G^{\gamma_{\rm max}}_{i_t j_t}\right|\leq \left|\E\prod_{t=1}^s G^{0} _{i_t j_t}\right|   
+\sum_{\gamma} \sum_{a=0, 1} \sum_{{\bf k}: 5\leq |{\bf k}|\leq 2\zeta/\phi } | \mathcal A^{\gamma, a}_{\bf k}|
 \left|\E \mathcal P_{\gamma, {\bf k}}  \left(\prod_{t=1}^s G^{\gamma-a}_{i_t j_t}\right)\right| 
+ O(N^{-\zeta  }).
\ee
For the terms that belong to the fixed $|{\bf k}|=k$, we can see from \eqref{bG4} and \eqref{boundcalA} that they are bounded by 
\be\label{smbl}
N^{-\frac{k\phi}{10}}s^k4^{k+s}\le N^{-\frac{k\phi}{20}}4^{k+s}\leq N^{-\frac{k\phi}{21}}4^s \, .
\ee
Then, the second part of the r.h.s. of \eqref{zddd} is less than $C N^{-\frac{5\phi}{21}}4^s$ where we used $k\ge 5$.

Recall that $P_{\gamma, {\bf k}}  \left(\prod_{t=1}^s G^{\gamma-a} _{i_t j_t}\right)$ is also a sum of the products of $G$. Using the result \eqref{zddd0} again on the $\left|\E \mathcal P_{\gamma, {\bf k}}  \left(\prod_{t=1}^s G^{\gamma-a} _{i_t j_t}\right)\right|$, where we replace the $\gamma_{\max}$ with $\gamma-a$ in the left hand side, we obtain the following bound as in \eqref{zddd}:
\begin{equation} \begin{split}
&\left|\E \mathcal P_{\gamma, {\bf k}}  \left(\prod_{t=1}^s G^{\gamma-a} _{i_t j_t}\right)\right| \\
&\leq \left|\E \mathcal P_{\gamma, {\bf k}}  \left(\prod_{t=1}^s G^{0} _{i_t j_t}\right)\right|+
\sum_{\gamma'}
\sum_{a'=0, 1}
\sum_{{\bf k'}: 5\leq |{\bf k'}|\leq 2\zeta/\phi }  |\mathcal A^{\gamma', a'}_{\bf k'}|
 \left|\E  \mathcal P_{\gamma', {\bf k'}} \mathcal P_{\gamma, {\bf k}}  \left(\prod_{t=1}^s G^{\gamma'-a'} _{i_t j_t}\right)\right|+ O(N^{-\zeta  }),
\end{split} \end{equation}
where ${\bf k}'\in \mathbb R^{s+|{\bf k}|}$. Thus, together with  \eqref{zddd}, we have 
\begin{equation} \begin{split} \label{722s}
\left|\E \prod_{t=1}^s G^{\gamma_{\rm max}} _{i_t j_t}\right|
\leq &\left|\E\prod_{t=1}^s G^{0} _{i_t j_t}\right|  
+ 
\sum_{{\bf k}: 5\leq |{\bf k}|\leq 2\zeta/\phi }  \sum_{\gamma, a}|\mathcal A^{\gamma, a}_{\bf k}|
\left|\E \mathcal P_{\gamma, {\bf k}}  \left(\prod_{t=1}^s G^{0} _{i_t j_t}\right)\right|
\\
& \quad + \sum_{\gamma', \gamma}
\sum_{a, a' }
\sum_{{\bf k'},  {\bf k}} 
|\mathcal A^{\gamma, a}_{\bf k } \mathcal A^{\gamma', a'}_{\bf k'}|
 \left|\E  \mathcal P_{\gamma', {\bf k'}} \mathcal P_{\gamma, {\bf k}}  \left(\prod_{t=1}^s G^{\gamma'-a'} _{i_t j_t}\right)\right|+ O(N^{-\zeta  }).
\end{split} \end{equation} 

Since $|\mathcal A_{\bf k}| \leq (N^{-2-\phi |{\bf k}|/10})$, for the terms, in the second line of \eqref{722s}, that belong to the fixed $|{\bf k}|+|{\bf k'}|=k$, as in \eqref{smbl}, it is easily to be bounded by 
$$
N^{-\frac{k\phi}{20}}4^{k+s}\leq N^{-\frac{k\phi}{21}}4^s.
$$
Hence, the sum in the second line of \eqref{722s} is less than $N^{-\frac{10\phi}{21}}4^s + O(N^{-\zeta  })$, where we used that $|{\bf k}|+|{\bf k'}|\ge 10$. Repeating this process, we make the sum smaller and smaller. At the end, we obtain that 
\begin{equation} \begin{split} \label{HHtt}
&\left|\E \prod_{t=1}^s G^{\gamma_{\rm max}}_{i_t j_t}\right| \\
&\leq \sum_{n=0}^{6\zeta/\phi}\sum_{\gamma_1, \gamma_2,\cdots, \gamma_n}
\sum_{a_1, a_2,\cdots, a_n}
\sum_{{ \bf k_1},  {\bf k_2}\cdots, {\bf k_n}} 
|\prod_j\mathcal A^{\gamma_j, a_j}_{{\bf k}_j}|
 \left|\E  \mathcal P_{\gamma_n, {\bf k}_n}\cdots \mathcal P_{\gamma_1, {\bf k}_1}  \left(\prod_{t=1}^s G^{0} _{i_t j_t}\right)\right|+O(N^{-\zeta}),
\end{split} \end{equation}
where 
\be\label{conK}
n\leq 6\zeta/\phi,\quad {\bf k_1}\in \mathbb R^s,\quad {\bf k_2}\in \mathbb R^{s+|{\bf k_1}|}, \quad {\bf k_3}\in \mathbb R^{s+|{\bf k_1}| +|{\bf k_2}|} , \quad{\rm  etc., \quad and  }\quad 5\leq |{\bf k}_i|\leq 2\zeta/\phi. 
\ee
Using the bound $|\mathcal A_{\bf k}| \leq (N^{-2-\phi |{\bf k}|/10})$ again with $s, \zeta \leq O(\varphi)$,  we have 
\begin{equation} \begin{split} \label{HHtt2}
&\left|\E \prod_{t=1}^s G^{\gamma_{\rm max}} _{i_t j_t}\right| \\
&\leq \left|\E \prod_{t=1}^s G^{0} _{i_t j_t}\right|
+\max_{ {\bf k}, n}( N^{-2})^{n}(N^{-\phi/20})^{\sum_i |{\bf k_i}|}\sum_{\gamma_1, \gamma_2,\cdots, \gamma_n}\left|\E  \mathcal P_{\gamma_n, {\bf k}_n}\cdots \mathcal P_{\gamma_1, {\bf k}_1}  \left(\prod_{t=1}^s G^{0} _{i_t j_t}\right)\right|+O(N^{-\zeta}).
\end{split} \end{equation}
Note that the first term in the right hand side is from the sum with $n=0$ in \eqref{HHtt} and the $\gamma_{\max}$ in the left hand side can be replaced with any $0\leq \gamma_0 \le \gamma_{\max}$.

Since these $\mathcal A$ and $\mathcal P$ are independent of $i_t$ and $j_t$ $(1 \leq t \leq s)$, we may consider a linear combination of \eqref{HHtt2}, i.e.,
for a coefficient function $f(I,J)$ with
\be
\sum_{I, J} f(I,J)=1, {  \quad f(I, J) \geq 0, }\quad I=(i_1, i_2, \cdots, i_s), \quad  J=(j_1, j_2, \cdots, j_s),
\ee
we have 
\begin{align}\label{HHtt3}
\left|\E\sum_{I,J} f(I,J)\prod_{t=1}^s G^{\gamma_{\rm max}} _{i_t j_t}\right|
\leq& \left|\E \sum_{I,J} f(I,J)\prod_{t=1}^s G^{0} _{i_t j_t}\right|\\\nonumber
+&\max_{ {\bf k}, n, \gamma } (N^{-\phi/20})^{\sum_i |{\bf k_i}|} 
\left|\E \sum_{I,J} f(I,J) \mathcal P_{\gamma_n, {\bf k}_n}\cdots \mathcal P_{\gamma_1, {\bf k}_1}  \left(\prod_{t=1}^s G^{0} _{i_t j_t}\right)\right|+O(N^{-\zeta}).
\end{align}
In application, we let $H=H^{\gamma_{\max}}$, $\wt H= H^0$. 

Similarly, with \eqref{eq:S_exp4} we can extend \eqref{HHtt3} to 
\begin{equation} \begin{split} \label{bssm}
& \left|\E \sum_{I,J} f(I,J)\prod_{t=1}^s ((G -m_{sc}) _{i_t j_t})\right| \\
&\leq
\left|\E \sum_{I,J} f(I,J)\prod_{t=1}^s ((\wt G -m_{sc}) _{i_t j_t})\right| \\
& \quad +
 \max_{\gamma, n, {\bf k}}(N^{-\phi/20})^{\sum_i |{\bf k_i}|}
 \left|\E \sum_{I,J} f(I,J)\wt {\mathcal P}_{\gamma_n, {\bf k}_n}\cdots  \wt {\mathcal P}_{\gamma_1, {\bf k}_1}  \left(\prod_{t=1}^s (\wt G -m_{sc})  _{i_t j_t}\right)\right|+O(N^{-\zeta}).
\end{split} \end{equation}
We note that, in the case $f=N^{{  -s}}\prod \delta_{i_tj_t}$, the left hand side equals to $\E (m-m_{sc})^s$ and the first term in the right hand side equals to $\E (\wt m-m_{sc})^s$.

Now, we first use \eqref{HHtt} to prove Lemma \ref{lem: WBGij}.

\begin{proof}[Proof of Lemma \ref{lem: WBGij}]
Let $H=H^{\gamma_{\max}}$ and $\wt H= H^0$. Since $\wt H$ satisfies the bounded support condition with $q\sim N^{-1/2}/\log N$, we have from \eqref{Gij estimate} that, for $z$ satisfying the assumption in Lemma \ref{lem: WBGij}, with high probability,     
\be\label{tddc}
|\wt G_{ij}|\leq \varphi^{O(1)}\left(\sqrt{\frac{\im m_{sc}}{N\eta}}+\frac{1}{N\eta}\right)+ 2\delta_{ij}.
\ee
On the other hand, we have a trivial bound $ |\wt G_{ij}|\leq N$. (See \eqref{813tr}.) We now apply \eqref{HHtt} on $G_{ij} \overline{G_{ij}}$  with $s=2$ and $\zeta=1$. To prove the lemma, it suffices to show that the following holds for any ${\bf k }_1$, ${\bf k }_2\cdots, {\bf k }_n$, for $n$ satisfying \eqref{conK}:
\be
N^{-2n} \sum_{\gamma_1, \gamma_2,\cdots, \gamma_n}\left|\E  \mathcal P_{\gamma_n, {\bf k}_n}\cdots \mathcal P_{\gamma_1, {\bf k}_1} \wt G_{ij} \overline{\wt G_{ij}} \right| \leq \varphi^C\left( \frac{\im m_{sc}}{N\eta} +\frac{1}{(N\eta)^2} \right).
\ee
With Lemma \ref{lm:msc}, it is easy to check that the right hand side is larger than $N^{-1}$. Let $\Phi(\gamma_t)=(a_t, b_t)$. It only remains to prove that 
\be\label{sysz}
\max_{\gamma_1, \gamma_2,\cdots, \gamma_n: \;i,j\notin \cup_{1\le t\le n}\{a_t, b_t\}}\left|\E  \mathcal P_{\gamma_n, {\bf k}_n}\cdots \mathcal P_{\gamma_1, {\bf k}_1} \wt G_{ij} \overline{\wt G_{ij}} \right|\leq \varphi^C \left( \frac{\im m_{sc}}{N\eta} +\frac{1}{(N\eta)^2}\right).
\ee
By definition, $\mathcal P_{\gamma_n, {\bf k}_n}\cdots \mathcal P_{\gamma_1, {\bf k}_1} \wt G_{ij} \overline{\wt G_{ij}}$ is a finite sum of the products of the matrix entries of $G$ and $\overline{G}$. Furthermore, for each product, there exist at least two off diagonal terms, since the index $i$ {appears exactly twice} and there is no $G_{ii}$ term in $ \mathcal P_{\gamma_n, {\bf k}_n}\cdots \mathcal P_{\gamma_1, {\bf k}_1} \wt G_{ij} \overline{\wt G_{ij}}$. From the existence of these two off-diagonal terms and from \eqref{tddc}, we obtain \eqref{sysz} and complete the proof of Lemma \ref{lem: WBGij}.
\end{proof}

Next, we use \eqref{bssm} to prove Lemma \ref{lem:p moment}.
\begin{proof}[Proof of Lemma \ref{lem:p moment}]
For simplicity, we prove instead that 
\be
\left| \E (m-m_{sc})^p \right| \leq \left| \E (\wt m-m_{sc})^p \right| + (Cp)^{Cp} (X^2+Y+\varphi^CN^{-1})^p.
\ee
(The proof of \eqref{sqbm} is exactly the same except that it involves more terms with more complicated expressions.) Using \eqref{bssm} with $i_t=j_t$ and $s=\zeta=p$ and $f(I,J)=N^{-p}\prod_t \delta_{i_t j_t}$, since $\mathcal A$ are independent of $i_t, j_t$ for any $1 \leq t \leq s$, we have 
\begin{align}\label{bssm2}
\left|\E   ( m-m_{sc})^p \right|
\leq  &
\left|\E ( \wt m-m_{sc})^p \right|
\\\nonumber
+&
 \max_{\gamma, {\bf k},n}(N^{-\phi/20})^{\sum_i |{\bf k_i}|}
 \left|\E \frac{1}{N^{p}}\sum_{i_1 \cdots, i_p} \wt {\mathcal P}_{\gamma_n, {\bf k}_n}\cdots  \wt {\mathcal P}_{\gamma_1, {\bf k}_1}  \left(\prod_{t=1}^p (\wt G_{i_t,i_t} -m_{sc}) \right)\right|+O(N^{-\zeta}).
\end{align}
With the assumption \eqref{asXY}, the first term in the right hand side is bounded with $(Y+\varphi^C N^{-1})^p$, where for the bad event of probability space we used \eqref{813tr}. In order to complete the proof, we now only need to bound the second term in the the right hand side of \eqref{bssm2}. For any fixed $\gamma_1, \cdots, \gamma_n$, ${\bf k_1}, \cdots, {\bf k_n}$ and $i_1, \cdots,  i_p$ satisfying \eqref{conK}, we know that  
\be\label{dsb}
\wt {\mathcal P}_{\gamma_n, {\bf k}_n}\cdots  \wt {\mathcal P}_{\gamma_1, {\bf k}_1}  \left(\prod_{t=1}^p (\wt G_{i_t,i_t} -m_{sc}) \right)
\ee
is the sum of at most $C^{\sum |{\bf k}_i|}$ products of $\wt G_{ij}$ (including the terms with $i = j$) and $(\wt G_{ii}-m_{sc})$, where the total number of $\wt G_{ij}$ and $(\wt G_{ii}-m_{sc})$ is $\sum |{\bf k}_i|+p {\,=\,} O(\varphi^2)$.  Since $\wt G$ has a rough bound $|\wt G(z)|\le \eta^{-1}\leq N$, we know \eqref{dsb} is always less than $N^{O(\varphi^2)}$.  With the assumption that \eqref{asXY} holds with 3-high probability, we noticed that  the event that \eqref{asXY} does not hold is negligible. Futhermore, for each product in \eqref{dsb} and any fixed $t$,   $1\le t\le p$, we know there are two $i_t$'s in the indices of $G$. These two  $i_t$'s can only appear as (a) $\wt G_{i_t i_t}-m_{sc}$ in the product, or (b) $G_{i_t, a} G_{b, i_t }$, where the indices $a$ and $b$ come from some $\gamma_k$ and $\gamma_l$, $(1\leq k,l\leq n)$ via    $\mathcal P$. Thus, after averaging over $1 \leq i_t\leq N$, this term becomes (a) $\wt m-m_{sc}$, which is bounded by $Y$ (see \eqref{asXY}), or (b)
$ 
 N^{-1} \sum_{i_t} G_{i_t, a } G_{b, i_t} ,
$ 
which is bounded by $X^2+C N^{-1}$ with \eqref{asXY}. In the case (b), we also used the fact that the number of non-generic terms with $i_t = a$ or $i_t = b$ is smaller than that of generic terms by a factor $N^{-1}$, hence we bound the contribution from the non-generic terms by $C N^{-1}$.

Therefore, we have showed that after averaging $1\leq i_1, i_2, \cdots, i_p\leq N$, i.e., applying $N^{-p} \sum_{i_1 \cdots, i_p}$, each $i_t$ either contributes a factor $\wt m-m_{sc}$, i.e. $Y$, or $N^{-1} \sum_{i_t} G_{i_t a } G_{b i_t}$, i.e., $(X^2+CN^{-1})$. For any other $G$'s in the product with no $i_{t}$ ($1\le t\le p$), we simply bound them as $C$, then for any fixed $\gamma_1,\cdots, \gamma_n$, ${\bf k }_1,\cdots,{\bf k }_n$, we had proved that 
  \be\left|\E \frac{1}{N^{p}}\sum_{i_1 \cdots, i_p} \wt {\mathcal P}_{\gamma_n, {\bf k}_n}\cdots  \wt {\mathcal P}_{\gamma_1, {\bf k}_1}  \left(\prod_{t=1}^p (\wt G_{i_t,i_t} -m_{sc}) \right)\right|\leq C^{\sum_i |{\bf k_i}|+p}(X^2+Y+CN^{-1})^p .
 \ee
With \eqref{bssm2}, this completes the proof of Lemma \ref{lem:p moment}.   
\end{proof} 

\begin{proof}[Proof of Lemma \ref{lem:S_expansion}]
Choose 
\be\label{defxin}
\xi=2\zeta/\phi. 
\ee

We apply the expansion 
\begin{equation}
S = R - RVR + (RV)^2 R - \cdots + (-1)^{\xi} (RV)^{\xi} R + (-1)^{\xi + 1} (RV)^{\xi + 1} S .
\end{equation}
With the condition $|v_{ab}|\leq N^{-\phi}$, we note that the last term in this expansion is $O(N^{- \zeta})$. Thus,
\begin{equation}\label{cfdrx}
S = R - RVR + (RV)^2 R - \cdots + (-1)^{\xi} (RV)^{\xi} R + O(N^{-\xi\phi }).
\end{equation}
Since $v_{kl} = 0$ if $\{ k, l \} \neq \{ a, b \}$, we have
\begin{equation}
([-RV]^m R)_{i_t j_t} = \sum_{ (a_i, b_i) \in \{ (a, b), (b,a) \};\, 1 \leq i \leq m} R_{i_t a_1} V_{a_1 b_1} R_{b_1 a_2} \cdots V_{a_m b_m} R_{b_m j_t} \,.
\end{equation}
We note that  $V_{ab}=V_{ba}=v_{ab}=h_{ab}$. Using Definition \ref{def*} and Definition \ref{taz}, we have
\be\label{wbyj}
 [-RV]^{k } R=R^ {*(k+1)}(-h_{ab})^k
 , \quad S=\sum_{0\le k\leq  \xi} ( R^ {*(k+1)})(-h_{ab})^k + O(N^{- \xi\phi}).
\ee
Similarly, 
\be
R_{ij}=\sum_{0\le k\leq  \xi} (\mathcal P_{\gamma, k}S_{ij})(h_{ab})^k + O(N^{- \xi\phi}), \quad (R -m_{sc}) _{ij} =\sum_{0\le k\leq  \xi} (\wt {\mathcal P}_{\gamma, k}S_{ij}) (h_{ab})^k + O(N^{- \xi\phi}).
\ee
For this reason, we only show the proof of \eqref{eq:S_exp2}, and \eqref{eq:S_exp3} can be proved analogously. The proof of \eqref{eq:S_exp4} will roughly be explained at the end of this proof. 

Using \eqref{wbyj} and \eqref{813}, with 3-high probabilily, we have that (with definition \ref{taz})
\be\label{dmty}
\prod_{t=1}^{s} S_{i_t j_t} 
= \sum _{0\le k\leq  \xi s;} \sum_{{\bf k}\in    I^s_{ \xi, k}}
\left(\mathcal P_{\gamma, {\bf k}}\prod_{t=1}^s R_{i_tj_t} \right) (-h_{ab})^{k_t}+ O(2^sN^{-\xi\phi})
\ee
where 
\be
{\bf k}:=(k_1,k_2, \cdots, k_s), 
 \quad I^a_{b,c}:=\{{\bf k}\in \mathbb R^a: 0\le k_i\leq b,   \sum k_i=c\}
\ee
Note that, from the above definition,
\be\label{boundI}
|I^a_{b,c}|\leq a^c \, .
\ee
We note that the term in \eqref{dmty} belonging to $k=0$ is $\prod_{t=1}^{s} R_{i_t j_t}$. For the terms belonging to $k\ge \xi$, using \eqref{bG4}, we know that with 3-high probability they are bounded by
\be
\sum_{k\ge \xi}s^k 4^{k+s}N^{-k\phi} \le O( s^{\xi} 4^{\xi+s} N^{-\xi\phi}),
\ee
where $s^k$ comes from $\sum_{{\bf k}\in I^s_{\xi, k}}$. Hence, 
\be\label{hxxt}
\prod_{t=1}^{s} S_{i_t j_t} 
=\prod_{t=1}^{s} R_{i_t j_t}+  \sum _{1\leq k\leq \xi } \left( \sum_{{\bf k} \in I^s_{\xi, k}}
\mathcal P_{\gamma, {\bf k}} \prod_{t=1}^s R _{i_t j_t}  \right) (-h_{ab})^k+ O(  s^{\xi} 4^{\xi+s} N^{-\xi\phi}).
\ee

Recall that $(S^{*s})_{ij}$ is a sum of terms as in Definition \ref{def*}. As a special case, consider a term in $(S^{*s})_{ij}=\mathcal P_{\gamma, s-1}S_{ij}$ and rewrite it as $S_{i_1 j_1} S_{i_2 j_2} \cdots S_{i_s j_s}$. We have 
\be 
(S^{*s})_{ij} = (R^{*s})_{ij}+\sum _{1\leq k\leq \xi } \left( \sum_{{\bf k}\in I^s_{\xi, k}}
(\mathcal P_{\gamma, {\bf k}}(R^{*s})_{ij}) \right)(-h_{ab})^k
+ O( s^{  \xi} 4^{ \xi+s} N^{-\xi\phi}).
\ee
Then, with \eqref{PPP}, we have (here we replaced $s$ with $s+1$ for simplicity)
\be \label{cyyl}
 \mathcal P_{\gamma, s }S_{ij}=\mathcal P_{\gamma, s }R_{ij}+ \sum _{1\leq k\leq \xi }| I^{s+1}_{\xi, k}|  (\mathcal P_{\gamma, {s+k }}R)_{ij} (-h_{ab})^k+ O(  s^{\xi} 4^{ \xi+s} N^{-\xi\phi}) , 
\ee
Define, for $0\leq k\leq 4$, 
\be
A_k:=\sum_{{\bf k}\in I^s_{\xi, k}} \mathcal P_{\gamma, {\bf k}}R_{i_t j_t} = \sum_{{\bf k}\in I^s_{\xi, k}} \prod_{t=1}^s R^{*(k_t+1)}_{i_t j_t} 
\ee
Clearly, they depend only on $R$. Thus, as in \eqref{hxxt}, with 3-high probability, 
\be
\prod_{t=1}^{s} S_{i_t j_t} 
=\sum_{k=0}^{4}A_k(-h_{ab})^k+ \sum _{5\leq k\leq \xi }\left( \sum_{{\bf k}\in  I^s_{\xi, k}}
\mathcal P_{\gamma, {\bf k}}\prod_{t=1}^s R _{i_t j_t}  \right)(-h_{ab})^k+
O(  s^{\xi} 4^{ \xi+s} N^{-\xi\phi}).
\ee
We take the expectation $\E$ in the both sides of the equation. Recall that the good event holds with 3-high probability and the entries of $S$ and $R$ are bounded by $\eta^{-1} \leq N$. (see \eqref{813tr}). Furthermore, in this proof, no products have more than $O(\varphi^2)$ entries of $S$ or $R$'s. Thus, when taking the expectation $\E$, we can simply ignore the set of bad event.

To simplify the notation, we define
\be
n_k:= \E (-h_{ab})^k.
\ee
Then, we get
\begin{equation} \label{eq:S expand2}
\E\prod_{t=1}^{s} S_{i_t j_t} 
=\sum_{k=0}^{4} n_k\E A_k+ \sum _{5\leq k\leq \xi } n_k\left( \sum_{{\bf k}\in    I^s_{\xi, k}}
\E\mathcal P_{\gamma, {\bf k}}\prod_{t=1} ^sR _{i_tj_t}   \right) +
O(  s^{  \xi} 4^{ \xi+s} N^{-\xi\phi})
\end{equation}
To estimate $\E\mathcal P_{\gamma, {\bf k}}\prod_{t=1} ^s R_{i_t j_t} 
=\E \prod_{t} \mathcal P_{\gamma, k_t} R _{i_t j_t}$, we first use \eqref{cyyl} and obtain 
\be
\mathcal P_{\gamma, k_t} S_{i_tj_t}=\mathcal P_{\gamma, k_t} R _{i_tj_t}+\sum _{1\le l_t\le \xi -k }
|I^{k_t+1}_{\xi-k, l_t}|  \mathcal P_{\gamma, (k_t+l_t)} R _{i_tj_t}  (-h_{ab})^{l_t}+ O( (4k_t+4)^{\xi-k} 4^{k_t+1}  N^{-\xi\phi+k\phi}).
\ee
Note that from \eqref{boundI} we have a bound 
$$
|I^{k_t+1}_{\xi-k, l_t}|  \leq (k_t+1)^{l_t}.
$$ 
From that $|\mathcal P_{\gamma, k_t} S_{i_tj_t}|\leq 4^{k_t+{ 2}}$ (see \eqref{bG4}), for $0\le k_1,k_2, \cdots, k_s\le \zeta  /\phi $, $\sum k_i=k$ and $k\leq \zeta/\phi$, we obtain that
\begin{equation} \begin{split} \label{dydy}
\mathcal P_{\gamma, {\bf k}}\prod_{t=1} ^sS _{i_tj_t} =&\mathcal P_{\gamma, {\bf k}}\prod_{t=1} ^sR _{i_tj_t} +\sum _{1\le l\le( \xi-k)s  }
\left(\sum_{{\bf l}\in I^s_{ \xi-k, l}}
\prod_t|I^{k_t+1}_{ \xi-k, l_t}|  \mathcal P_{\gamma, k_t+l_t} R _{i_tj_t}\right) (-h_{ab})^l \\
& \quad+ O(  (4k+4)^{\xi-k} 4^{k+s}  N^{-\xi\phi+k\phi}).
\end{split} \end{equation}
For the terms that belong to $l \ge \xi-k$, from \eqref{bG4} and \eqref{boundI}, they are bounded by 
$$ 
\sum_{l\ge \xi-k}s^l(k+1)^l4^{k+l+s} N^{-l\phi} \leq  (sk+s)^{ \xi-k} 4^{\xi+s} N^{-\xi\phi+k\phi}.
$$
Then, the upper bound of $l$ in \eqref{dydy} can be reduced to $\xi-k$ as follows: 
\begin{equation} \begin{split} \label{gh}
\mathcal P_{\gamma, {\bf k}}\prod_{t=1} ^sS _{i_tj_t} =&\mathcal P_{\gamma, {\bf k}}\prod_{t=1} ^sR _{i_tj_t} +\sum _{1\le l\le  \xi-k  }
\left(\sum_{{\bf l}\in I^s_{ \xi-k, l}}
\prod_t|I^{k_t+1}_{ \xi-k, l_t}|  \mathcal P_{\gamma, k_t+l_t} R _{i_tj_t}\right) (-h_{ab})^l \\
& \quad + O(  (sk+s)^{\xi-k} 4^{{ \xi} +s}  N^{-\xi\phi+k\phi}) 
\end{split} \end{equation}

We observe that
 \be
 \prod_t|I^{k_t+1}_{ \xi-k, l_t}|  \mathcal P_{\gamma, k_t+l_t} R _{i_tj_t}
 = \left(\prod_t|I^{k_t+1}_{ \xi-k, l_t}| \right) \left(\mathcal P_{\gamma, {\bf k+l}}\prod_t R _{i_tj_t}\right), \quad {\bf l}=(l_1,l_2, \cdots, l_s).
 \ee
Inserting it into \eqref{eq:S expand2}, with $|h_{ab}|\leq N^{-\phi}$ and $|I^s_{\xi,k}| \leq s^k$,  $k\leq \xi$, we get
\begin{align}\label{off}
\E\prod_{t=1}^{s} S_{i_t j_t} 
=&\sum_{k=0}^{4}  n_k\E A_k
+
 \sum _{5\le k\le \xi  }n_k
\left( \sum_{{\bf k}\in I^s_{\xi,k}}
\E \mathcal P_{\gamma, {\bf k}}\prod_{t=1}^s S_{i_t j_t} \right)
\\\nonumber
&-\sum _{k= 5}^{ \xi }
\sum _{l=1}^{ \xi -k  }
n_kn_l\left(
\sum_{{\bf k}\in I^s_{\xi,k}}
 \sum_{{\bf l}\in I^s_{\xi-k,l}}
\left(\prod_t|I^{k_t+1}_{ \xi-k, l_t}| \right) \E\left(\mathcal P_{\gamma, {\bf k+l}}\prod_t R _{i_tj_t}\right) \right)
\\\nonumber
&+ O(\xi ( s\xi+s)^{\xi} 4^{s+\xi}   N^{-\xi\phi }) ,
\end{align}
where the single $\xi$ factor in the last term comes from $\sum_k$. For $k\ge 5$, define 
\be
A_k=  \sum_{{\bf k}\in I^s_{\xi , k}}  \mathcal P_{\gamma, {\bf k}}\prod_{t=1} ^sS _{i_t j_t} \, .
\ee
Clearly, it has at most $s^k$ terms having the form of $\mathcal P \prod S $. Applying \eqref{gh} again on $\E\mathcal P_{\gamma, {\bf k+l}}\prod_t R _{i_tj_t}$, as in \eqref{gh}, we have 
\begin{align}
 &\mathcal P_{\gamma, {\bf k+l}}\prod_t R_{i_t j_t} \nonumber \\
=&\mathcal P_{\gamma, {\bf k+l}}\prod_t S_{i_t j_t}-\sum _{1\leq o\le {  \xi-k-l} }
\left(\sum_{{\bf o}\in I^s_{\xi-k-l, o}}
\left(\prod_t|I^{k_t+l_t+1}_{ \xi-k-l, o_t}| \right) \mathcal P_{\gamma, {\bf k+l+o}}\prod_t R _{i_tj_t}\right) (-h_{ab})^o
\\\nonumber
&+ O(  (s(k+l)+s)^{\xi-k-l} 4^{{ \xi}+l+s}  N^{(-\xi+k+l)\phi}) 
\end{align}
We now insert it back to \eqref{off}, replacing the notation $k$, $l$, $o$ with $k_1$, $k_2$, $k_3$ and $k_t$, $l_t$, $o_t$ with ${ k}_{1}(t)$,  ${ k}_{2}(t)$, ${ k}_{3}(t)$, respectively. Using 
\be
|n_kn_l |
\sum_{{\bf k}\in I^s_{\xi,k}}
 \sum_{{\bf l}\in I^s_{\xi-k,l}}
 \prod_{t=1} ^s |I^{k_t+1}_ {\xi-k, l_t}| \leq N^{-(k+l)\phi}s^k s^l(k+1)^l,
\ee
we obtain
\begin{align}\label{off2}
&\E\prod_{t=1}^{s} S_{i_t j_t} \nonumber \\ 
=&\sum_{k_1=0}^{\xi}  n_{k_1}\E A_{k_1} 
 -\sum _{k_1= 5}^{ \xi  }
\sum _{k_2=1}^{ \xi-k_1  }
 n_{k_1}n_{k_2}\left(
\sum_{{\bf k_1}\in I^s_{\xi , k_1}}
 \sum_{{\bf k_2}\in I^s_{\xi -k_1, k_2}}
\left(  \prod_{t=1} ^s 
 |I^{{ k}_{1}(t)+1}_{\xi-k_1, {  k}_{2}(t)}|\right)
   \E \mathcal P_{\gamma, {\bf k_1+k_2}}\prod_t S _{i_tj_t}  \right) \nonumber
\\
&+\sum _{k_1= 5}^{ \xi  }
\sum _{k_2=1}^{ \xi-k_1  }\sum _{k_3=1}^{ \xi-k_1-k_2  }
 n_{k_1}n_{k_2}n_{k_3}\times
 \\
 & \quad \left(
\sum_{{\bf k_1}\in I^s_{\xi , k_1}}
 \sum_{{\bf k_2}\in I^s_{\xi -k_1, k_2}}
  \sum_{{\bf k_3}\in I^s_{\xi -k_1-k_2, k_3}}
\left(\prod_{t=1} ^s 
 |I^{{ k}_{1}(t)+1}_{\xi-k_1, { k}_{2}(t)}| |I^{{ k}_{1}(t)+{ k}_{2}(t)+1}_{\xi-k_1-k_2, { k}_{3}(t)}|\right)
 \E \mathcal P_{\gamma, {\bf k_1+k_2+k_3}}\prod_t R _{i_tj_t}  \right) \nonumber
\\ 
&+O(  \xi ^2 ( s\xi+s)^{\xi} 4^{s+\xi}   N^{-\xi\phi}), \nonumber
\end{align}
where the factor $\xi^2$ comes from $\sum_{k_1}\sum_{k_2}$. Define 
\be
A_{k_1,k_2}:=-\sum_{{\bf k_1}\in I^s_{\xi , k_1}}
 \sum_{{\bf k_2}\in I^s_{\xi -k_1, k_2}}
 \left(  \prod_{t=1} ^s 
 |I^{{ k}_{1}(t)+1}_{\xi-k_1, { k}_{2}(t)}|\right)
   \E \mathcal P_{\gamma, {\bf k_1+k_2}}\prod_t S _{i_t j_t}.
\ee 
Clearly, letting $k:=k_1+k_2$, we find that $A_{k_1,k_2}$ has at most  
$$
s^{k_1+k_2} \prod_{t=1} ^s 
 |I^{{ k}_{1}(t)+1}_{\xi-k_1, { k}_{2}(t)}|\leq s^{k_1+k_2}(k_1+1)^{k_2} \leq s ^{k}(k+1)^k\leq C(sk)^{ k}
$$
terms of the form $\mathcal P\prod S$. 

We repeat the previous procedure $\xi$ times. Recall that $k_i:= \sum_t k_{i }(t)$. Let
\be
\wt k_{i}:=k_{1 }+k_{2 }+\cdots+ k_{i  }, \quad \wt k_{i }(t):=k_{1 }(t)+k_{2 }(t)+\cdots+ k_{i }(t).
\ee
Define
\be \begin{split} \label{wbtnm}
& A_{k_1,k_2,k_3\cdots, k_n} \\
&=(-1)^{n-1}\sum_{{\bf k_1}\in I^s_{\xi , k_1}}
 \sum_{{\bf k_2}\in I^s_{\xi -\wt k_1, k_2}}
  \sum_{{\bf k_2}\in I^s_{\xi -\wt k_2, k_3}}
  \cdots
   \sum_{{\bf k_n}\in I^s_{\xi -\wt k_{n-1}, k_n}}
 \prod_{t=1} ^s  \left(\prod_{m=1}^{n-1}
 |I^{\wt k_{m}(t)+1}_{\xi-\wt k_m, k_{{m+1},t}}|\right)
  \mathcal P_{\gamma, (\sum_{i=1}^n {\bf k_i})}\prod_t S _{i_t j_t}.  
\end{split} \ee
Let $\# A_{k_1,k_2,k_3\cdots, k_n}$ be the number of the terms of the form $\mathcal P\prod S$ in $A_{k_1,k_2,k_3\cdots, k_n}$. Clearly, with $k:=\sum_{i}k_i$,   
\be\label{dngs}
\# A_{k_1,k_2,k_3\cdots, k_n}\leq s^{k}(k +1)^{k }  \leq C ( sk)^{ k}.
\ee
Thus, we obtain that 
\begin{equation} \begin{split} \label{off3}
\E\prod_{t=1}^{s} S_{i_t j_t} 
=&\sum_{k_1=0}^{\xi}  n_{k_1}\E A_{k_1}+\sum _{k_1= 5}^{ \xi  }
\sum _{k_2=1}^{ \xi-k_1  }
 n_{k_1}n_{k_2}A_{k_1,k_2}
+\sum _{k_1= 5}^{ \xi  }
\sum _{k_2=1}^{ \xi-k_1  }\sum _{k_3=1}^{ \xi-k_1-k_2  }
 n_{k_1}n_{k_2}n_{k_3} A_{k_1,k_2,k_3}
 \\
 &+\cdots
 \\
 &+\sum _{k_1, k_2,\cdots, k_\xi;} 
\left( \prod _{i=1}^{\xi} n_{k_i} \right)A_{k_1,k_2,\cdots, k_\xi}+O(  \xi ^\xi ( s\xi+s)^{\xi} 4^{s+\xi}   N^{-\xi\phi }) ,
\end{split} \end{equation}
where we sum up $k_1, k_2,\cdots, k_\xi$ under the condition $ k_1\ge 5, k_i\ge 1, 1\le \sum k_i\le \xi$. The factor 
$\xi^\xi$ comes from $\sum _{k_1, k_2,\cdots, k_\xi;}$. The equation \eqref{off3} implies \eqref{eq:S_exp2}. 

Now we are ready to prove \eqref{boundcalA}. Since $\al$ plays the role of $(\sum_{i=1}^n {\bf k_i})$ in \eqref{wbtnm}, we have that
\be
|\al|= \sum_t \al_t =\sum_{i}k_i
\ee
Then, we obtain that 
$$
|\mathcal A_{\al }|\leq  \sum_{\sum k_i=\sum \al_i}\#A_{k_1,k_2,\cdots, k_n}\prod_t \E|h_{ab}|^{k_t}.
$$
Note that the Wigner matrix $H$ under the assumptions of \ref{thm: new edge un}  (i.e., $H^\V$ in Lemma \ref{lem:p moment} and \ref{lem:green}) satisfies
\begin{equation}
|\E (h_{ab})^k| \leq  (\log N)N^{-2-(k-4)\phi}, \quad k\ge 5.
\end{equation}
With \eqref{dngs} and that $k_1 \ge 5$, we obtain \eqref{boundcalA}. 

Finally, we briefly explain the proof of \eqref{eq:S_exp4} and \eqref{boundcalA2}. It is almost the same as the one for \eqref{eq:S_exp2} and \eqref{boundcalA}, except changing \eqref{eq:S expand2} to
\begin{equation} 
\E\prod_{t=1}^{s} S_{i_t j_t} 
=\E\prod_{t=1}^{s} R_{i_t j_t} + \sum _{1\leq k\leq \xi } n_k\left( \sum_{{\bf k}\in I^s_{\xi, k}}
\E\mathcal P_{\gamma, {\bf k}}\prod_{t=1} ^sR _{i_tj_t}   \right) +
O(  s^{  \xi} 4^{ \xi+s} N^{-\xi\phi}),
\end{equation} 
i.e., we move the $k=1,2,3,4$ part from the first term in the right hand side to the second term. Then we keep using \eqref{hxxt} and \eqref{gh} to estimate $\E\mathcal P_{\gamma, {\bf k}}\prod_{t=1}^s R_{i_t j_t}$ as in the proof for \eqref{eq:S_exp2} and \eqref{boundcalA}.
\end{proof}

Last, we prove Lemma \ref{lem:green}.
 
\begin{proof}[Proof of Lemma \ref{lem:green}]
For simplicity, we only prove \eqref{mainpear}. The proof for \eqref{sbc22} is similar.

Recall that $\eta = N^{-2/3 - \epsilon}$. Define
\begin{equation}
x^S := \eta \; \im \tr S = \eta^2 \sum_{i, j} S_{ij} \overline{S_{ij}}, \quad x^R := \eta \; \im \tr R = \eta^2 \sum_{i, j} R_{ij} \overline{R_{ij}}.
\end{equation}
Recall also that $S=(H_\gamma-z)^{-1}$ and $R=(Q-z)^{-1}$, where all the entries of $H_{\gamma}$ and $Q$ are the same except the $(a,b)$ entries. Then, since the rank of $(H_\gamma - Q)$ is at most $2$, by interlacing theorem, we have 
\be
|\tr S-\tr R|\leq C\eta^{-1}
\ee

Together with \eqref{tby4} and \eqref{esmallfake}, with high probability,
\be\label{flld}
\quad \max_{\gamma} \{|x^S|+|x^R|\}\leq N^{C\e}.
\ee
From \eqref{Gij estimate} and \eqref{8800}, we find that
\be
\max_{\gamma}\left( |R_{ij}|+|S_{ij}|\right)\leq N^{-\phi+C\e}+C\delta_{ij}
\ee
with high probability. We also have the trivial bounds
\begin{equation}
x^S = \eta^2 \sum_{i, j} S_{ij} \overline{S_{ij}} \leq \eta^2 N^2 \eta^{-2} = N^2, \quad x^R \leq N^2, \quad 
|S|, |R|\leq \eta^{-1}\le N.
\end{equation}
Since the bad event is so small in probability space, in this proof, we basically ignore the bad set. Using the definitions we used in \eqref{eq:ordering} - \eqref{eq:green_fns}, we get a telescopic series
\begin{equation} \label{eq:telescopic_fn}
\E \: F \left( \eta^2 \sum_{i, j} G _{ij} \overline{G_{ij}}  \right) - \E \: F \left( \eta^2 \sum_{i, j} \wt G _{ij} \overline{\wt G_{ji}} \right) 
= \sum_{\gamma=1}^{\gamma_{\max}} \left[ \E \: F \left( x^{S} \right) - \E \: F \left( x^{T} \right) \right].
\end{equation}
From the Taylor expansion, we have
\begin{equation} \label{eq:taylor}
F(x^S) - F(x^R) = \sum_{s=1}^2 \frac{1}{s!} F^{(s)}(x^R) (x^S - x^R)^s  + \frac{1}{3!} F^{(3)}(\zeta_S) (x^S - x^R)^3,
\end{equation}
where $\zeta_S$ lies between $x^S$ and $x^R$, and we can obtain a similar formula for $F(x^T) - F(x^R)$ with $\zeta_T$ in place of $\zeta_S$.  

We now expand the term $S_{ij} \overline{S_{ij}}$ using \eqref{hxxt}, where the terms with the complex conjugate are treated in the same manner. Letting $\xi=3/\phi$ with $s=2$ in \eqref{hxxt}, we can see that
\be S_{ij} \overline{S_{ij}} 
=R_{ij} \overline{R_{ij}} +
 \sum _{1\leq k\leq 3/\phi}\left( \sum_{{\bf k}\in    I^2_{3/\phi, k}}
\mathcal P_{\gamma, {\bf k}}(R_{ij} \overline{R_{ij}})  \right)(-h_{ab})^k+
O(  C N^{-3})
\ee
holds with high probability. Averaging over $i, j$ and multiplying $\eta^2$ to both sides, we obtain  
\be\label{yrsh}
 x^S = x^R +
 \sum _{1\leq k\leq 3/\phi}\left( \sum_{{\bf k}\in    I^{2 }_{3/\phi, k}}
\eta^2 \sum_{i,j} \mathcal P_{\gamma, {\bf k}}\, (R_{ij} \overline{R_{ij}})   \right)(-h_{ab})^k+
O(  C N^{-3}).
\ee
 
Now, we claim that for any fixed $\bf k\neq 0$, ${\bf k}\in I^{2 }_{3/\phi, k}$, and $p=O(1)$ with $p\in 2\mathbb Z$,
\be\label{pqbh0}
\E\left| \sum _{i,j}\mathcal P_{\gamma, {\bf k}}\, (R_{i j} \overline{R_{ij}})\right |^p\leq  (N^{1+C\e})^p.
\ee
Assuming the claim \eqref{pqbh0}, with Markov inequality, we find that, for any ${\bf k}\in    I^{2 }_{3/\phi, k}$ and $\bf k\neq 0$,
\be\label{785}
| \mathcal P_{\gamma, {\bf k}}\, x^R  |:=|\eta^2\sum _{i,j} \mathcal P_{\gamma, {\bf k}}\, (R_{i j} \overline{R_{ij}})  |\leq N^{-1/3+C\e}
\ee
holds with probability with $1-N^{-D}$ for any $D>0$. 

For simplicity, we show the proof for 
\be\label{pqbh}
\left| \E\left( \sum _{i,j}\mathcal P_{\gamma, {\bf k}}\, (R_{i j} \overline{R_{ij}}) \right)^p \right| \leq  (N^{1+C\e})^p.
\ee
(The claim \eqref{pqbh0} can be proved similarly.) Using \eqref{eq:S_exp4} with $s=2p$ and  $\zeta=p$, we get
  \begin{align} 
\E  \prod_{t=1}^p \mathcal P_{\gamma, {\bf k }}\, (R_{i_t j_t}R_{j_ti_t}) 
=& \E \prod_{t=1}^p \mathcal P_{\gamma, {\bf k }}\, (S_{i_t j_t}S_{j_ti_t})\\\nonumber
 &- \sum_{\al_1, \al_2, \ldots, \al_{2p}\ge 0}^{1\le |\al|\leq 2p/\phi }
  \mathcal A_{\al }\, \E\,\mathcal P_{\gamma,\al}\left(\prod_{t=1}^p \mathcal P_{\gamma, {\bf k }}\, (S_{i_t j_t} \overline{S_{i_t j_t}})\right)  + O(N^{-p  })
\end{align}
With \eqref{boundcalA2}, in order to show \eqref{pqbh}, it only remains to prove that 
\be\label{pqbh2}
\left| \sum_{i_1,j_1\cdots, i_p,j_p}\E\,  \left(\prod_{t=1}^p \mathcal P_{\gamma, {\bf k }}\, (S_{i_t j_t} \overline{S_{i_t j_t}} )\right) \right| \leq  (N^{1+C\e})^p 
\ee
and for $\al\in \mathbb R^{2p}$, $1\le |\al|\leq 2p/\phi$, 
\be \label{pqbh3}
\left| \sum_{i_1,j_1\cdots, i_p,j_p}\E\,\mathcal P_{\gamma,\al}\left(\prod_{t=1}^p \mathcal P_{\gamma, {\bf k }}\, (S_{i_t j_t} \overline{S_{i_t j_t}} )\right) \right| \leq (N^{1+C\e})^p.
\ee
We give the proof for \eqref{pqbh2}. The proof of \eqref{pqbh3} is the same except that it is slightly longer by one term of $\mathcal P_{\gamma,\al}$. Using \eqref{HHtt3}, with 
$$
f(I,J)=N^{-2p},\quad I=(i_1, \cdots, i_p), \quad J=(j_1, \cdots, j_p),
$$ 
and 
 $ \prod_{t=1}^p \mathcal P_{\gamma, {\bf k }}\, (S_{i_t j_t} \overline{S_{i_t j_t}})$ playing the role of $\prod_{t=1}^s G^{\gamma_{\rm max}} _{i_t j_t}$ in \eqref{HHtt3},  $\wt G$ being  $G^0$,  we have
\be \begin{split} \label{780k}
&\left|\E\sum_{I,J} N^{-2p}  \prod_{t=1}^p \mathcal P_{\gamma, {\bf k }}\, (S_{i_t j_t} \overline{S_{i_t j_t}})\right| \\
&\leq  \left|\E \sum_{I,J} N^{-2p} \prod_{t=1}^p \mathcal P_{\gamma, {\bf k }}\, (\wt G_{i_t j_t} \overline{\wt G_{i_t j_t}})\right|\\
& \quad +\max_{ {\bf k}, n, \gamma } (N^{-\phi/20})^{\sum_i |{\bf k_i}|} 
\left|\E \sum_{I,J} N^{-2p} \mathcal P_{\gamma_n, {\bf k}_n}\cdots \mathcal P_{\gamma_1, {\bf k}_1}  \left(\mathcal P_{\gamma, {\bf k }}\, (\wt G_{i_t j_t} \overline{\wt G_{i_t j_t}})\right)\right|+O(N^{-\zeta})
\end{split} \ee
where 
\be
{\bf k}_1\in \mathbb R^{2p}, \quad {\bf k}_2\in \mathbb R^{2p+|{\bf k}_1|}, \quad 
{\bf k}_3\in \mathbb R^{2p+|{\bf k}_1|+|{\bf k}_2|},\quad \cdots.
\ee
From \eqref{Gij estimate} and assumption on $z$ in \eqref{qglm}, with high probability, 
\be\label{781s}
|\wt G_{ij}|\leq N^{-1/3+2\e}+2\delta_{ij}.
\ee

Now, we estimate the term
\be\label{782s}
  \mathcal P_{\gamma_n, {\bf k}_n}\cdots \mathcal P_{\gamma_1, {\bf k}_1}  \left(\mathcal P_{\gamma, {\bf k }}\, (\wt G_{i_t j_t} \overline{\wt G_{i_t j_t}} )\right)
\ee
as in \eqref{dsb}. First, it is the sum of at most $C^{\sum |{\bf k}_i|+|\bf k|}=O(1)$ products of $\wt G_{ij}$ (possibly $i = j$), where in each product the number of $\wt G_{ij}$ is $\sum |{\bf k}_i|+2p { \,=\,} O(1)$. Since $\wt G$ satisfies a rough bound $|\wt G_{ij} (z)|\le \eta^{-1}\leq N$, we know \eqref{782s} is always bounded by $N^{O(1)}$. Since \eqref{781s} holds with high probability, when estimating \eqref{782s}, we may neglect the event that \eqref{781s} does not hold. For each product of above type and for any fixed $t$, the indices $i_t$ and  $j_t$ only appear twice each. Since ${\bf k}\neq 0$, they cannot attain the form $\wt G_{i_t, j_t} \overline{\wt G_{i_t, j_t}}$. Thus, they must appear as one of the following forms for some $a, b, c, d$, which comes from $\mathcal P$'s:
\be\label{Gabcd}
\wt G_{i_t a} \wt G_{b j_t} \overline{\wt G_{i_t j_t}}, \quad \wt G_{i_t j_t} \overline{\wt G_{i_t a}} \overline{\wt G_{b j_t}}, \quad G_{i_t a} G_{b j_t} \overline{\wt G_{i_t c}} \overline{\wt G_{d j_t}}.
\ee
For each case, after averaging over $1 \leq i_t, j_t \leq N$, i.e., applying $N^{-2} \sum_{i_t, j_t}$, these terms are bounded by $N^{-1+C\e}$. Thus, so far we have proved that, for each $t$, the term $G_{ij}$ with an index $i_t$ or $j_t$ contributes a factor $N^{-1+C\e}$ to 
\be\label{784d}
 \sum_{I,J} N^{-2p} \mathcal P_{\gamma_n, {\bf k}_n}\cdots \mathcal P_{\gamma_1, {\bf k}_1}  \left(\mathcal P_{\gamma, {\bf k }}\, (\wt G_{i_t j_t} \overline{\wt G_{i_t j_t}}) \right).
\ee
Similarly, the $G_{ij}$'s with indices $i_1, j_1, \cdots, i_p, j_p$ contribute a factor $(N^{-1+C\e})^p$ to \eqref{784d}. By \eqref{asXY}, it is bounded with $X+CN^{-1}$. For the other $G$'s  without indices $i_1, j_1, \cdots, i_p, j_p$, we simply bound them by a constant $C$. Therefore, we obtain that \eqref{784d} is bounded by $(N^{-1+C\e})^p$ with high probability. Then, the expectation of \eqref{784d} is less than $(N^{-1+C\e})^p$. Analogously, we can bound the first term in the right hand side of \eqref{780k} by $(N^{-1+C\e})^p$. Thus, we proved \eqref{pqbh2} and \eqref{pqbh3}, which implies \eqref{pqbh} with \eqref{boundcalA2}. We can complete the proof of \eqref{pqbh0} similarly.  

Now, we return to estimate $x^S-x^R$ in \eqref{yrsh}. First, we note that 
$$
\E |h_{ab}|^3\leq (\E|h_{ab}|^2\E|h_{ab}|^4)^{1/2}\le (\log N)N^{-3/2}.
$$
With \eqref{785}, we can see that there exists a constant $C$ such that
\begin{equation}
\E  |x^S - x^R|^3 \leq N^{  -5/2+C\e}
\end{equation}
for any sufficiently large $N$ independent of $\gamma$. Together with the fact that $\zeta_S$ is between $x^S$ and $x^R$, we get $|\zeta_S|\leq N^{C\e}$ (see \eqref{flld}) with high probability, hence
\begin{equation}
\left| \sum_{\gamma =1}^{\gamma_{\max}} \E \left[ F^{(3)}(\zeta_S) (x^S - x^R)^3 \right] \right| \leq N^{  - \phi+C\e},
\end{equation}
where we have used \eqref{jhw2} on $F$. We can estimate $\E \left[ F^{(3)}(\zeta_T) (x^T - x^R)^3 \right]$ analogously.

From \eqref{eq:telescopic_fn} and \eqref{eq:taylor}, it only remains to prove that, for $1\leq s\le 2$, 
\begin{equation}\label{788y}
\left| \E \left[ F^{(s)}(x_R) (x^S - x^R)^s\right] -\E \left[ F^{(s)}(x_R) (x^T - x^R)^s\right] \right| \leq N^{  -2- \phi+C\e}.
\end{equation}
Using \eqref{yrsh} again, recalling $(H_{\gamma-1})_{ab}$ has the distribution of $\wt H_{ab}$, we have 
\be\label{yrsh2}
 x^T  = x^R +
 \sum _{1\leq k\leq 3/\phi}\left( \sum_{{\bf k}\in    I^{2 }_{3/\phi, k}}
\eta^2\sum _{ij}\mathcal P_{\gamma, {\bf k}}\, (R_{i j}R_{ji})   \right)(-\wt h_{ab})^k+
O(  C N^{-3})
 \ee
with $\E h^k_{ab}=\E \wt h^k_{ab}$, $(1\leq k\leq 4)$. Thus, we obtain that
 \begin{align}
&\left|  \E \left[ F^{(s)}(x_R) (x^S - x^R)^s\right] -\E \left[ F^{(s)}(x_R) (x^T - x^R)^s\right] \right|
\\\nonumber
\leq & \left| \sum _{ k=5}^{9/\phi} \;
\sum_{\sum_{t=1}^s|{\bf k_t}|=k}
 \sum_{{\bf k}_t\in    I^{2s}_{3/\phi, k}}
\E \prod_{t=1}^s\left( 
\mathcal P_{\gamma, {\bf k}_t}\;x^R  \right)\right|  \left(\left|\E(-h_{ab})^k\right|+\left|\E(-\wt h_{ab})^k\right|\right)+
O(  C N^{-3}),
\end{align}
where in the right hand side we sum up $k$ from $5$. Since $\left|\E(-\wt h_{ab})^k\right| \leq (\log N)^CN^{-5/2}$ and $ \left|\E(-  h_{ab})^k\right| \leq (\log N)^CN^{-2-\phi}$ , using \eqref{785}, we obtain \eqref{788y} and complete the proof. 
\end{proof}

\section*{Acknowledgment} 
The authors would like to thank H.-T. Yau for helpful discussions.

\end{document}